\documentclass[11pt]{amsart}
\usepackage{enumitem}
\usepackage{amsthm}
\usepackage[paperheight=11in,paperwidth=8.5in,left=1in,top=1.25in,right=1in,bottom=1.25in,]{geometry}
\linespread{1.125}
\setcounter{MaxMatrixCols}{20}

 \usepackage{relsize}
\usepackage{rotating}
\usepackage[table]{xcolor}

\usepackage[super]{nth}
\usepackage[none]{hyphenat}
\usepackage{hyperref}
\hypersetup{
	colorlinks,
	citecolor=black,
	filecolor=black,
	linkcolor=black,
	urlcolor=black
}
\usepackage[capitalize, nameinlink, noabbrev, nosort]{cleveref}
\usepackage{graphicx, amssymb}
\usepackage{epstopdf} 
\usepackage{youngtab}
\usepackage[boxsize=.35 em]{ytableau}
\usepackage{comment}
\usepackage{setspace}
\usepackage{multicol}

\usepackage{tikz}   
\usetikzlibrary{arrows,calc,intersections,matrix,positioning,through}
\usepackage{tikz-cd}
\usepackage{comment}
\usepackage{diagbox}
\usepackage{blkarray}
\tikzset{commutative diagrams/.cd,every label/.append style = {font = \normalsize}}

\usepgflibrary{shapes.geometric}
\usetikzlibrary{shapes.geometric}
\usetikzlibrary{backgrounds}

\DeclareMathOperator{\rtop}{{\it k}} 

\DeclareMathOperator{\rt}{{\it k}} 
\DeclareMathOperator{\Gr}{Gr}
\DeclareMathOperator{\Mat}{Mat}

\DeclareMathOperator{\tw}{tw}

\DeclareMathOperator{\AFacet}{Froz}
\DeclareMathOperator{\Froz}{Froz}
\DeclareMathOperator{\refl}{refl}
\DeclareMathOperator{\cyc}{cyc}

\DeclareMathOperator{\pre}{pre}
\DeclareMathOperator{\inc}{inc}
\DeclareMathOperator{\Irr}{\xx}

\DeclareMathOperator{\triv}{triv}

\def\CD{\mathcal{CD}_{n,k}}


\newcommand{\Grk}{\Gr_{k,n}^{\scriptscriptstyle \ge 0}}

\def\AA{\mathcal{A}_{n,k,m}(Z)}
\newcommand{\Ank}{\mathcal{A}_{n, k, 4}({Z})}

\def\tZ{\tilde{Z}}
\def\bcfw{\bowtie}

\def\4biddenprop{4-coindependent}
\newcommand{\lr}[1]{\langle #1 \rangle}
\newcommand{\llrr}[1]{\langle\!\langle #1 \rangle\!\rangle}

\newcommand{\br}{\,|\,}
\newcommand{\gt}[1]{Z_{#1}}
\newcommand{\gto}[1]{Z_{#1}^\circ}
\def\ctop{D_{\rtop}}
\def\rchn{\nearrow}


\newcommand{\rzeta}{\bar{\zeta}}
\newcommand{\ralpha}{\bar{\alpha}}
\newcommand{\rbeta}{\bar{\beta}}
\newcommand{\rgamma}{\bar{\gamma}}
\newcommand{\rdelta}{\bar{\delta}}
\newcommand{\repsilon}{\bar{\varepsilon}}
\DeclareMathOperator{\Mut}{Mut}


\newcommand{\czeta}{{\zeta}}
\newcommand{\calpha}{{\alpha}}
\newcommand{\cbeta}{{\beta}}
\newcommand{\cgamma}{{\gamma}}
\newcommand{\cdelta}{{\delta}}
\newcommand{\cepsilon}{{\varepsilon}}

\newcommand{\szeta}{\hat{\zeta}}
\newcommand{\salpha}{\hat{\alpha}}
\newcommand{\sbeta}{\hat{\beta}}
\newcommand{\sgamma}{\hat{\gamma}}
\newcommand{\sdelta}{\hat{\delta}}
\newcommand{\sepsilon}{\hat{\varepsilon}}

\newcommand{\tzeta}{{\check{\zeta}}}

\newcommand{\tgamma}{{\check{\gamma}}}

\newcommand{\tepsilon}{{\check{\varepsilon}}}

\def\hxx{\hat{\xx}}
\def\hhxx{{\check{\xx}}}
\def\hhQ{\check{Q}}

\newcommand{\rPsi}{\overline{\Psi}}

\newcommand{\chR}{\Gr_{1,5}^{\scriptscriptstyle >0}}
\newcommand{\ZZ}{{L}}
\newcommand{\YY}{{M}}

\newcommand{\PP}{\mathbb{M}}

\newcommand{\C}{\mathbb{C}}

\newcommand{\CCC}{\mathcal{C}}
\newcommand{\A}{\mathcal{A}}

\def\mtx{M}
\def\twmt{M^{\tw}}

\def\O{\mathcal{O}}

\def\Acal{\mathcal{A}}

\def\R{\mathbb{R}}

\def\NN{\mathbb{N}}

\def\Vcal{\mathcal{V}}

\newcommand{\xx}{\mathbf{x}}
\newcommand{\txx}{\widetilde{\mathbf{x}}}

\def\rcp{\mathfrak{r}}
\def\rcpp{\mathfrak{p}}
\def\st{\text{FStep}}



\newtheorem*{theorem*}{Theorem}
\newtheorem{theorem}{Theorem}[section]
\newtheorem{lemma}[theorem]{Lemma}
\newtheorem{proposition}[theorem]{Proposition}
\newtheorem{corollary}[theorem]{Corollary}
\newtheorem{conjecture}[theorem]{Conjecture}
\theoremstyle{definition}
\newtheorem{definition}[theorem]{Definition}
\newtheorem{example}[theorem]{Example}

\newtheorem{remark}[theorem]{Remark}
\newtheorem{notation}[theorem]{Notation}
\newtheorem{claim}[theorem]{Claim}
\newtheorem{heuristic}[theorem]{Heuristic}

\setlist[itemize]{leftmargin=*}
\setlist[enumerate]{leftmargin=*}

\begin{document}
\begin{abstract}  The amplituhedron is a mathematical object which was introduced to provide a geometric origin of scattering amplitudes in $\mathcal{N}=4$ super Yang Mills theory.
It generalizes \emph{cyclic polytopes} and the \emph{positive Grassmannian}, and has a very rich combinatorics with 
connections to cluster algebras.
In this article we provide a series of results about tiles and tilings of the $m=4$ amplituhedron. Firstly, we provide a full characterization of facets of BCFW tiles in terms of cluster variables for $\mbox{Gr}_{4,n}$.
Secondly, we exhibit a tiling of the $m=4$ amplituhedron which involves a tile which does not come from the BCFW recurrence -- the \emph{spurion} tile, which also satisfies all cluster properties.
Finally, strengthening the connection with cluster algebras, 
we show that each standard BCFW tile is the positive part of a cluster variety, which allows us to compute the canonical form of each such tile explicitly in terms of cluster variables for $\mbox{Gr}_{4,n}$. This paper is a companion to our previous paper ``Cluster algebras and tilings for the 
$m=4$ amplituhedron.''
\end{abstract}
	
	\title{A cluster of results on amplituhedron tiles}
	\author[C. Even-Zohar]{Chaim Even-Zohar}
        \address{Faculty of Mathematics, Technion, Haifa, Israel}
	\email{chaime@technion.ac.il}
	\author[T. Lakrec]{Tsviqa Lakrec}
	\address{Institute of Mathematics, University of Zurich, Switzerland}
	\email{tsviqa@gmail.com}
	\author[M. Parisi]{Matteo Parisi}
	\address{CMSA, Harvard University, Cambridge, MA; Institute for Advanced Study, Princeton, NJ;}
 \email{mparisi@cmsa.fas.harvard.edu}
	
 \author[M. Sherman-Bennett]{Melissa Sherman-Bennett}
	\address{Department of Mathematics, MIT, Cambridge, MA}
	\email{msherben@mit.edu}
	\author[R. Tessler]{Ran Tessler}
	\address{Department of Mathematics, Weizmann Institute of Science, Israel}
	\email{ran.tessler@weizmann.ac.il}
	\author[L. Williams]{Lauren Williams}
	\address{Department of Mathematics, Harvard University, Cambridge, MA}
	\email{williams@math.harvard.edu}
	\maketitle
	
	\setcounter{tocdepth}{1}
	\tableofcontents

\section{Introduction}

The amplituhedron is a geometric object which was introduced
in the context of scattering amplitudes in $\mathcal{N}=4$ super Yang Mills theory.
In particular, the fact that the \emph{BCFW recurrence}\footnote{BCFW refers to Britto, Cachazo, Feng, and Witten} computes scattering amplitudes in $\mathcal{N}=4$ super Yang Mills theory
is a reflection of the geometric statement (which we proved in \cite{even2023cluster}) that each BCFW collection of cells in the positive Grassmannian
gives rise to a \emph{tiling} of the $m=4$ amplituhedron.  The $m=4$ amplituhedron also has a close connection to \emph{cluster algebras}:
we proved in \cite{even2023cluster} that each BCFW tile satisfies the \emph{cluster adjacency conjecture}, that is, its facets are cut out by compatible
cluster variables.

In this paper, which is 
a companion paper to \cite{even2023cluster}, we continue our study of the cluster structure and tilings of the $m=4$ amplituhedron. 
In particular, we provide 
a full characterization of the facets of BCFW tiles in terms of cluster variables for $\mbox{Gr}_{4,n}$.
For \emph{standard} BCFW tiles we prove our characterization of facets, see \cref{prop:standard_facets}, extending results of \cite{even2021amplituhedron}. For \emph{general} BCFW cells we state a characterization of facets in \cref{claim:facetsgenBCFW} but omit the proof, which uses the same ideas as the proof of \cref{prop:standard_facets} but is more technical.

While there are many tilings of the amplituhedron which use BCFW tiles, 
we show that there are also tilings that involve other tiles.
In particular, we exhibit the first known tiling of an 
amplituhedron which uses a non-BCFW tile, the \emph{spurion} tile.

Finally, strengthening the connection with cluster algebras, 
we show that each standard BCFW tile is the positive part of a cluster variety, see \cref{prop:birat-to-torus}.  In \cref{sec:canonical} we then use our description of BCFW tiles in terms of cluster
variables for $\mbox{Gr}_{4,n}$ in order to compute the canonical form of each such tile. 
The results of this paper provide computational tools to study BCFW tiles, their cluster structures, canonical forms and tilings.

The structure of this paper is as follows.  
In \cref{background1} and \cref{background2} we provide background on the amplituhedron and cluster algebras.
In \cref{sec:facets} we characterize the facets of BCFW tiles in terms of cluster variables for $\mbox{Gr}_{4,n}$.
In \cref{sec:spurion} we discuss the spurion tiling of the amplituhedron.
In \cref{sec:pospart} we show that each standard BCFW tile can be thought of as the positive part of a cluster variety.
Finally in \cref{sec:canonical} we explain how to compute the canonical form of a BCFW tile from the cluster variables.

\noindent{\bf Acknowledgements:~} 
The authors would like to thank Nima Arkani-Hamed for many
inspiring conversations.
TL is supported by SNSF grant Dynamical Systems, grant no.~188535.
MP is supported by the CMSA at Harvard University and at the Institute for Advanced Study by the U.S. Department of Energy under the grant number DE-SC0009988. 
MSB is supported by the National Science Foundation under Award No.~DMS-2103282.
RT (incumbent of the Lillian and George Lyttle Career Development Chair) was supported by the ISF grant No.~335/19 and 1729/23.
RT would like to thank Yoel Groman for discussions related to this work.
LW is supported by the National Science Foundation under Award No. 
DMS-1854316 and DMS-2152991. Any opinions, findings, and conclusions or recommendations expressed in this material are
those of the author(s) and do not necessarily reflect the views of the National Science
Foundation.  The authors would also like to thank 
Harvard University, 
the Institute for Advanced Study, 
and the `Research in Paris' program at the 
Institut Henri Poincar\'e, 
where some of this 
work was carried out.

\section{Background: the amplituhedron and BCFW tiles}\label{background1}
\subsection{The positive Grassmannian}

The \emph{Grassmannian} $\mbox{Gr}_{k,n}(\mathbb{F})$
is the space of all $k$-dimensional subspaces of
an $n$-dimensional vector space $\mathbb{F}^n$.
Let $[n]$ denote $\{1,\dots,n\}$, and $\binom{[n]}{k}$ denote the set of all $k$-element 
subsets of $[n]$.
We can
 represent a point $V \in
\mbox{Gr}_{k,n}(\mathbb{F})$  as the row-span 
of
a full-rank $k\times n$ matrix $C$ with entries in
$\mathbb{F}$.  
Then for $I=\{i_1 < \dots < i_k\} \in \binom{[n]}{k}$, we let $\lr{I}_V=\lr{i_1\,i_2\,\dots\,i_k}_V$ be the $k\times k$ minor of $C$ using the columns $I$. 
The $\lr{I}_V$ 
are called the {\itshape Pl\"{u}cker coordinates} of $V$, and are independent of the choice of matrix
representative $C$ (up to common rescaling). The \emph{Pl\"ucker embedding} 
$V \mapsto \{\lr{I}_V\}_{I\in \binom{[n]}{k}}$
embeds  $\mbox{Gr}_{k,n}(\mathbb{F})$ into
projective space\footnote{
We will sometimes abuse notation and identify $C$ with its row-span;
we will also drop the subscript $V$ on Pl\"ucker coordinates when it does not cause confusion.} . 
If $C$ has columns $v_1, \dots, v_n$, we may also identify $\lr{i_1\,i_2\,\dots\,i_k}$ with $v_{i_1} \wedge v_{i_2} \wedge \dots \wedge v_{i_k}$, hence e.g. $\lr{i_1\,i_2\,\dots\,i_k}=- \lr{i_2\,i_1\,\dots\,i_k}$. 
In this paper we will often be working with the 
\emph{real} Grassmannian $\mbox{Gr}_{k,n}=
\mbox{Gr}_{k,n}(\mathbb{R})$.
We will also denote by $\mbox{Gr}_{k,N}$ the Grassmannians of $k$-planes in a vector space with basis indexed by a set $N\subset [n]$.

\begin{definition}[Positive Grassmannian]\label{def:positroid}\cite{lusztig, postnikov}
We say that $V\in \Gr_{k,n}$ is \emph{totally nonnegative}
     if (up to a global change of sign)
       $\lr{I}_V \geq 0$ for all $I \in \binom{[n]}{k}$.
Similarly, $V$ is \emph{totally positive} if $\lr{I}_V >0$ for all $I
      \in \binom{[n]}{k}$.
We let $\Grk$ and $\Gr_{k,n}^{>0}$ denote the set of
totally nonnegative and totally positive elements of $\Gr_{k,n}$, respectively.  
$\Grk$ is called the \emph{totally nonnegative}  \emph{Grassmannian}, or
       sometimes just the \emph{positive Grassmannian}.
\end{definition}

If we partition $\Grk$ into strata based on which Pl\"ucker coordinates are strictly
positive and which are $0$, we obtain a cell decomposition of $\Grk$
into \emph{positroid cells} \cite{postnikov}.
Each positroid cell $S$ gives rise to a matroid $\mathcal{M}$, whose bases are precisely
the $k$-element subsets $I$ such that the Pl\"ucker coordinate
$\lr{I}$ does not vanish on $S$; $\mathcal{M}$ 
is called a \emph{positroid}.

One can index positroid cells in $\Grk$  by (equivalence classes of) \emph{plabic graphs} \cite{postnikov}.

\begin{definition}
	Let $G$ be a \emph{plabic graph}, i.e. a planar bipartite graph\footnote{We will always assume that plabic graphs are \emph{reduced} \cite[Definition 12.5]{postnikov}.} embedded in a disk, with black vertices $1, 2, \dots, n$ on the boundary of the disk. An \emph{almost perfect matching} $M$ of $G$ is a collection of edges which covers each internal vertex of $G$ exactly once. The \emph{boundary} of $M$, denoted $\partial M$, is the set of boundary vertices covered by $M$. The positroid associated to $G$ is the collection $\mathcal{M}=\mathcal{M}(G):=\{\partial M: M \text{ an almost perfect matching of }G \}$.
\end{definition}

For more details about plabic graphs relevant for this paper, see e.g. \cite[Appendix A]{even2023cluster}.

Both $\Gr_{k,n}$ and $\Grk$ admit the following set of operations, 
which will be useful to us.

\begin{definition}[Operations on the Grassmannian]\label{def:opGr} We define the following maps on $\Mat_{k,n}$, which descends to maps on  $\Gr_{k,n}$ and $\Grk$, which we denote in the same way:
\begin{itemize}
    \item (cyclic shift) We define the \emph{cyclic shift}
 as the map $\cyc: \Mat_{k, n} \to \Mat_{k,n}$ which sends  $v_1 \mapsto (-1)^{k-1}v_{n}$ and $v_i \mapsto v_{i-1}, 2 \leq i \leq n$, and in terms of Pl\"ucker coordinates: $\lr{I} \mapsto \lr{I-1}$.
 \item  (reflection) We define \emph{reflection} as the map $\refl:\Mat_{k, n} \to \Mat_{k,n}$ which sends $v_i \mapsto v_{n+1-i}$ and rescales a row by $(-1)^{{k\choose 2}}$,  and in terms of Pl\"ucker coordinates: $\lr{I} \mapsto \lr{n+1-I}$.
 \item (zero column) For $J\subseteq [n]$, we define the map $\pre_J:\Mat_{k, [n] \setminus \{i\}} \to \Mat_{k,n}$ which adds  zero columns in positions $J$, and in terms of Pl\"ucker coordinates: $\lr{I} \mapsto \lr{I}$.
\end{itemize}
Here, $I-1$ is obtained from $I \in {[n] \choose k}$ by subtracting $1$ (mod $n$) from each element of $I$ and $n+1-I$ is obtained from $I$ by subtracting each element of $I$ from $n+1$.
\end{definition}

\subsection{The amplituhedron}

Building on \cite{abcgpt,hodges},
Arkani-Hamed and Trnka
\cite{arkani-hamed_trnka}
introduced
the \emph{(tree) amplituhedron}, which they defined as 
the image of the positive Grassmannian under a positive linear map.
Let $\Mat_{n,p}^{>0}$ denote the set of $n\times p$ matrices whose maximal minors
are positive.

\begin{definition}[Amplituhedron]\label{defn_amplituhedron}
Let $Z\in \Mat_{n,k+m}^{>0}$, where $k+m \leq n$. 
    The \emph{amplituhedron map}
$\tilde{Z}:\Gr_{k,n}^{\ge 0} \to \Gr_{k,k+m}$
        is defined by
        $\tilde{Z}(C):=CZ$,
    where
 $C$ is a $k \times n$ matrix representing an element of
        $\Gr_{k,n}^{\ge 0}$,  and  $CZ$ is a $k \times (k+m)$ matrix representing an
         element of $\Gr_{k,k+m}$.
        The \emph{amplituhedron} $\mathcal{A}_{n,k,m}(Z) \subset \Gr_{k,k+m}$ is the image
$\tilde{Z}(\Gr_{k,n}^{\ge 0})$.
\end{definition}

In this article we will be concerned with the case where $m=4$. 

\begin{definition}[Tiles]
\label{defn_tile}
	Fix $k, n, m$ with $k+m \leq n$ and choose
	$Z\in \Mat_{n,k+m}^{>0}$.  
	Given a positroid cell $S$ of 
	$\Gr_{k,n}^{\ge 0}$, we let 
	$\gto{S} := \tilde{Z}(S)$ and 
	$\gt{S}: = \overline{
		\tilde{Z}(S)} = \tilde{Z}(\overline{S})$.
	We call
	$\gt{S}$  and $\gto{S}$
	a \emph{tile}  and an \emph{open tile}
	for $\AA$ 
	if $\dim(S) =km$ and 
	$\tilde{Z}$ is injective on $S$. 
\end{definition}

\begin{definition}[Tilings]\label{def:tiling}
	A \emph{tiling} 
                of $\AA$ is
                a collection
		$\{Z_{S} \ \vert \ S\in \mathcal{C}\}$
                 of tiles, such that their union equals $\AA$ and the open tiles $\gto{S},\gto{S'}$ are pairwise disjoint.
\end{definition}

There is a natural notion of  \emph{facet} of a tile, 
generalizing the notion of facet of a polytope. 

\begin{definition}[Facet of a cell and a tile]\label{def:facet2} 
Given two positroid cells $S'$ and $S$, we say that 
$S'$ is a \emph{facet} of $S$ if 
$S' \subset \partial{S}$ and $S'$ has codimension $1$ in $\overline{S}$.
If $S'$ is a facet of $S$ and $Z_S$ is a tile of $\AA$, we say that $Z_{S'}$ is a
	\emph{facet} 
 of $Z_{S}$ if 
         $\gt{S'} \subset \partial \gt{S}$ and has codimension 1 in $\gt{S}$.
\end{definition}

\begin{definition}[Twistor coordinates]\label{def:tw_coords}
Fix
  $Z \in \Mat_{n,k+m}^{>0}$
   with rows  $Z_1,\dots, Z_n \in \R^{k+m}$.
   Given  $Y \in \Gr_{k,k+m}$
   with rows $y_1,\dots,y_k$,
   and $\{i_1,\dots, i_m\} \subset [n]$,
   we define the \emph{twistor coordinate} $
	\llrr{{i_1} {i_2} \cdots {i_m}}$
        to be  the determinant of the
        matrix with rows
        $y_1, \dots,y_k, Z_{i_1}, \dots, Z_{i_m}$.
\end{definition}

Note that the twistor coordinates are defined only up to a common scalar multiple. An element of $\mbox{Gr}_{k, k+m}$ is uniquely determined by its twistor coordinates \cite{karpwilliams}. Moreover, $\mbox{Gr}_{k,k+m}$ can be embedded into $\mbox{Gr}_{m,n}$ so that the twistor coordinate $\llrr{i_1 \dots i_m}$ is the pullback of the Pl\"ucker coordinate $\lr{i_1, \dots, i_m}$ in $\mbox{Gr}_{m,n}$.
\begin{definition}\label{def:functionary}
We refer to a homogeneous polynomial in twistor coordinates as a \emph{functionary}. For $S \subseteq \Grk$, we say a functionary $F$ has a definite sign $s \in \{\pm 1\}$ (or vanishes) on $\gto{S}$ if for all $Z\in \Mat_{n,k+4}^{>0}$ and for all $Y \in \gto{S}$, $F(Y)$ has sign $s$ (or $0$, respectively).  A functionary is \emph{irreducible} if it is the pullback of an irreducible function on $\Gr_{m,n}$.
\end{definition}

We will use functionaries to describe amplituhedron tiles and to connect with cluster algebras.
\subsection{BCFW cells and BCFW tiles}
In this section we review the operation of \emph{BCFW product} used to build BCFW cells, following \cite[Section 5]{even2023cluster}.  We then define BCFW cells and  tiles.

\begin{notation}\label{not:LR_cluster}
Choose integers $1\leq a<b<c<d<n$ with $a,b$ and $c,d,n$ consecutive.
	Let\footnote{
Note that we will overload the notation and let $n$
	index an element of a vector space basis for different
	vector spaces; however, in what follows, the meaning should
	be clear from context.} $N_L = \{n,1,2,\dots,a,b\}, N_R = \{b, \dots, c, d, n\}$ and $B=(a,b,c,d,n)$\footnote{The `B' stands for ``butterfly.''}. Also fix $k \leq n$ and two nonnegative integers $k_L \leq |N_L|$ and $ k_R\leq |N_R|$ such that $k_L + k_R +1=k$. 
\end{notation}

\begin{remark}
While it is convenient to state our results in terms of $[n]$ and $\mbox{Gr}^{\geq 0}_{k,n}$, our results hold if we replace $[n]$ by 
 any set of indices $N \subset [n]$, and replace $1$ and $n$ by the smallest and largest elements of $N$, respectively.
\end{remark}

\begin{definition}[BCFW product]\label{def:butterfly}
Let $S_L \subseteq \mbox{Gr}^{\geq 0}_{k_L,N_L}, S_R \subseteq \mbox{Gr}^{\geq 0}_{k_R,N_R}$ be as in \cref{not:LR_cluster}, with  $G_L, G_R$ the respective plabic graphs, and let $B=(a,b,c,d,n)$ as in 
\cref{not:LR_cluster}.  The \emph{BCFW product} of $S_L$ and $S_R$ is the positroid cell $S_L \bcfw S_R \subseteq \mbox{Gr}^{\geq 0}_{k,n}$ corresponding to the plabic graph in the right-hand side of \Cref{fig:butterfly}.

\begin{figure}[h]
\centering
\includegraphics[width=\linewidth]{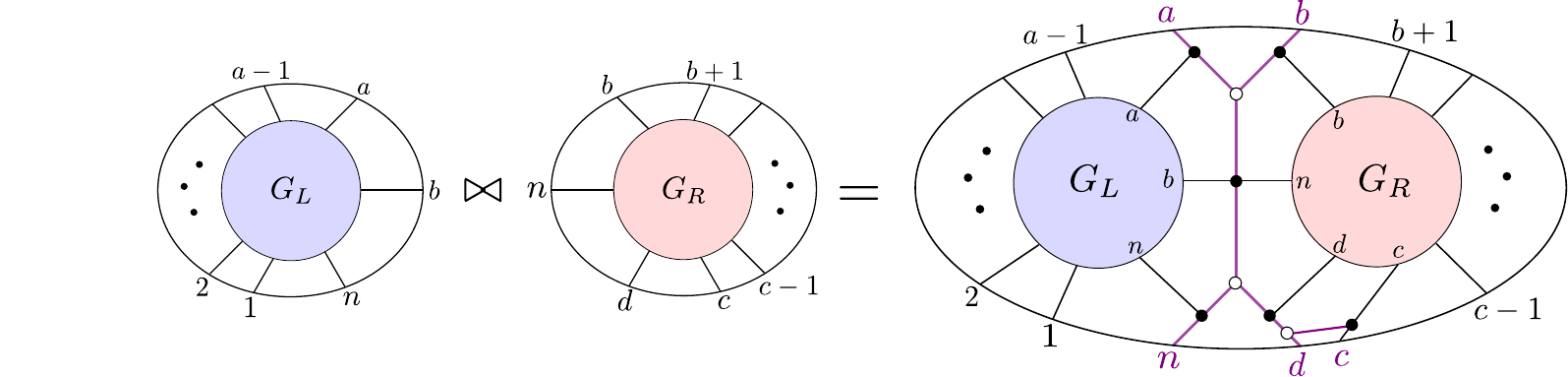}
\caption{The BCFW product $S_L \bcfw S_R$ of $S_L$ and $S_R$ in terms of their plabic graphs. 
Note that $G_L$ and $G_R$ are joined along the purple graph associated to 
$B=(a,b,c,d,n)$; we call it a `butterfly graph' since it resembles a butterfly.}
\label{fig:butterfly}
\end{figure}

\end{definition}
When it is not clear from the context, we will say $\bcfw$ is performed `with indices $B$'.

We now introduce the family of \emph{BCFW cells} to be the set of positroid cells which is closed under the operations in \Cref{def:opGr,def:butterfly}:

\begin{definition}[BCFW cells]\label{def:BCFW_cell}
The set of \emph{BCFW cells} is defined recursively. For $k=0$, let the trivial cell $\Gr^{\scriptscriptstyle>0}_{0,n}$ be a BCFW cell. This is represented by a plabic graph with black lollipops at each of the boundary vertices.  If $S$ is a BCFW cell, so is the cell obtained by applying $\mbox{cyc}, \mbox{refl}, \mbox{pre}$ to $S$. If $S_L,S_R$ are BCFW cells, so is their BCFW product $S_L \bcfw S_R$.
\end{definition}
\begin{remark}
It follows from the definition that the plabic graph of a BCFW cell is built by glueing together
a collection of (possibly rotated or reflected) `butterfly graphs.'  We could therefore refer to the plabic graph of a BCFW cell as a \emph{kaleidoscope}\footnote{A group of butterflies is officially
called a \emph{kaleidoscope}.}. 
\end{remark}

The \emph{standard} BCFW cells, 
which we define below, are 
a particularly nice subset of BCFW cells.
The images of the standard BCFW cells 
yield a tiling of the amplituhedron~\cite{even2021amplituhedron}. 

\begin{definition}[Standard BCFW cells]\label{def:std-bcfw-cells}
 The set of \emph{standard BCFW cells} is defined recursively. For $k=0$, let the trivial cell $\Gr^{\scriptscriptstyle>0}_{0,n}$ be a BCFW cell. If $S$ is a BCFW cell, so is the cell obtained by adding a zero column using $\mbox{pre}$ in the penultimate position. If $S_L,S_R$ are BCFW cells, so is their BCFW product $S_L \bcfw S_R$.
\end{definition}

\begin{figure} 
\includegraphics[width=0.6\textwidth]{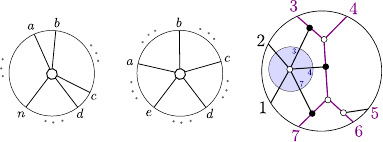} 
\caption{  The plabic graph of a standard BCFW cell (left) and general BCFW cell (center) in $\Gr^{\scriptscriptstyle\geq 0}_{1,n}$, where the $\ldots$ denote black lollipops in the remaining indices; the plabic graph of a BCFW cell $S_{ex} \subset \mbox{Gr}^{\geq 0}_{2,7}$ (right).}\label{fig:bcfwcellsk1}
\end{figure}

\begin{example}\label{ex:bcw_tile}
For $k=1$, each BCFW cell in $\Gr_{1,n}^{\geq 0}$ has a plabic graph of the form shown in \cref{fig:bcfwcellsk1} (middle). The Pl\"ucker coordinates $\lr{a},\lr{b},\lr{c},\lr{d},\lr{e}$ are positive, and all others are zero. 
    In \cref{fig:bcfwcellsk1} (right), $S_{ex} \subset \mbox{Gr}^{\geq 0}_{2,7}$ is obtained as $S_L \bowtie S_R$, with $S_L, S_R$ BCFW cells in $\mbox{Gr}^{\geq 0}_{1,N_L}, \mbox{Gr}^{\geq 0}_{0,N_R}$ respectively, with  $N_L=\{7,1,2,3,4\}, N_R=\{4,5,6,7\}$ and $B=(3,4,5,6,7)$. The standard BCFW cells for $k=1$ are  those BCFW cells where $a,b$ and $c,d$ are consecutive and $e=n$, as shown in 
    \cref{fig:bcfwcellsk1} (left). For $k=n-4$, the \emph{totally} positive Grassmannian $\Gr_{n-4,n}^{\scriptscriptstyle>0}$ is the only BCFW cell.
\end{example}

In \cite[Section 7]{even2023cluster} we showed that the 
amplituhedron map is injective on each BCFW cell.  We can therefore define \emph{BCFW tiles}.

\begin{definition}[BCFW tiles and standard BCFW tiles] \label{def:BCFWtile}
We define a \emph{BCFW tile} to be 
the (closure of the) image of a BCFW cell under the amplituhedron map.  In other words, each BCFW tile has the form 
$Z_{\rcp}:=\overline{\tilde{Z}(S_{\rcp})},$ where $\rcp$ is a recipe.  We define a \emph{standard BCFW tile} to be a BCFW tile that comes from a standard BCFW cell.  
\end{definition}

\subsection{Standard BCFW cells from chord diagrams}
\label{sec:quord}

In this section we introduce \emph{chord diagrams}, and show 
how each gives an algorithm for constructing  a standard BCFW cell.
In \cref{sec:recipes} we then give a  generalization of this algorithm, called a \emph{recipe}, for 
constructing a general BCFW cell.  

\begin{definition}[Chord diagram \cite{even2021amplituhedron}]\label{def:cd} 
Let $k,n \in \mathbb{N}$. A~\emph{chord diagram} $D \in\mathcal{CD}_{n,k}$ is a set of $k$~quadruples named \emph{chords}, of integers in the set $\{1,\dots,n\}$ named \emph{markers}, of the following form:
	$$ D \;=\; \{(a_1,b_1,c_1,d_1),\dots,(a_k,b_k,c_k,d_k)\} \;\;\text{ where }\;\;
	b_i=a_i+1 \text{ and }d_i=c_i+1$$ such that every 
	chord $D_i=(a_i,b_i,c_i,d_i) \in D$ satisfies
$ 1 \;\leq\; a_i \;<\; b_i \;<\; c_i \;<\; d_i \;\leq\; n-1 $
and \emph{no} two chords $D_i,D_j \in D$ satisfy
$ a_i \;=\; a_j$ or $a_i \;<\; a_j \;<\; c_i \;<\; c_j.$
\end{definition}

The number of different chord diagrams with $n$ markers and $k$ chords is the  
Narayana number ${N(n-3,k+1)}$:
$ \left|\mathcal{CD}_{n,k}\right| \;=\; \frac{1}{k+1}\binom{n-4}{k}\binom{n-3}{k}$.

\begin{figure}
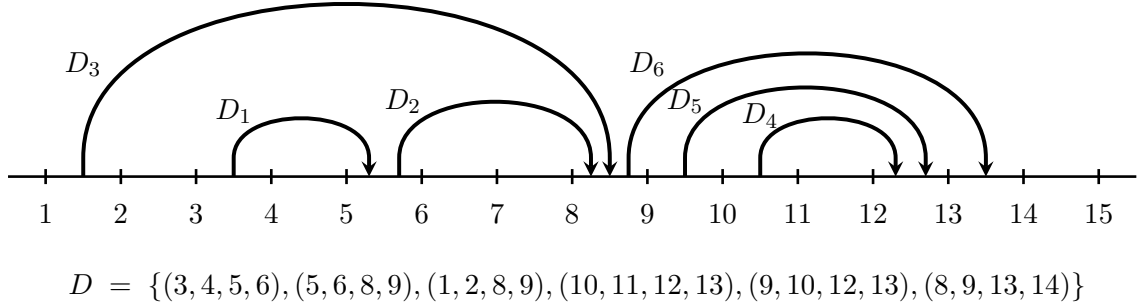

\begin{center}
\tikz[line width=1,scale=1]{
\draw (0.5,0) -- (15.5,0);
\foreach \i in {1,2,...,15}{
\def\x{\i}
\draw (\x,-0.1)--(\x,+0.1);
\node at (\x,-0.5) {\i};}
\foreach \i/\j in {1/8, 3/4.8, 5.2/7.75, 8.25/13, 9/12.2, 10/11.8}{
\def\x{\i+0.5}
\def\y{\j+0.5}
\draw[line width=1.5,-stealth] (\x,0) -- (\x,0.25) to[in=90,out=90] (\y,0.25) -- (\y,0);
}
\node at(1.5,1.5) {$D_3$};
\node at(3.5,0.875) {$D_1$};
\node at(5.75,1) {$D_2$};
\node at(9,1.5) {$D_6$};
\node at(9.5,1) {$D_5$};
\node at(10.5,0.8125) {$D_4$};
}
\vspace{0.5em}
$$ D \;=\; \{(3,4,5,6),(5,6,8,9),(1,2,8,9),(10,11,12,13),(9,10,12,13),(8,9,13,14)\} $$
\end{center}
\caption{
A chord diagram $D$ with $k=6$ chords $n=15$ markers.
} 
\label{cd-example}
\end{figure}

See Figure~\ref{cd-example}, where we visualize such a chord diagram $D$ in the plane as a horizontal line with $n$ markers labeled $\{1,\dots,n\}$ from left to right, and $k$ nonintersecting chords above it, whose \emph{start} and \emph{end} lie in the segments $(a_i,b_i)$ and $(c_i,d_i)$ respectively. The definition imposes restrictions on the chords: they cannot start before $1$, end after $n-1$, or start or end on a marker. Two chords cannot start in the same segment $(s,s+1)$, and one chord cannot start and end in the same segment, nor in adjacent segments. Two chord cannot cross.

We say that a chord is a \emph{top chord} if there is no chord above it, e.g. $D_3$ and $D_6$ in Figure~\ref{cd-example}. One natural way to label
the chords is by 
$D_1,\dots,D_k$ such that for all $1 \leq j \leq k$, $D_j$ is the rightmost top chord among the set of chords $\{D_1,\dots,D_j\}$ as in \cref{cd-example}. This is equivalent to sorting 
the chords according to their ends.

\begin{definition}[Terminology for chords]
\label{cd-terminology}
A~chord is a~\emph{top} chord 
if there is no chord above it,
 and otherwise it is a \emph{descendant} of the chords above it, called its \emph{ancestors}, and in particular a~\emph{child} of the chord immediately above it, which is called its~\emph{parent}. 
Two chords are \emph{siblings} if they are either top chords or children of a common parent. 
Two chords are \emph{same-end} 
if their ends occur in a common segment $(e,e+1)$, are \emph{head-to-tail} if the first ends in the segment where the second starts, and are \emph{sticky} if their 
starts lie in consecutive segments $(s,s+1)$ and~$(s+1,s+2)$.
\end{definition}

\begin{example}
 Consider the chord diagram in \cref{cd-example}. $D_4$ has parent $D_5$ and ancestors $D_5$ and $D_6$. $D_1$ and $D_2$ are siblings, and $D_3$ and $D_6$ are siblings. Chords $D_2$ and $D_3$ are same-end, chords $D_1$ and 
	$D_2$ are head-to-tail, and chords $D_5$ and $D_6$ are sticky.
\end{example}

\begin{remark}
\label{index-sets}
The definition of a chord diagram naturally extends to the case of a finite set of markers 
$N \subset \{1,\dots,n\}$ rather than $\{1,\dots,n\}$, and a set $K$ of chord indices rather than $\{1,\dots,k\}$. 
We will always have that the largest marker is 
$n\in N$, the starts and ends of chords will be consecutive pairs in~$N$ (and also $\mathbb{N}$) and the rightmost top chord will be denoted by $D_{\rtop} = D_{\max K}$. 
The notion of chord subdiagram in \cref{def:leftright} is an example of this extended notion of chord diagram.
\end{remark}

\begin{definition}[Left and right subdiagrams]
\label{def:leftright}
Let $D$ be a chord diagram in $\mathcal{CD}_{n,k}$.
A \emph{subdiagram} is obtained by restricting to a subset of the chords and a 
subset of the markers which contains both these chords and the marker~$n$.
Let
$\ctop = (a,b,c,d)$ be the rightmost top chord of~$D$, where $1\leq a<b<c<d<n$, and moreover $a,b$ and $c,d$ are consecutive. 

In the case that $d,n$ are consecutive as well
we define $D_L$, the \emph{left subdiagram} of $D$, on the markers $N_L=\{1,2,\dots,a,b,n\}$
and the \emph{right subdiagram} $D_R$ on~$N_R=\{b,\dots,c,d,n\}$. The subdiagram $D_L$ contains all chords that are to the left of $D_{\rtop}$, and $D_R$ contains the descendants of~$D_{\rtop}$. 
\end{definition}

\begin{example}
\label{right-left-diagrams}
For the chord diagram $D$ in \cref{cd-example}, the rightmost top chord is $D_6 = (8,9,13,14)$, so $N_L = \{1,\dots,9,15\}$ and $D_L = \{D_1,D_2,D_3\}$, while $N_R = \{9,\dots,15\}$ and $D_R = \{D_4,D_5\}$.
\end{example}

\begin{definition}[Standard BCFW cell from a chord diagram]
\label{def:standardfromCD}
Let $D$ be a chord diagram with $k$ chords on a set of markers $N$.
We recursively construct from $D$ a standard BCFW cell $S_D$ in ${\Gr}^{\scriptscriptstyle\ge0}_{k, N}$ as follows:
\begin{enumerate}[align=left]
\itemsep0.125em
\item 
If $k=0$, then the BCFW cell is the trivial cell $S_D:=\Gr^{\scriptscriptstyle\ge0}_{0,N}$.
\item Otherwise, let $D_{\rt}=(a,b,c,d)$ be the rightmost top chord of~$D$ and let $p$ denote the penultimate marker in $N$.
\begin{enumerate}
\itemsep0.125em
\item 
If $d\neq p$, let $D'$ be the subdiagram on 
$N\setminus \{p\}$
with the same chords as $D$, and let $S_{D'}$ be the standard BCFW cell associated to $D'$.  Then, we define $S_D := \pre_{p} S_{D'}$, which denotes the standard BCFW cell obtained from $S_{D'}$ by inserting a zero column in the penultimate position~$p$. 
\item If $d=p$, let $S_L$ and $S_R$ be the standard BCFW cells on $N_L$ and $N_R$ associated to 
the left and right subdiagrams	$D_L$ and $D_R$ of $D$.  Then, we let $S_D := S_L \bcfw S_R$, the standard BCFW cell which is their BCFW product as in \cref{def:butterfly}.
\end{enumerate}
\end{enumerate}
\end{definition}

\begin{figure}
	\includegraphics[width=0.7\textwidth]{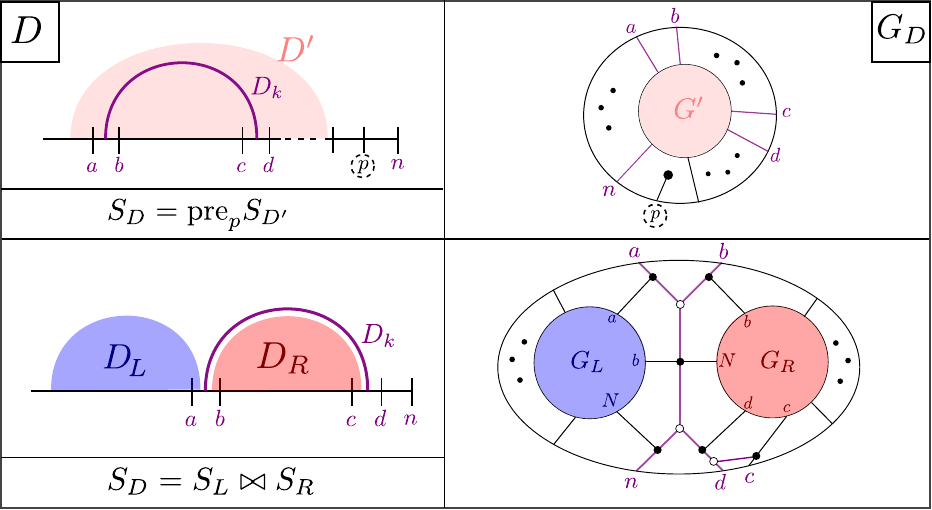}
	\caption{Recursive construction of a standard BCFW cell from a chord diagram as in \cref{def:standardfromCD}. 
 Top left (right): construction of $D$ ($G_D$) from $D'$ ($G'$) as in $(1a)$; bottom left (right) construction of $D$ ($G_D$) from $D_L,D_R$ ($G_L,G_R$) as in $(1b)$.}
	\label{fig:bcfw_chord}
\end{figure}

\begin{example}\label{ex:chordcell}
The standard BCFW cell $S_D$ of the chord diagram $D$ in \cref{cd-example} is $S_L \bcfw S_R$ where the chord subdiagrams $D_L,D_R$ are as in \cref{right-left-diagrams}. One can keep applying the recursive definition and obtain:
\begin{align*}
S_L \;& =\;
\Gr_{0,\{1,2,15\}}
\bcfw
\left( 
\left(
\Gr_{0,\{2,3,4,15\}}
\bcfw
\Gr_{0,\{4,5,6,15\}}
\right)
\bcfw
\Gr_{0,\{6,7,8,9,15\}}
\right)
\\
S_R \;& =\;
\pre_{14} 
\left(
\Gr_{0,\{9,10,15\}}
\bcfw
\left(
\Gr_{0,\{10,11,15\}}
\bcfw
\Gr_{0,\{11,12,13,15\}}
\right)
\right)
\end{align*}
\end{example}

\subsection{BCFW cells from recipes}\label{sec:recipes}
In this section, we review the conventions for labeling general BCFW cells from \cite[Section 6]{even2023cluster}. 
Each general BCFW cell may be specified 
by a list of operations from \cref{def:BCFW_cell}. 
The class of general BCFW cells includes the standard BCFW cells, but 
is additionally closed under the operations
of cyclic shift, reflection, and inserting a zero column anywhere (cf. \cref{def:BCFW_cell})
at any stage of the recursive generation.  
Since any sequence of these operations can be expressed as $\pre_I$ followed by $\cyc^r$ followed by $\refl^s$ for some $I, r, s$,
we can specify in a concise form which ones take place after each BCFW product. We will record the generation of a BCFW cell using the formalism of \emph{recipe} in \cref{def:recipe}.

\begin{definition}[General BCFW cell from a recipe] \label{def:recipe} 
A \emph{step-tuple} on 
a finite index set $N\subset \NN$ is a $4$-tuple 
\[((a_i, b_i, c_i, d_i, n_i),\pre_{I_i}, \cyc^{r_i}, \refl^{s_i}),\]
where $I_i \subseteq N$ such that $n_i$ is the largest element in $N \setminus I_i$, 
$a_i<b_i$ and $c_i<d_i<n_i$ are both consecutive in $N\setminus I_i$,  $0 \leq r_i < |N|$, and $s_i \in \{0,1\}$. 
A step-tuple records in order: a BCFW product of two cells using indices $(a_i, b_i, c_i, d_i, n_i)$; zero column insertions in positions $I_i$; applying the cyclic shift $r_i$ times; applying reflection $s_i$ times. Note that some of these operations may be the identity. Each operation in a step-tuple which is not the identity is called a~\emph{step}. 

A \emph{recipe} $\rcp$ on $N$ is either the empty set (the \emph{trivial recipe} on $N$, denote $\rcp^{\triv}_N$),
or a recipe $\rcp_L$ on $N_L$ followed by a recipe $\rcp_R$ on $N_R$ followed by a step-tuple
$((a_k, b_k, c_k, d_k, n_k),\pre_{I_k}, \cyc^{r_k}, \refl^{s_k})$ on $N$,
where $N_L = (N\setminus I_k) \cap \{n_k, \dots, a_k,b_k\}$ and 
$N_R = (N\setminus I_k) \cap \{b_k,\dots, c_k,d_k,n_k\}$.
We let $S_{\rcp}$ denote the general BCFW cell on $N$ obtained by applying the sequence of 
	operations specified
by $\rcp$. If $\rcp$ consists of $k$ step-tuples, then $S_{\rcp} \subset \Gr_{k, N}^{\scriptscriptstyle\geq 0}$.
\end{definition}

\begin{example} \label{ex:recipe}
Consider the recipe $\rcp$ consisting of the following sequence of $4$ step-tuples:
\begin{equation*}
    ((3,4,5,6,12),\pre_{2}), ((1,2,5,6,12), \cyc^{2}, \refl)), ((6,7,8,9,11),\pre_{10,12}), ((5,6,10,11,12), \cyc^{4}, \refl).
\end{equation*}
\cref{fig:bcfw_tile} shows the plabic graph of the general BCFW cell $S_\rcp$ obtained from $\rcp$ following \cref{def:recipe}. 
\end{example}

\begin{figure}
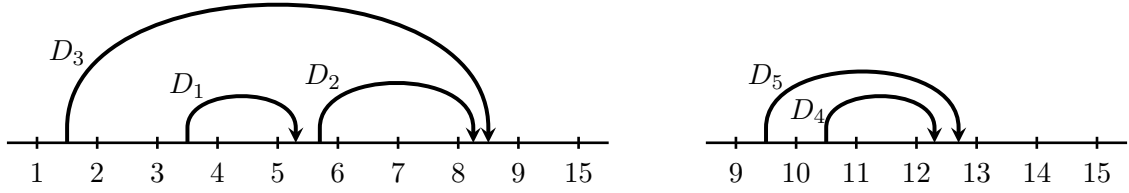

\centering
\begin{center}
\tikz[line width=1,scale=0.8]{
\draw (0.5,0) -- (10.5,0);
\foreach \i/\j in {1/1,2/2,3/3,4/4,5/5,6/6,7/7,8/8,9/9,10/15}{
\def\x{\i}
\draw (\x,-0.1)--(\x,+0.1);
\node at (\x,-0.5) {\j};}
\foreach \i/\j in {1/8, 3/4.8, 5.2/7.75}{
\def\x{\i+0.5}
\def\y{\j+0.5}
\draw[line width=1.5,-stealth] (\x,0) -- (\x,0.25) to[in=90,out=90] (\y,0.25) -- (\y,0);
}
\node at(1.5,1.5) {$D_3$};
\node at(3.5,0.95) {$D_1$};
\node at(5.75,1.1) {$D_2$};
}
\hspace{1cm}
\tikz[line width=1,scale=0.8]{
\draw (8.5,0) -- (15.5,0);
\foreach \i in {9,10,...,15}{
\def\x{\i}
\draw (\x,-0.1)--(\x,+0.1);
\node at (\x,-0.5) {\i};}
\foreach \i/\j in {9/12.2, 10/11.8}{
\def\x{\i+0.5}
\def\y{\j+0.5}
\draw[line width=1.5,-stealth] (\x,0) -- (\x,0.25) to[in=90,out=90] (\y,0.25) -- (\y,0);
}
\node at(9.5,1.1) {$D_5$};
\node at(10.2,0.53) {$D_4$};
}
\end{center}
\caption{The left diagram $D_L$ and the right diagram $D_R$ for the chord diagram $D$ in \cref{cd-example}.}
\label{fig:d_l}
\end{figure}

\begin{remark}[Recipe from a chord diagram]\label{rem:recipechord}
We now explain how a chord diagram $D$ gives rise to a recipe
$\rcp(D)$.  
Let $D$ be a chord diagram with $k$ chords on a set of markers $N$.  If $k=0$, $\rcp(D)$ is the trivial recipe on $N$.
Otherwise, let 
$(a_k, b_k, c_k, d_k)$ denote the rightmost top chord, 
let $n:=\max N$, and let 
$I_k:= \{p\in N\ \vert \ d_k<p<n\}.$ 
Let $\overline{D}$ be the chord diagram obtained from $D$ by removing the markers in $I_k$, and let $D_L$ and $D_R$ be the 
left and right subdiagrams of $\overline{D}$, on marker sets
 $N_L\subseteq N\setminus I_k$ and $N_R\subseteq N\setminus I_k$, respectively. 
Then the recipe $\rcp(D)$ from $D$ is recursively constructed
as the recipe $\rcp(D_L)$ followed by the recipe $\rcp(D_R)$ followed by the step-tuple $((a_k, b_k, c_k, d_k, n),
\pre_{I_k})$ on $N$.
\end{remark}

\begin{example}
We now illustrate \cref{rem:recipechord} on the chord diagram $D_L$ of \cref{ex:chordcell}, which is pictured in \cref{fig:d_l}.  In this case we obtain the recipe
$$\rcp^{\triv}_{\{1,2,15\}},\rcp^{\triv}_{\{2,3,4,15\}}, \rcp^{\triv}_{\{4,5,6,15\}}, ((3,4,5,6,15)), \rcp^{\triv}_{\{6,7,8,9,15\}}, ((5,6,8,9,15)),((1,2,8,9,15)).$$
\end{example}

\begin{figure}
	\includegraphics[width=\textwidth]{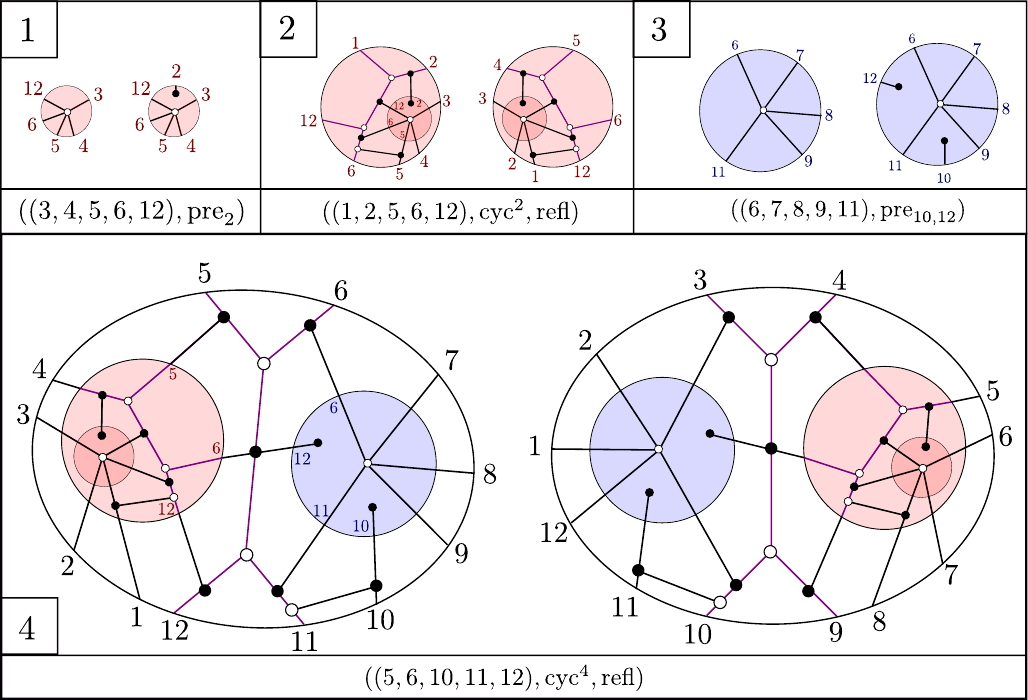}
	\caption{Illustration of building up a BCFW cell using the recipe $\rcp$ of \cref{ex:recipe}. Box $i$ shows the result after the first $i$ step-tuples. The result of the step $(a_i,b_i,c_i,d_i,n_i)$ is shown on the left in each box, and the results of the steps $\mbox{pre}_{I_i}, \mbox{cyc}^{r_i}$ and $\mbox{refl}^{s_i}$ are shown on the right.}
 
	\label{fig:bcfw_tile}
\end{figure}

Because our arguments are frequently recursive, we need some notation for the BCFW cells obtained by deleting the final step of a recipe. We use the following notation throughout.

\begin{notation}\label{not:L-and-R}
Let $\rcp$ be a recipe for a BCFW cell $S \in \Gr_{k, N}^{\scriptscriptstyle\geq 0}$. Let $\st$
denote the final step, which is either $(a_k,b_k,c_k,d_k,n_k), \pre_{I_k}, \cyc$ or $\refl$. If $\st \neq (a_k, b_k, c_k, d_k, n_k)$, then we let $\rcpp$ denote the recipe obtained by replacing $\st$ with the identity. Note that $S_{\rcpp}$ is again a BCFW cell. If $\st=(a_k, b_k, c_k, d_k, n_k)$, let $\rcp_L$ and $\rcp_R$ denote the recipes on $N_L$ and $N_R$ as in \cref{def:recipe}. Then $\rcp_L, \rcp_R$ are recipes for BCFW cells $S_L \subset \Gr_{k_L, N_L}^{\scriptscriptstyle\geq 0}$ and $S_R\subset \Gr_{k_R, N_R}^{\scriptscriptstyle\geq 0}$ and $S = S_L \bcfw S_R$. Note that to avoid clutter, we will usually use $L,R$ as subscripts rather than writing $S_{\rcp_L}, S_{\rcp_R}$.
\end{notation}

\begin{remark}
In contrast with the bijective correspondence between standard BCFW cells and chord diagrams, multiple recipes could give rise to the same general BCFW cell. 
Even the sets of 5 indices that are involved in the BCFW products are not uniquely determined by the resulting cell.
\end{remark}

\section{Background: cluster algebra and BCFW tiles}\label{background2}
In this section we review some of the connections between BCFW tiles and the cluster algebra of the Grassmannian $\Gr_{4,n}$. See e.g. \cite[Section 3]{even2023cluster} for a relevant review on cluster algebras.

\subsection{Product promotion}
A key ingredient for connecting BCFW tiles to cluster algebras is \emph{product promotion} -- a map which is the algebraic counterpart of the BCFW product.
\begin{definition}\label{def:product_promotion}
Using \cref{not:LR_cluster}, \emph{product promotion} is the homomorphism 
	$$\Psi_{B} = \Psi: \C(\widehat{\Gr}_{4,N_L})\times \C(\widehat{\Gr}_{4,N_R}) \to \C(\widehat{\Gr}_{4,n}),$$ 
induced 
by the following substitution:
\begin{equation*}
\text{on $\widehat{\Gr}_{4,N_L}$: } b \;\mapsto\; \frac{(ba)\cap (cdn)}{\lr{a\,c\,d\,n}}, 
\end{equation*}
\begin{equation*}
\text{on $\widehat{\Gr}_{4,N_R}$: } n \;\mapsto\;
\frac{(ba)\cap (cdn)}{\lr{a\,b\,c\,d}} 
\label{eq:promotionvectors2}, \, d \;\mapsto\; \frac{(dc)\cap (abn)}{\lr{a\,b\,c\,n}}.
\end{equation*}
\end{definition}
The vector $(ij)\cap (rsq):=v_i \lr{j \, r \, s\, q}-v_j \lr{i \, r \, s\, q}= - v_r \lr{i \, j \, s\, q}+v_s \lr{i \, j \, r\, q}-v_q \lr{i \, j \, r\, s} $ is in the intersection of the $2$-plane and the $3$-plane spanned by $v_i,v_j$ and $v_r,v_s,v_q$, respectively.

 \cref{thm:promotion2} below says\footnote{We will sometime omit the dependence on the indices $B=\{a,b,c,d,n\}$ in $\Psi$ (and $\rPsi$) for brevity.} that $\Psi$ is a \emph{quasi-homomorphism} from the cluster algebra\footnote{$\C[\widehat{\Gr}_{4,N_L}^{\circ}] \times \C[\widehat{\Gr}_{4,N_R}^{\circ}]$ is a cluster algebra where each seed 
is the disjoint union of a seed of each factor.} $\C[\widehat{\Gr}_{4,N_L}^{\circ}] \times \C[\widehat{\Gr}_{4,N_R}^{\circ}]$ to the cluster algebra $\C[\widehat{\Gr}_{4,n}^{\circ}]$.   See \cite[Definition 3.23]{even2023cluster} or \cite[Definition 3.1, Proposition 3.2]{Fraser} for the definition of a quasi-homomorphism.

\begin{theorem}\label{thm:promotion2}\cite[Theorem 4.7]{even2023cluster}
Product promotion 
$\Psi$ is a quasi-homomorphism of cluster
algebras. In particular, $\Psi$ maps a cluster variable (respectively, cluster)
of  $\C[\widehat{\Gr}_{4,N_L}^{\circ}] \times \C[\widehat{\Gr}_{4,N_R}^{\circ}]$, to a cluster variable (respectively, sub-cluster) of 
$\C[\widehat{\Gr}_{4,n}^{\circ}]$, up to multiplication by Laurent monomials in $\mathcal{T'}:=\{ 
\lr{a\,b\,c\,n},
\lr{a\,b\,c\,d},
\lr{b\,c\,d\,n}, 
\lr{a\,c\,d\,n}\}$. 
\end{theorem}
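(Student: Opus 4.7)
The plan is to apply the standard criterion for quasi-homomorphisms (see \cite[Proposition 3.2]{Fraser} or \cite[Definition 3.23]{even2023cluster}): a homomorphism of function fields is a quasi-homomorphism of cluster algebras as soon as it sends one seed to a seed up to rescaling each cluster variable by a Laurent monomial in the designated universally invertible set (here $\mathcal{T}'$), with matching mutable exchange matrix; mutation-covariance then propagates the property to every seed.

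First, I would pick seeds $\Sigma_L$ of $\C[\widehat{\Gr}_{4,N_L}^\circ]$ and $\Sigma_R$ of $\C[\widehat{\Gr}_{4,N_R}^\circ]$ coming from plabic graphs $G_L, G_R$ whose boundary structure near $\{a,b,c,d,n\}$ is compatible with the butterfly gluing of \cref{fig:butterfly}. By Scott's construction, each face of $G_L$ (resp.\ $G_R$) is labeled by a Plücker coordinate, and I would arrange $G_L, G_R$ so that the Plückers appearing in $\Sigma_L, \Sigma_R$ interact transparently with the substitutions defining $\Psi$.

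The core algebraic step is to compute $\Psi$ on Plücker coordinates. For a $3$-subset $T \subset N_L \setminus \{a,b\}$, the substitution for $b$ gives
\[
\Psi\bigl(\langle Tb\rangle\bigr) \;=\; \frac{\langle Tb\rangle\langle acdn\rangle - \langle Ta\rangle\langle bcdn\rangle}{\langle acdn\rangle},
\]
and, by the five-term Plücker identity in $\Gr_{4,n}$ among the $4$-subsets of $\{a,b,c,d,n\}$ (valid because these five vectors lie in a $4$-dimensional space), the numerator rewrites as
\[
\pm\langle Tn\rangle\langle abcd\rangle \,\pm\, \langle Td\rangle\langle abcn\rangle \,\pm\, \langle Tc\rangle\langle abdn\rangle.
\]
Analogous formulas hold on the right for Plückers involving $d$ or $n$. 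For the subsets $T$ arising from Scott's labels on the chosen $G_L$ (and their right-hand analogs), this three-term sum collapses---either because terms vanish outright, or via a further $3$-term Plücker identity---to a single cluster variable of $\C[\widehat{\Gr}_{4,n}^\circ]$ multiplied by a Laurent monomial in $\mathcal{T}'$, as required.

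The principal technical obstacle is matching the quivers. I would verify that the plabic graph $G_L \bcfw G_R$ obtained by butterfly-gluing $G_L$ and $G_R$ yields, via Postnikov's construction, exactly the quiver on $\Psi(\Sigma_L \sqcup \Sigma_R)$ at mutable vertices, with the only extra arrows being to or from the $\mathcal{T}'$-directions (which are permissible for a quasi-homomorphism since $\mathcal{T}'$-factors are absorbed into the rescalings). This reduces to a local check near the butterfly subgraph of $G_L \bcfw G_R$: upon gluing, the only faces whose adjacencies change are those touching indices in $\{a,b,c,d,n\}$, and a direct inspection shows that their new neighbors are precisely the faces labeled by elements of $\mathcal{T}'$. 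Once the initial seed and its quiver are matched, the full claim---that every cluster variable of the source maps to a cluster variable of the target, and every cluster to a subcluster, up to Laurent monomials in $\mathcal{T}'$---follows formally from the criterion.
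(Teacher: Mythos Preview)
The paper under review does not prove this theorem: it is stated as background with the citation \cite[Theorem 4.7]{even2023cluster} to the companion paper, and no proof appears here. So there is no ``paper's own proof'' in this document to compare your proposal against.

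That said, your outline follows the expected strategy---verify the quasi-homomorphism criterion of \cite[Proposition 3.2]{Fraser} on a single well-chosen seed---and this is indeed how the companion paper proceeds. Your sketch is reasonable but leaves the two genuinely nontrivial steps unspecified: (i) you do not name which seeds $\Sigma_L,\Sigma_R$ to use, and the claim that the three-term numerator ``collapses to a single cluster variable times a Laurent monomial in $\mathcal{T}'$'' is not automatic for arbitrary Pl\"ucker face labels---one must choose the plabic graphs so that the relevant Pl\"uckers have the right shape near $\{a,b,c,d,n\}$; and (ii) the quiver match ``by direct inspection'' is where most of the work lies, and asserting it without carrying it out is a gap. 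These are exactly the details that the proof in \cite{even2023cluster} supplies. Your proposal is a correct high-level plan, but as written it is a proof outline rather than a proof.
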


\begin{remark}
\cref{def:product_promotion} and \cref{thm:promotion2} extend also to the degenerate cases, e.g. for $a=1$ (\emph{upper promotion}), where $\Psi:\C(\widehat{\Gr}_{4,N_R}) \to \C(\widehat{\Gr}_{4,n})$, see \cite[Section 4.3]{even2023cluster}. 
\end{remark}

\begin{definition}\label{def:rPsi} Let $x$ be a cluster variable of $\C[\widehat{\Gr}_{4,N_L}^{\circ}]$ or $\C[\widehat{\Gr}_{4,N_R}^{\circ}]$. 
 We define the \emph{rescaled product promotion} $\rPsi(x)$ of $x$ to be the cluster variable of $\Gr_{4,n}$ obtained from $\Psi(x)$ by removing\footnote{If $x= \lr{bcdn}$, then $\rPsi(x)=\Psi(x)= x$.} the Laurent monomial in $\mathcal{T'}$ (c.f. \Cref{thm:promotion2}).
\end{definition}

The fact that product promotion is a cluster quasi-homomorphism may be of independent interest
in the study of the cluster structure on $\Gr_{4,n}$.
Much of the work thus far on the cluster structure of the Grassmannian
has focused on cluster variables which are polynomials in Pl\"ucker coordinates
with low degree; by contrast, the cluster variables we obtain can have arbitrarily high degree
in Pl\"ucker coordinates. We introduce the following notation:
\begin{equation}
	\lr{a\,b\,c \br d\,e \br f\,g\,h}  := 
\lr{a \, b \, c \, (d \, e) \cap (f \, g\, h)}=\lr{a\,b\,c\,d}\,\lr{e\,f\,g\,h} - \lr{a\,b\,c\,e}\,\lr{d\,f\,g\,h} \label{eq1:quadratic}.
\end{equation}
More generally, we consider polynomials called \emph{chain polynomials} of degree~$s+1$ as follows (see \cite[Definition 2.5]{even2023cluster}):
\begin{align}\label{eq:chainpols}
\begin{split}
	&  \lr{a_0\,b_0\,c_0 \br d_{1,0}\,d_{1,1} \br b_1\,c_1 \br d_{2,0}\,d_{2,1} \br b_2\,c_2 \br \dots \br d_{s,0}\,d_{s,1} \br b_s\,c_s\,a_s} \\[0.25em] 
	& 
	\;=\; \sum_{t \in \{0,1\}^s}\;(-1)^{t_1+\dots+t_s} \,\lr{a_0\,b_0\,c_0\,d_{1,{t_1}}}\, \lr{d_{1,{1-t_1}}\,b_1\,c_1\,d_{2,{t_2}}} \, \lr{d_{2,{1-t_2}}\,b_2\,c_2\,d_{3,{t_3}}} \, \cdots \, \lr{d_{s,{1-t_s}}\,b_s\,c_s\,a_s}
 \end{split}
\end{align}

\begin{example}\label{ex:psi}
For $N_L$ and $N_R$ as in \cref{ex:bcw_tile}, the only Pl\"ucker which changes is: $
  \Psi(\lr{1 \, 2 \, 4 \, 7})= \lr{1 \, 2 \, 7 | 3 \, 4| 5\, 6\, 7}/ \lr{3 \, 4\, 6\, 7}$, and $\rPsi(\lr{1 \, 2 \, 4 \, 7})=\lr{1 \, 2 \, 7 | 3 \, 4| 5\, 6\, 7}$ which is a quadratic cluster variable in $\mbox{Gr}_{4,7}$, e.g. obtained by mutating $\lr{2367}$ in the rectangle seed $\Sigma_{4,7}$ (see \cite[Definition 3.12]{even2023cluster}).   
\end{example}

\subsection{Coordinate cluster variables}
Using rescaled product promotion and  \cref{def:opGr}, 
we associate to each recipe ${\rcp}$ a collection of compatible cluster variables $\Irr(\rcp)$ for $\Gr_{4,n}$.  This will allow us to describe each (open) tile as the subset of the Grassmannian $\Gr_{k,k+4}$ where these cluster variables
take on particular signs.

\begin{definition}[Coordinate cluster variables of BCFW cells]\label{def:generalcluster}
Let $S_\rcp \subset \Grk$ be a BCFW cell. 
We use \cref{not:L-and-R}.  
The \emph{coordinate cluster variables} $\Irr(\rcp):=\{\rzeta^\rcp_i\}$ for $S_\rcp$ are defined recursively as follows:
\begin{itemize}
\itemsep0.25em
\item If $\st= (a, b, c, d, n)=:B$, then we define 
\vspace{1em}
\begin{equation*}
 \ralpha_k^{\rcp} \,:=\, \lr{b\, c \,d\, n}, \quad  \rbeta_k^{\rcp} \,:=\, \lr{a\, c\, d\, n}, \quad \rgamma_k^{\rcp} \,:=\,  \lr{a\, b\, d\, n}, \quad \rdelta_k^{\rcp} \,:=\, \lr{a\, b\, c\, n}, \quad \repsilon_k^{\rcp} \,:=\, \lr{a\, b\, c\, d}   \vspace{1em}
\end{equation*} 
 and for $i \neq k$, \quad
$\rzeta^\rcp_i \;:=\; \begin{cases}
\rPsi_{B}(\rzeta_i^{L})\\
\rPsi_{B}(\rzeta_i^{R})
\end{cases}$ if the $i$th step-tuple is in $\begin{cases}
  \rcp_L\\
  \rcp_R\\
\end{cases}.$
\vspace{1em}
\item If $\st=$ 
$\begin{cases}
    \refl \\
    \cyc \\
    \pre_{I_k}
\end{cases}$ then $\rzeta^\rcp_i := \begin{cases}
    \refl^* \rzeta^{\rcpp}_i  \\
    \cyc^{-*} \rzeta^{\rcpp}_i  \\
    \rzeta^{\rcpp}_i 
\end{cases}$.
\end{itemize}
Note that $\Irr(\rcp)$ depends on the recipe $\rcp$ rather than just the BCFW cell.
\end{definition} 

\begin{notation}\label{not:pluckfunc}
Given a cluster variable $x$ in $\Gr_{4,n}$, we will denote by $x(Y)$ the functionary on $\Gr_{k,k+4}$ obtained by identifying Pl\"ucker coordinates $\lr{I}$ in $\Gr_{4,n}$ with twistor coordinates $\llrr{I}$ in $\Gr_{k,k+4}$ (cf. \cref{def:tw_coords}). 
\end{notation} 

Interpreting each cluster variable as a functionary,
we describe each BCFW tile  as the semialgebraic subset of
$\Gr_{k,k+4}$ where the coordinate cluster variables take on particular signs. This appears as Corollary 7.12 in \cite{even2023cluster}:

\begin{theorem}[Sign description for general BCFW tiles]\label{cor:cluster-sign-description} 
	Let $\gt{\rcp}$ be a general BCFW tile. For each element $x$ of $\Irr(\rcp)$, the functionary $x(Y)$ has a definite sign $s_x$ on $\gto{\rcp}$ and 
	\[\gto{\rcp}= \{Y \in \Gr_{k,k+4}: s_x \, x(Y) >0 \text{ for all } x \in \Irr(\rcp) \}.\]
\end{theorem}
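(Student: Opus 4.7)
The natural approach is induction on the number of step-tuples in the recipe $\rcp$. The base case is the trivial recipe: $S_{\rcp} = \Gr^{\scriptscriptstyle\geq 0}_{0,N}$ maps to a single point in $\Gr_{0,4}$, and $\Irr(\rcp) = \emptyset$, so the claim holds vacuously.

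For the inductive step, I would treat the possible final steps of $\rcp$ separately. The steps $\pre_{I_k}$, $\cyc$, and $\refl$ are straightforward. Adding zero columns does not change the amplituhedron image (so the tile is unchanged), and it does not change $\Irr$ by \cref{def:generalcluster}. Cyclic shift and reflection descend to automorphisms of $\Gr_{k,k+4}$ which act on twistors by permutation and sign, exactly matching the definition $\rzeta^{\rcp}_i := \cyc^{-*} \rzeta^{\rcpp}_i$ or $\refl^* \rzeta^{\rcpp}_i$. Thus a definite sign on $\gto{\rcpp}$ transfers to a definite sign on $\gto{\rcp}$, and the semialgebraic description transfers directly.

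The crux is the BCFW product step $\st = (a,b,c,d,n)$, where $S_\rcp = S_L \bcfw S_R$. Here I must show two things: (i) the five `new' coordinate cluster variables $\ralpha_k = \llrr{bcdn}, \rbeta_k = \llrr{acdn}, \rgamma_k = \llrr{abdn}, \rdelta_k = \llrr{abcn}, \repsilon_k = \llrr{abcd}$ have definite (in fact positive) signs on $\gto{\rcp}$, which should follow from the positivity of $Z$ and the structure of the BCFW product on plabic graphs in \cref{fig:butterfly}; and (ii) the promoted cluster variables $\rPsi(\rzeta^L_i)$ and $\rPsi(\rzeta^R_i)$ have definite signs that can be read off from the inductive sign descriptions of $\gto{\rcp_L}$ and $\gto{\rcp_R}$. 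For (ii), the central tool is the compatibility of the amplituhedron map with product promotion: for $C = C_L \bcfw C_R$ and $Y = \tilde{Z}(C)$, one expects to produce $Z_L, Z_R$ so that $Y_L = \tilde{Z}_L(C_L) \in \gto{\rcp_L}$ and $Y_R = \tilde{Z}_R(C_R) \in \gto{\rcp_R}$, and so that for any twistor functionary $x$ on the left (resp.\ right) factor, $x(Y_L)$ (resp.\ $x(Y_R)$) equals $\Psi(x)(Y)$. By \cref{thm:promotion2}, $\Psi(x)$ differs from $\rPsi(x)$ by a Laurent monomial in $\mathcal{T'} = \{\llrr{abcn},\llrr{abcd},\llrr{bcdn},\llrr{acdn}\}$, and these four twistors are precisely (up to sign) four of the five new coordinate cluster variables, all of which are positive on $\gto{\rcp}$ by (i). Therefore the sign of $\rPsi(\rzeta^L_i)(Y)$ on $\gto{\rcp}$ agrees with the sign of $\rzeta^L_i(Y_L)$ on $\gto{\rcp_L}$, which is definite by induction. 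Combining (i) and (ii) gives the sign assignment $s_x$ for every $x \in \Irr(\rcp)$, and the forward inclusion of the semialgebraic description. The reverse inclusion --- that a point $Y$ in the given semialgebraic set lies in $\gto{\rcp}$ --- should follow from the same compatibility, using it in the other direction to reconstruct $Y_L, Y_R$ in the inductively-described open tiles and then lifting to a point of the BCFW cell.

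The main obstacle is item (ii): establishing the precise compatibility between the amplituhedron map on the BCFW product and product promotion, including the correct choice of the reduced data $Z_L, Z_R$ so that $Y_L, Y_R$ really land in the corresponding open tiles. This is the geometric content linking \emph{cells} glued by $\bcfw$ to \emph{functionaries} glued by $\Psi$, and it is the place where all positivity assumptions on $Z$ must be used. Once this compatibility is in hand, the sign propagation is purely formal, and the rescaling by $\mathcal{T'}$-monomials is harmless because those monomials are manifestly positive on $\gto{\rcp}$.
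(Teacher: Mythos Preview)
This statement is not proved in the present paper; it is quoted as Corollary~7.12 of the companion paper \cite{even2023cluster}, so there is no proof here to compare against directly. That said, the companion paper's route can be inferred from the surrounding text: it first establishes \cref{thm:BCFW-tile-and-sign-description} (Theorem~7.7 there) by constructing the explicit twistor matrix $\twmt_\rcp(Y)$, which simultaneously inverts $\tZ$ on $S_\rcp$ and shows that all coordinate \emph{functionaries} $\czeta_i(Y)$ are positive on $\gto{\rcp}$; the cluster-variable version then follows formally because each $\rzeta_i$ differs from $\czeta_i$ by a signed Laurent monomial in elements of $\mathcal{T}'$, which are themselves coordinate functionaries of known sign (cf.\ \cref{prop:coord-fcnaries-in-domino-var} for the standard case).

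Your outline runs the induction directly on the cluster variables rather than passing through the functionaries, but the substance is the same: both approaches hinge on the compatibility between the amplituhedron map on $S_L \bcfw S_R$ and product promotion $\Psi$, which is exactly what you flag as the main obstacle and what the companion paper packages as its Theorem~11.3. One point where your sketch is genuinely thin is the reverse inclusion at the $\bcfw$ step. You write that one should ``reconstruct $Y_L, Y_R$'' and then ``lift to a point of the BCFW cell,'' but $Y$ does not a priori decompose into a pair $(Y_L, Y_R)$, and showing that the element $C_L \bcfw C_R$ you assemble actually maps back to $Y$ (rather than merely to something with the same signs) is real work. The companion paper handles this via the explicit twistor matrix, which gives a concrete formula for the preimage; without that device your reverse inclusion remains a promissory note rather than an argument.
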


\begin{example}[Coordinate cluster variables] \label{ex:coord_clust}
The coordinate cluster variables for $S_{\rcp}$ in \cref{fig:bcfw_tile} are obtained by applying the recursion in \cref{def:generalcluster}:\vspace{1em}\\ 
\setlength{\tabcolsep}{2pt}
\hspace{-1em} 
\begin{tabular}{|c|| c| c| c|c|c|}
 \hline
 $i$ & $\ralpha_i$ & $\rbeta_i$  & $\rgamma_i$ & $\rdelta_i$ & $\repsilon_i$\\ [0.5ex] 
 \hline \hline
 $1$ & $ \lr{7\,8\,9 \br 4 \,3 \br 9\,A\,B}$ & $ \lr{6\,8\,9 \br 4\,3 \br 9 \,A \, B}$ & $ \scriptstyle \lr{9\,A\,B \br 3\,4 \br 6\, 7 \br 8\,9 \br 3 \,4 \, 5}$ & $
    \scriptstyle \lr{6\,7\,8 \br 4\,5 \br 8\, 9 \br 3\,4 \br 9 \,A \, B}$ & $ \lr{6\,7\,8\,9}$ \\
 \hline
 $2$ & $\lr{5\,8\,9 \br 4 \,3 \br 9\,A\,B}$ & $
      \lr{3\,4\,8\,9}$ & $
     \lr{3\,4\,5\,9}$ & $
   \lr{3\,4\,5\,8}$ & $
 \lr{4\,5\,8\,9}$ \\
 \hline
 $3$ & $\lr{1\,2\,C \br B \,A \br 3\,4\,9}$ & $
     \lr{1\,3\,C \br A\,B \br 3\,4\,9}$ & $
     \lr{2\,3\,C \br A \,B \br 3\,4\,9}$ & $
    \lr{1\,2\,3 \br B \,A \br 3\,4\,9}$ & $
 \lr{1\,2\,3\,C}$\\
  \hline
 $4$ & $\lr{3 \, 9\,A \,B}$ & $ \lr{4 \, 9\,A \,B} $ & $\lr{3 \, 4 \,9\,A}$ & $ \lr{3 \, 4\,9 \,B}$ & $ \lr{3 \, 4\,A \,B}$\\
  \hline 
\end{tabular}
\vspace{1em} \\ See \cite[Example 7.4]{even2023cluster} for more details.
\end{example}

\subsection{BCFW tiles}\label{sec:bcfw_tiles}

In \cite[Section 7]{even2023cluster} we proved that 
BCFW cells give tiles of the amplituhedron $\mathcal{A}_{n,k,4}(Z)$ by explaining how  to invert the amplituhedron map $\tilde{Z}$ on the image $\gto{\rcp} = \tZ(S_{\rcp})$ of each BCFW cell $S_{\rcp}$.  For each point $Y \in \gto{\rcp}$, the pre-image $\tilde{Z}^{-1}(Y)$ is a point in $\Grk$ represented by the \emph{twistor matrix} $\twmt_\rcp(Y)$, whose entries are expressed in terms of ratios of the \emph{coordinate functionaries} $\{\czeta^\rcp_i (Y)\}_{i=1}^{5k}$ of $S_{\rcp}$, see \cite[Definition 7.1]{even2023cluster}. The coordinate functionaries are defined recursively in a similar way as in \cref{def:generalcluster} using product promotion. Moreover, they can be used to give a semilagebraic description of the tile. This is summarized in the theorem below, which appears as \cite[Theorem 7.7]{even2023cluster}.

\begin{theorem}[General BCFW cells give tiles] \label{thm:BCFW-tile-and-sign-description}
	Let 
		$S_\rcp$ be a general BCFW cell with recipe
		$\rcp$.  Then
	for all $Z \in \Mat_{n,k+4}^{>0}$, $\tZ$ is injective on 
		$S_\rcp$ and thus $\gt{\rcp}$ is a tile. 
		In particular,
		given $Y \in \tZ(S_\rcp)$,  the unique preimage of $Y$ in $S_\rcp$ is given by (the rowspan of) of the twistor matrix $\twmt_\rcp(Y)$. 
		Moreover,
		$$\gto{\rcp}= \{Y \in \Gr_{k, k+4}: \czeta_i^\rcp(Y)>0 \text{ for all coordinate functionaries of }S_\rcp\}.$$
\end{theorem}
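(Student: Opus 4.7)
The natural plan is to induct on the number of step-tuples in the recipe $\rcp$. The base case is the trivial recipe on $N$: here $k=0$, the cell $S_{\rcp^{\triv}_N} = \Gr^{\scriptscriptstyle\geq 0}_{0,N}$ is a single point, $\tZ$ is trivially injective, the twistor matrix is empty, and there are no coordinate functionaries to check. For the inductive step, I would case on the final step $\st$ of the recipe (in the sense of \cref{not:L-and-R}), and handle $\cyc$, $\refl$, $\pre_{I_k}$, and a BCFW product $(a_k,b_k,c_k,d_k,n_k)$ separately.

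The three ``symmetry'' cases should be essentially bookkeeping. If $\st = \cyc$ (resp.~$\refl$), then $S_{\rcp} = \cyc\, S_{\rcpp}$ (resp.\ $\refl\, S_{\rcpp}$), and the amplituhedron map intertwines these Grassmannian operations with the corresponding cyclic/reflection action on the rows of $Z$ (which preserves positivity up to sign). So injectivity and the sign description for $\rcpp$ pull back to the same statements for $\rcp$, with coordinate functionaries transported by $\cyc^{-*}$ or $\refl^*$ exactly as in \cref{def:generalcluster}. The $\pre_{I_k}$ case is even easier: inserting zero columns in $C$ means that the corresponding rows of $Z$ contribute nothing to $CZ$, so we can restrict to the complementary submatrix $Z' \in \Mat_{N\setminus I_k, k+4}^{>0}$ and apply the inductive hypothesis; coordinate functionaries are unchanged.

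The main case is the BCFW product $\st = (a,b,c,d,n)$ with $S_{\rcp} = S_L \bcfw S_R$, and this is where the serious work lies. Given $Y \in \tZ(S_{\rcp})$, the structure of the butterfly plabic graph (\cref{fig:butterfly}) tells us that any representing matrix $C$ decomposes into contributions from matrices $C_L$ representing a point of $S_L$ and $C_R$ representing a point of $S_R$, glued along the five columns indexed by $B = (a,b,c,d,n)$. The key to the inductive step is to construct, from $Y$ and $Z$, modified matrices $Z_L \in \Mat_{N_L, k_L+4}^{>0}$ and $Z_R \in \Mat_{N_R, k_R+4}^{>0}$ and points $Y_L \in \tZ_L(S_L)$, $Y_R \in \tZ_R(S_R)$ whose preimages, provided by the inductive hypothesis, reassemble into the unique preimage $C$ of $Y$. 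Here the product promotion map $\Psi_B$ of \cref{def:product_promotion} (and its rescaled version $\rPsi_B$) enters in an essential way: by \cref{thm:promotion2} it sends Pl\"ucker coordinates on $\widehat{\Gr}_{4,N_L} \times \widehat{\Gr}_{4,N_R}$ to cluster variables on $\widehat{\Gr}_{4,n}$, and on the level of twistor coordinates this substitution is exactly the identity that equates $\llrr{\cdot}$-values at $Y$ with $\llrr{\cdot}$-values at $Y_L, Y_R$ after the appropriate row replacements. This is precisely what allows the recursive definition of $\twmt_\rcp(Y)$ to reproduce $C$, and what matches $\czeta_i^\rcp(Y)$ with the (rescaled) coordinate functionaries of $S_L$ and $S_R$ at $Y_L, Y_R$.

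Given such a decomposition, the inductive hypothesis applied to $\rcp_L$ and $\rcp_R$ yields unique preimages $C_L \in S_L$ and $C_R \in S_R$, and positivity of all coordinate functionaries at $Y$ is equivalent to positivity for $Y_L$ and $Y_R$ (up to the positive rescaling factors coming from $\mathcal{T}'$), which by the inductive hypothesis is equivalent to $C_L \in S_L^\circ$, $C_R \in S_R^\circ$. Reassembling via the butterfly gives the unique $C \in S_\rcp$ with $\tZ(C) = Y$, proves injectivity, and yields the claimed semialgebraic description of $\gto{\rcp}$. The main obstacle I anticipate is constructing $Y_L, Y_R, Z_L, Z_R$ cleanly and verifying the positivity of $Z_L, Z_R$: one must show that the ``effective'' $Z$-data seen by each sub-recipe is itself a positive matrix over its index set, which requires careful manipulation of the exchange relations $\lr{a\,c\,d\,n}$, $\lr{a\,b\,c\,d}$, $\lr{b\,c\,d\,n}$, $\lr{a\,b\,c\,n}$ appearing in $\Psi_B$, together with the inequalities guaranteeing that the preimages live in the open cells. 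Everything else amounts to checking that the recursive definitions of $\twmt_\rcp$ and $\czeta_i^\rcp$ in \cite{even2023cluster} are precisely calibrated so that this reassembly works.
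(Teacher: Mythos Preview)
This theorem is not proved in the present paper; it is quoted from the companion paper as \cite[Theorem~7.7]{even2023cluster}, and only used here as background. So there is no proof in this paper to compare against directly. That said, from the way this paper invokes the companion proof (see the proof of \cref{prop:standard_facets}, Step~3), one can infer that the argument in \cite{even2023cluster} is indeed an induction on the recipe, treating $\pre$, $\cyc$, $\refl$ as routine and the BCFW product as the substantive step, relying on \cite[Theorem~11.3]{even2023cluster} for the behavior of functionaries under promotion and on the coindependence of $\{b,c,d,n\}$ for $S_R$. In that sense your overall plan matches.

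The one place your sketch diverges from what the companion paper appears to do is the BCFW product step. You propose to manufacture genuinely positive matrices $Z_L\in\Mat_{N_L,k_L+4}^{>0}$ and $Z_R\in\Mat_{N_R,k_R+4}^{>0}$ and points $Y_L,Y_R$ so as to literally apply the inductive hypothesis to smaller amplituhedra. You correctly flag positivity of $Z_L,Z_R$ as the obstacle, and it is a real one: the natural candidates involve the vectors $(ba)\cap(cdn)$ and $(dc)\cap(abn)$ from \cref{def:product_promotion}, and arranging all maximal minors to be positive over $N_L$ and $N_R$ is not automatic. The companion paper sidesteps this entirely: rather than building new positive $Z$'s, it works inside the fixed ambient $\Gr_{k,k+4}$, defines the twistor matrix $\twmt_\rcp(Y)$ recursively, and verifies directly (via the promotion identities and \cite[Lemma~11.13, Proposition~11.15]{even2023cluster}) that it lands in $S_\rcp$ and maps back to $Y$, with the coordinate functionaries $\czeta_i^\rcp(Y)$ recovering the BCFW parameters. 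This avoids any need to certify positivity of auxiliary $Z$-matrices. Your route may be workable, but it introduces an extra verification that the paper's approach does not require.
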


For functionaries, we can introduce a similar notation as for the chain polyonmials in \cref{eq1:quadratic}:
\begin{equation}\label{eq:quadratic}
\llrr{a\,b\,c \br d\,e \br f\,g\,h} \;=\; 
\llrr{a\,b\,c\,d}\,\llrr{e\,f\,g\,h} - \llrr{a\,b\,c\,e}\,\llrr{d\,f\,g\,h}. 
	\end{equation}
More generally, we define \emph{chain functionaries} of degree~$s+1$ to be the polynomials obtained from \cref{eq:chainpols} by replacing Pl\"ucker coordinates $\lr{I}$ by twistor coordinates $\llrr{I}$. See \cite[Definition 2.19]{even2023cluster}.

\begin{example}[Coordinate functionaries] \label{ex:coord_func}
The coordinate functionaries for $S_{\rcp}$ in \cref{fig:bcfw_tile} are:

\setlength{\tabcolsep}{2pt}
\hspace{-1em} 
\begin{tabular}{|c|| c| c| c|c|c|}
 \hline
 $i$ & $\alpha_i(Y)$ & $\beta_i(Y)$  & $\gamma_i(Y)$ & $\delta_i(Y)$ & $\epsilon_i(Y)$\\ [0.5ex] 
 \hline \hline
 $1$ & $ \scriptstyle \frac{\llrr{7\,8\,9 \br 4 \,3 \br 9\,A\,B}}{\llrr{3\,4\,9\,A}}$ & $\scriptstyle -\frac{\llrr{6\,8\,9 \br 4\,3 \br 9 \,A \, B}}{\llrr{4\, 9 \, A \, B}}$ & $ \scriptstyle  \frac{\llrr{9\,A\,B \br 3\,4 \br 6\, 7 \br 8\,9 \br 3 \,4 \, 5}}{\llrr{3\,4\,5 \,8} \llrr{4\, 9 \, A \, B}}$ & $
    \scriptstyle -\frac{\llrr{6\,7\,8 \br 4\,5 \br 8\, 9 \br 3\,4 \br 9 \,A \, B}}{\llrr{4\,5\,8 \,9} \llrr{4\, 9 \, A \, B}}$ & $\llrr{6\,7\,8\,9}$ \\
 \hline
 $2$ & $\scriptstyle -\frac{\llrr{5\,8\,9 \br 4 \,3 \br 9\,A\,B}}{\llrr{4\,9\,A\,B}}$ & $
    \llrr{3\,4\,8\,9}$ & $
    \llrr{3\,4\,5\,9}$ & $
   -\llrr{3\,4\,5\,8}$ & $
 \llrr{4\,5\,8\,9}$ \\
 \hline 
 $3$ & $\scriptstyle -\frac{\llrr{1\,2\,C \br B \,A \br 3\,4\,9}}{\llrr{3\,4\,9\,B}}$ & $ \scriptstyle -\frac{\llrr{1\,3\,C \br A\,B \br 3\,4\,9}}{\llrr{3\,4\,9\,B}}$ & $ \scriptstyle \frac{\llrr{2\,3\,C \br A \,B \br 3\,4\,9}}{\llrr{3\,4\,9\,B}}$ & $ \scriptstyle -\frac{\llrr{1\,2\,3 \br B \,A \br 3\,4\,9}}{\llrr{3\,4\,9\,B}}$ & $
 \llrr{1\,2\,3\,C}$\\
  \hline
 $4$ & $-\llrr{3 \, 9\,A \,B}$ & $ \llrr{4 \, 9\,A \,B} $ & $\llrr{3 \, 4 \,9\,A}$ & $ -\llrr{3 \, 4\,9 \,B}$ & $ \llrr{3 \, 4\,A \,B}$\\
  \hline 
\end{tabular}

See \cite[Example 7.2]{even2023cluster} for more details.
\end{example}

For a standard BCFW tile $\gt{D}$, we call the coordinate cluster variables \emph{domino cluster variables} or simply \emph{domino variables}, and denote them as $\xx(D)=\{\ralpha_i, \rbeta_i, \rgamma_i, \rdelta_i, \repsilon_i\ \vert \ 1 \leq i \leq k\}$. See
\cite[Theorem 8.4]{even2023cluster} for explicit formulas for the domino variables. The formulas have different cases depending on whether certain chords are head-to-tail siblings, same-end parent and child, or sticky parent and child (cf. terminology in \cref{cd-terminology}).

\begin{example}[Domino cluster variables] 
\label{domino-variables-formulas}
The domino cluster variables $\xx(D)$ for the chord diagram $D$ in \cref{cd-example} are as follows. We will denote $(10,11,12,13,14,15)$ as $(A,B,C,D,E,F)$. \vspace{1em} \\
\setlength{\tabcolsep}{2pt}

\begin{tabular}{|c|| c| c| c|c|c|}
 \hline
 $i$ & $\ralpha_i$ & $\rbeta_i$  & $\rgamma_i$ & $\rdelta_i$ & $\repsilon_i$\\ [0.5ex] 
 \hline \hline
 $1$ & $\lr{4\,5\,6\,|\,2\,1\,|\,8\,9\,F}$ & $\lr{3\,5\,6\,|\,2\,1\,|\,8\,9\,F}$ & $\lr{F\,8\,9\,|\,2\,1\,|\,3\,4\,|\,5\,6\,|\,8\,9\,F}$ & $\lr{3\,4\,5\,|\,2\,1\,|\,8\,9\,F}$ & $\lr{3\,4\,5\,6}$ \\
 \hline
 $2$ & $\lr{6\,8\,9\,F}$ & $\lr{5\,8\,9\,F}$ & $\lr{F\,1\,2\,|\,5\,6\,|\,8\,9\,F}$ & $\lr{5\,6\,8\,|\,2\,1\,|\,8\,9\,F}$ & $\lr{5\,6\,8\,9}$ \\
 \hline
 $3$  & $\lr{2\,8\,9\,F}$ & $ \lr{1\,8\,9\,F}$ & $ \lr{F\,1\,2\,|\,8\,9\,|\,D\,E\,F}$ & $ \lr{1\,2\,8\,F}$ & $ \lr{1\,2\,8\,9}$\\
  \hline
 $4$ & $\lr{B\,C\,D\,|\,9\,8\,|\,D\,E\,F}$ & $ \rbeta_4 = \ralpha_5$ & $ \lr{8\,9\,A\,B}$ & $ \lr{9\,A\,B\,C}$ & $ \lr{A\,B\,C\,D}$\\
  \hline
$5$ & $\lr{A\,C\,D\,|\,9\,8\,|\,D\,E\,F}$ & $ \lr{8\,9\,C\,D}$ & $ \lr{8\,9\,A\,D}$ & $ \lr{8\,9\,A\,C}$ & $ \lr{9\,A\,C\,D} $ \\
\hline
 $6$ &  $\lr{9\,D\,E\,F}$ & $\lr{8\,D\,E\,F}$ & $\lr{8\,9\,E\,F}$ & $\lr{8\,9\,D\,F}$ & $\lr{8\,9\,D\,E}$\\
  \hline
\end{tabular}
\vspace{1em} \\
See \cite[Example 8.5]{even2023cluster} for more details.

\end{example}

\begin{definition}[Mutable and frozen domino variables]\label{def:frozmut}
Let $D \in \mathcal{CD}_{n,k}$ be a chord diagram, corresponding to a
standard BCFW tile $Z_D$ in $\A_{n,k,4}(Z)$.
Let $\AFacet(Z_D)$ denote
the  following collection of domino cluster variables:
        \begin{itemize}
                \itemsep0.25em
                \item
                $\ralpha_i$ unless $D_i$ has a sticky child
                \item
                 $\rbeta_i$ unless $D_i$ starts where another chord ends or $D_i$ has a same-end sticky parent. 
\item   $\rgamma_i$ in all cases. 
   \item
                $\rdelta_i$ unless $D_i$ has a same-end child.
                \item
                $\repsilon_i$ unless $D_i$ has a same-end child.
        \end{itemize}
Let $\Mut(Z_D)$ denote the complementary set of domino variables, i.e. $\Mut(Z_D)=\xx(D) \setminus \AFacet(Z_D)$.
\end{definition}

\begin{remark}\label{rmk:8_2}
One can show (see \cite[Remark 8.2]{even2023cluster}
)
that if $D_i$ has a same-end sticky parent $D_p$, then $\rbeta_i=\ralpha_p$.    
\end{remark}

\begin{example}[Mutable and frozen domino variables]
Let $Z_D$ be the tile with the chord diagram~$D$ from 
\cref{cd-example} and domino variables as in \cref{domino-variables-formulas}. Among those, the mutable variables
are:
$$ \ralpha_5,\, \ralpha_6,\, \rbeta_2,\, \rbeta_4,\, \rbeta_6,\, \rdelta_3,\, \rdelta_5,\, \repsilon_3,\, \repsilon_5 \;\in\; \Mut(Z_D). $$
Hence
$\AFacet(Z_D)$ consists of the
remaining $21$ domino variables. Note that $\ralpha_5 = \rbeta_4$ by \cref{rmk:8_2}.
\end{example}

\begin{definition}[The seed $\Sigma_D$ of a BCFW tile $Z_D$]
\label{def:seed}
Let $D \in \CD$ be a chord diagram,
and $\gt{D}$ the corresponding BCFW tile.
We define a seed $\Sigma_D=(\xx(D), Q_D)$
as follows.
The extended cluster $\xx(D)$ has the sets $\Mut(Z_D)$ of mutable cluster variables and $\AFacet(D)$ of frozen variables (recall \cref{def:frozmut}).
To obtain the quiver $Q_D$, we consider each chord $D_i$ in turn, 
check if it satisfies any 
of the conditions in the table below, and if so, we draw the corresponding arrows.
\vspace{0.5em}
\begin{center}
\begin{tabular}{|c|c|c|c|}
\hline
\rotatebox{90}{Condition \quad\;}&
\tikz[line width=1,scale=1]{
\def\r{1}
\draw (1*\r,0) -- (1.5*\r,0);
\draw[dashed] (1.5*\r,0) -- (2.75*\r,0);
\draw (2.75*\r,0) -- (3.5*\r,0);
\draw[dashed] (3.5*\r,0) -- (5*\r,0);
\draw (5*\r,0) -- (5.5*\r,0);
\foreach \i in {6,7}{
\def\x{\i/2*\r}
\draw (\x,-0.1)--(\x,+0.1);
}				
\foreach \i/\j in {2/5.833,6.166/10}{
\def\x{\i/2*\r+0.25*\r}
\def\y{\j/2*\r+0.25*\r}
\draw[line width=1,-stealth] (\x,0) -- (\x,0.15) to[in=90,out=90] (\y,0.15) -- (\y,0);}
\node at(2.2*\r,1*\r) {$j$};
\node at(4.3*\r,1*\r) {$i$};
}&
\tikz[line width=1,scale=1]{
\draw (0.75,0) -- (1.25,0);
\draw[dashed] (1.25,0) -- (2.25,0);
\draw (2.25,0) -- (2.75,0);
\draw[dashed] (2.75,0) -- (4,0);
\draw (4,0) -- (5,0);
\foreach \i in {8.5,9.5}{
\def\x{\i/2}
\draw (\x,-0.1)--(\x,+0.1);
}
\foreach \i/\j in {1/4.5+0.066, 2.5/4.5-0.066}{
\def\x{\i}
\def\y{\j}
\draw[line width=1,-stealth] (\x,0) -- (\x,0.15) to[in=90,out=90] (\y,0.15) -- (\y,0);}
\node at(2.4,0.6) {$j$};
\node at(2.7,1.5) {$i$};
}&
\tikz[line width=1,scale=1]{
\draw (0.5,0) -- (1.75,0);
\draw[dashed] (1.75,0) -- (3.25,0);
\draw (3.25,0) -- (3.75,0);
\draw[dashed] (3.75,0) -- (4.75,0);
\draw (4.75,0) -- (5.25,0);
\foreach \i in {1.5,2.5,3.5}{
\def\x{\i/2}
\draw (\x,-0.1)--(\x,+0.1);
}
\foreach \i/\j in {1/5, 1.5/3.5}{
\def\x{\i}
\def\y{\j}
\draw[line width=1,-stealth] (\x,0) -- (\x,0.15) to[in=90,out=90] (\y,0.15) -- (\y,0);}
\node at(3,1) {$j$};
\node at(2,1.5) {$i$};
}\\
& head-to-tail left sibling $D_j$
& same-end child $D_j$
& sticky child $D_j$
\\[0.5em]
\hline
\rotatebox{90}{\quad Arrows \quad\;\;}&
\tikz[line width=0.75,scale=1,minimum size=18pt,inner sep=0pt, outer sep=0pt,fill=lightgray!25]{
\node        (c1) at (5,5) {$\rgamma_j$};
\node        (d1) at (6,5) {$\rdelta_j$};
\node        (a2) at (5,4) {$\ralpha_i$};
\node[fill,draw,circle] (b2) at (6,4) {$\rbeta_i$};
\path[very thick,->] (b2) edge (d1);
\path[very thick,->] (c1) edge (b2);
\path[very thick,->] (b2) edge (a2);
}
&
\tikz[line width=0.75,scale=1,minimum size=18pt,inner sep=0pt, outer sep=0pt,fill=lightgray!25]{
\node        (c5) at (12,2) {$\rgamma_i$};
\node[fill,draw,circle] (d5) at (13,2) {$\rdelta_i$};
\node[fill,draw,circle] (e5) at (15,2) {$\repsilon_i$};
\node        (c4) at (12,1) {$\rgamma_j$};
\node        (d4) at (13,1) {$\rdelta_j$};
\node        (e4) at (15,1) {$\repsilon_j$};
\path[very thick,->] (e5) edge (d5);
\path[very thick,->] (d5) edge (c5);
\path[very thick,->] (c4) edge (d5);
\path[very thick,->] (d4) edge (e5);
\path[very thick,->] (d5) edge (d4);
\path[very thick,->] (e5) edge (e4);
}
&
\tikz[line width=0.75,scale=1,minimum size=18pt,inner sep=0pt, outer sep=0pt,fill=lightgray!25]{
\node[fill,draw,circle] (a5) at (9,2) {$\ralpha_i$};
\node (b5) at (10,2) {$\rbeta_i$};
\node (e5) at (6.5,2) {$\repsilon_i$};
\node (a4) at (10,1) {$\ralpha_j$};
\node (e4) at (6.5,1) {$\repsilon_j$};
\node[gray] (text) at (7.5,2.5) {if same-end};
\path[very thick,->,dotted] (a5) edge (e5);
\path[very thick,->] (e4) edge (a5);
\path[very thick,->] (b5) edge (a5);
\path[very thick,->] (a5) edge (a4);
}
\\[0.5em]
\hline
\end{tabular}
\end{center}
\vspace{0.5em}
If $D_i$ has sticky same-end child $D_j$ then the dotted arrow from $\ralpha_i$ to $\repsilon_i$ appears, along with the usual arrows of the ``sticky'' and ``same-end'' cases. In view of \cref{rmk:8_2}, 
in this case $\ralpha_i$ stands also for $\rbeta_j$ as they are equal.   
\end{definition}

\begin{figure}[h]
\begin{center}
\tikz[line width=1,scale=1]{
\draw (0.5,0) -- (15.5,0);
\foreach \i in {1,2,...,15}{
\def\x{\i}
\draw (\x,-0.1)--(\x,+0.1);
\node at (\x,-0.5) {\i};}
\foreach \i/\j in {1/8, 3/4.8, 5.2/7.75, 8.25/13, 9/12.2, 10/11.8}{
\def\x{\i+0.5}
\def\y{\j+0.5}
\draw[line width=1.5,-stealth] (\x,0) -- (\x,0.25) to[in=90,out=90] (\y,0.25) -- (\y,0);
}
\node at(1.5,1.5) {$D_3$};
\node at(3.5,0.875) {$D_1$};
\node at(5.75,1) {$D_2$};
\node at(9,1.5) {$D_6$};
\node at(9.5,1) {$D_5$};
\node at(10.5,0.8125) {$D_4$};
}
\\
\vspace{1em}
\includegraphics[width=0.5\textwidth]{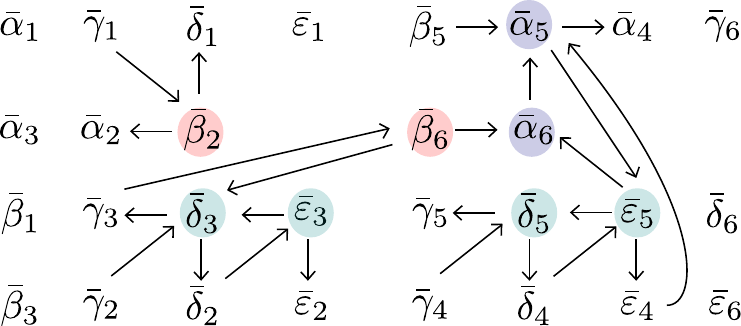}
\end{center}

\caption{The seed $\Sigma_D$ associated to the chord diagram $D$ above (also in \cref{cd-example}). The variables $\xx(D)$
	 are as in \cref{domino-variables-formulas}. 
	 The mutable variables $\Mut(Z_D)$ are circled; 
	 the other variables are the frozen variables $\AFacet(Z_D)$.
	 The colors (red, green, blue) indicate the different cases of \cref{def:seed}. 
\label{cd-example-again}}
\vspace{1em}
\end{figure}

\begin{example}[Seed of a standard BCFW tile]
The seed $\Sigma_D$ from \cref{cd-example-again} is built from \cref{def:seed} by applying the rules for the following conditions. Head-to-tail left siblings: $(i,j) \in \{(2,1),(6,3)\}$; same-end child: $(i,j) \in \{(3,2),(5,4)\}$; sticky child: $(i,j) \in \{(6,5),(5,4)\}$.
\end{example}

\cref{thm:quiver}
appears as Theorems 9.10 in
\cite{even2023cluster}.
	\begin{theorem}[The seed of a standard BCFW tile  is a subseed of a $\Gr_{4,n}$ seed]
	\label{thm:quiver}
Let $D \in \CD$. The seed $\Sigma_D=(\xx(D), Q_D)$ is a subseed of a seed for $\Gr_{4,n}$.
Hence every cluster
variable (respectively, exchange relation) of 
$\Acal(\Sigma_D)$ is a cluster variable (resp., exchange relation) for
$\Gr_{4,n}$.

\end{theorem}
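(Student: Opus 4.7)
The plan is to proceed by induction on the number of chords $k$ of $D$, following the recursive construction of the standard BCFW cell in \cref{def:standardfromCD}. When $k=0$, the seed $\Sigma_D$ is empty, so it is vacuously a subseed of any seed for $\Gr_{4,N}$ and the base case is immediate.

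For the inductive step, there are two cases corresponding to (1a) and (1b) of \cref{def:standardfromCD}. In case (1a), $D$ is obtained from a subdiagram $D'$ on $N \setminus \{p\}$ by inserting a zero column at a penultimate position $p$; since $\xx(D)$ and $Q_D$ depend only on the chord structure, which is unchanged, we have $\Sigma_D = \Sigma_{D'}$ as abstract seeds, and by induction $\Sigma_{D'}$ embeds as a subseed of some seed $\widetilde{\Sigma}'$ for $\Gr_{4, N\setminus\{p\}}$. It will then suffice to extend $\widetilde{\Sigma}'$ to a seed for $\Gr_{4,N}$ by adjoining the appropriate new boundary frozens involving column $p$, while preserving the subseed inclusion.

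The heart of the argument is case (1b), where $D$ arises from a BCFW product $D_L \bcfw D_R$ with butterfly indices $B=(a,b,c,d,n)$ and rightmost top chord $D_k = (a,b,c,d)$. By induction there exist seeds $\widetilde{\Sigma}_L, \widetilde{\Sigma}_R$ for $\Gr_{4,N_L}, \Gr_{4,N_R}$ containing $\Sigma_{D_L}, \Sigma_{D_R}$ as subseeds. The plan is to apply the rescaled product promotion $\rPsi_B$ of \cref{def:rPsi} to the disjoint union $\widetilde{\Sigma}_L \sqcup \widetilde{\Sigma}_R$; by \cref{thm:promotion2}, $\rPsi_B$ is a cluster quasi-homomorphism, so up to Laurent monomials in $\mathcal{T}'$ its image is a seed for $\Gr_{4,n}^\circ$. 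Using the explicit formulas in \cite[Theorem 8.4]{even2023cluster} for domino variables, one identifies $\rPsi_B(\xx(D_L)) \cup \rPsi_B(\xx(D_R))$ with the domino variables of $D$ indexed by chords $i \neq k$. One then adjoins the five new variables $\ralpha_k, \rbeta_k, \rgamma_k, \rdelta_k, \repsilon_k$, which are Pl\"ucker coordinates in $\{a,b,c,d,n\}$ and hence Grassmannian cluster (or frozen) variables, and, if necessary, performs a short sequence of mutations to reach a seed whose induced subquiver on $\xx(D)$ is $Q_D$.

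The main obstacle will be matching the quiver $Q_D$ prescribed by \cref{def:seed} with the quiver obtained inductively. Arrows internal to $Q_{D_L}$ and $Q_{D_R}$ are preserved because $\rPsi_B$ is a cluster quasi-homomorphism, so the delicate point is the arrows incident to the variables of $D_k$. These depend on whether $D_k$ has a head-to-tail left sibling (the rightmost top chord of $D_L$), a same-end child, or a sticky child (in $D_R$), and must be verified by a case analysis tracking how the exchange relation at $\rgamma_k = \lr{abdn}$ and the promoted exchange relations at neighboring vertices interact with the butterfly. Once each local configuration is checked, the inductive step closes and the subseed property, together with the consequence that cluster variables and exchange relations of $\A(\Sigma_D)$ lift to those of $\Gr_{4,n}$, follows from the general fact that a subseed of a Grassmannian seed generates a subalgebra whose cluster combinatorics is inherited from the ambient cluster algebra.
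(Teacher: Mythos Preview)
This theorem is not proved in the present paper; it is stated as background and cited from the companion paper \cite{even2023cluster} (their Theorem~9.10). Your inductive strategy along the recursion of \cref{def:standardfromCD}, transporting seeds through the BCFW product via the quasi-homomorphism $\rPsi_B$ of \cref{thm:promotion2}, is indeed the approach taken there.

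That said, the proposal stops short at the two places where the actual work lies. First, \cref{thm:promotion2} only guarantees that $\rPsi_B$ sends a cluster of $\C[\widehat{\Gr}_{4,N_L}^{\circ}]\times\C[\widehat{\Gr}_{4,N_R}^{\circ}]$ to a \emph{sub}-cluster of $\C[\widehat{\Gr}_{4,n}^{\circ}]$, not to a seed; the promoted collection has strictly fewer variables than a $\Gr_{4,n}$ seed. Completing it to a full ambient seed $\widetilde{\Sigma}_D$ that simultaneously contains the five Pl\"ucker coordinates $\ralpha_k,\dots,\repsilon_k$ is not automatic: one must exhibit such a seed explicitly and connect it by a mutation sequence to a known seed (in \cite{even2023cluster} this is the content of an explicit sequence, referenced here in the commented remark after \cref{claim:facetsgenBCFW} as ``the sequence of \cite[9.30]{even2023cluster} which realizes product promotion''). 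Second, you correctly flag the quiver-matching as ``the main obstacle'' but then defer it to an unspecified ``case analysis.'' That verification---showing that the induced subquiver on $\xx(D)$ inside the ambient $\Gr_{4,n}$ seed coincides with the combinatorially defined $Q_D$ of \cref{def:seed}, in each of the head-to-tail, same-end, and sticky configurations---is precisely the substantive content of the theorem. (In case (1a) your phrasing ``adjoining the appropriate new boundary frozens'' is also imprecise: extending from $\Gr_{4,N\setminus\{p\}}$ to $\Gr_{4,N}$ adds mutable variables as well as frozens.) As written, the proposal is a correct outline of the strategy but not a proof.
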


The following theorem characterizes the open BCFW tile
$\gto{D}$ in terms of \emph{any} extended cluster of $\Acal(\Sigma_D)$. It generalizes \cref{cor:cluster-sign-description} for standard BCFW tiles and it appears as Theorem 9.11 in \cite{even2023cluster}.
\begin{theorem}
	[Positivity tests for standard BCFW tiles]
	\label{thm:sign-definite}
Let $D \in \CD$. 
Using \cref{not:pluckfunc}, every cluster and frozen variable $x$ in 
 $\A(\Sigma_D)$ is such that $x(Y)$ has a definite sign $s_x \in \{1, -1\}$ on the open BCFW tile $\gto{D}$, and
 \begin{equation}
\gto{D} = \{Y \in \Gr_{k, k+4}: s_x \cdot x(Y)>0 \text{ for all }x \text{ in any fixed extended cluster of } \A(\Sigma_D)\}.
\end{equation}
\end{theorem}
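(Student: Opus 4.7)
The plan is to deduce the theorem from three ingredients: (a) the base case \cref{cor:cluster-sign-description}, (b) the subseed property of \cref{thm:quiver}, and (c) the identification of $\gto{D}$ with the positive part of a cluster variety with initial seed $\Sigma_D$ from \cref{prop:birat-to-torus}.

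We begin with the base case.  Comparing \cref{def:seed} to \cref{def:standardfromCD} and \cref{def:generalcluster}, the extended cluster $\xx(D)$ of $\Sigma_D$ consists of the domino variables, which are precisely the coordinate cluster variables $\Irr(\rcp(D))$ for the recipe $\rcp(D)$ from \cref{rem:recipechord}.  Hence \cref{cor:cluster-sign-description} gives the theorem directly for $\xx(D)$: every domino variable has a definite sign $s_x$ on $\gto{D}$, and $\gto{D}=\{Y:s_x\, x(Y)>0\text{ for all }x\in\xx(D)\}$.

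For an arbitrary extended cluster $\tilde{\xx}$ of $\Acal(\Sigma_D)$, we argue by induction on the mutation distance from $\xx(D)$.  The inductive step amounts to showing that any single mutation $x_0\mapsto x_0'$ with exchange relation $x_0 x_0'=M_++M_-$ preserves both sign-definiteness and the sign description.  By \cref{thm:quiver} this exchange relation is also one among cluster variables of $\Gr_{4,n}$, so $x_0'$ defines a bona fide functionary via \cref{not:pluckfunc}.  The factors of $M_\pm$ have definite signs on $\gto{D}$ by the inductive hypothesis, so $M_\pm$ themselves carry overall signs $\sigma_\pm\in\{\pm1\}$ on $\gto{D}$.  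The crucial subclaim is $\sigma_+=\sigma_-$; granting it, $M_++M_-$ is nowhere vanishing with definite sign on $\gto{D}$ and, together with the nonvanishing of $x_0$, forces $x_0'$ to be nonvanishing, hence sign-definite by the connectedness of $\gto{D}$ (as the continuous image of the connected cell $S_D$).  The sign description then propagates: the exchange relation converts $s_{x_0}x_0>0$ into $s_{x_0'}x_0'>0$ with the remaining sign conditions unchanged, so both extended clusters cut out the same semialgebraic locus, namely $\gto{D}$.

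The main obstacle is the sign-matching subclaim $\sigma_+=\sigma_-$, and this is where \cref{prop:birat-to-torus} enters: it realizes $\gto{D}$ as the positive part of a cluster variety with seed $\Sigma_D$ via a birational isomorphism under which each cluster variable $x\in\Acal(\Sigma_D)$ pulls back to the positive quantity $s_x\, x$ on the cluster torus.  Under this identification mutations become standard positive exchange relations, so both monomials of $x_0 x_0'=M_++M_-$ pull back to strictly positive quantities, immediately yielding $\sigma_+=\sigma_-$.  This closes the induction and establishes the sign description for every extended cluster of $\Acal(\Sigma_D)$.
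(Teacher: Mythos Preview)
Your overall architecture is correct and matches the actual argument: start from \cref{cor:cluster-sign-description} for the initial extended cluster $\xx(D)$, then induct on mutation distance, with the key step being the sign-matching $\sigma_+=\sigma_-$ for the two monomials of each exchange relation. (Note that this paper does not prove \cref{thm:sign-definite}; it is quoted from \cite[Theorem~9.11]{even2023cluster}.) The gap is in how you obtain $\sigma_+=\sigma_-$.

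You appeal to \cref{prop:birat-to-torus}, but that result concerns the \emph{tile seed} $\check{\Sigma}_D$ of \cref{def:tilevars}, not $\Sigma_D$. The two seeds are genuinely different: $\check{\Sigma}_D$ is obtained from $\Sigma_D$ by deleting a subset of the frozen $\rgamma_i$ and replacing every remaining domino variable $\rzeta_i$ by a signed Laurent monomial $\tzeta_i=\pm\,\rzeta_i\cdot m(\rzeta_i)$ in those $\rgamma_i$. Thus \cref{prop:birat-to-torus} identifies $\gto{D}$ with the positive part of the cluster variety for $\check{\Sigma}_D$, and what follows immediately is positivity of the two monomials in each exchange relation of $\A(\check{\Sigma}_D)$, not of $\A(\Sigma_D)$. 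To transport this back to $\Sigma_D$ you would need to show that the rescaling $\rzeta_i\mapsto\tzeta_i$ preserves exchange ratios (i.e.\ is a signed quasi-homomorphism). But checking that the sign corrections on the two monomials of an exchange relation in $\Sigma_D$ agree already \emph{requires} knowing that those two monomials have the same sign on $\gto{D}$---so this route circles back to the statement you are trying to prove.

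The way \cite{even2023cluster} actually closes the induction is by establishing $\sigma_+=\sigma_-$ for the \emph{initial} seed directly: one writes out the finitely many exchange relations attached to the mutable vertices of $\Sigma_D$ (cf.\ \cref{def:seed}) and verifies case by case that both monomials carry the same sign on $\gto{D}$. This is \cite[Proposition~9.27]{even2023cluster}, and it is precisely the ingredient invoked in this paper in the proof of \cref{lem:mutable_vars_give_codim_2}. Once the initial seed is a \emph{signed seed} in this sense, the standard mutation rule for $\hat{y}$-variables shows that positivity of exchange ratios propagates under mutation, and your induction goes through. In short: the induction framework is right, but the crucial sign-matching for $\Sigma_D$ must be verified directly for the initial seed, and \cref{prop:birat-to-torus} does not supply it.
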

The signs of the domino variables in \cref{thm:sign-definite} are given by \cite[Proposition 8.10]{even2023cluster}.

\begin{example}[Positivity test for a standard BCFW tiles] \label{ex:stdBCFW_postest}
    For the tile $Z_D$ with chord diagram $D$ in \cref{cd-example-again} and $\xx(D)$ as in \cref{domino-variables-formulas}:
    \begin{equation*}
        \gto{D} = \{Y \in \Gr_{6,10}: s_x \cdot x(Y)>0 \text{ for all }x \in \xx(D)\},
    \end{equation*}
    where the signs $s_x$ are negative if $x$ is among:
$ \ralpha_2,\; \ralpha_3,\;  
\ralpha_5 = 
\rbeta_4,\;  \rbeta_1,\;  \rbeta_6,\;  \rgamma_2,\;  \rdelta_1,\;  \rdelta_5,\;  \rdelta_6. $
Otherwise, $s_x$ is positive.
\end{example}

The following result appears as \cite[Theorem 7.16]{even2023cluster}.
\begin{theorem}[Cluster adjacency for general BCFW tiles]
	\label{thm:clusteradjacency}
	Let $\gt{\rcp}$ be a general BCFW tile of 
	$\Ank$. Each facet $\gt{S}$ of $\gt{\rcp}$ lies on a hypersurface cut out by a functionary $F_S(\llrr{I})$ such that $F_S(\lr{I}) \in \Irr(\rcp)$. Thus $\{F_S(\lr{I}): \gt{S} \text{ a facet of }\gt{\rcp}\}$ consists  of compatible cluster variables of $\Gr_{4,n}$.
\end{theorem}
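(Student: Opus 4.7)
The plan is to establish the theorem in two logically independent stages: first, that each facet of $\gt{\rcp}$ is cut out by (the vanishing of) some element of $\Irr(\rcp)$ read as a functionary; second, that the full set $\Irr(\rcp)$ consists of compatible cluster variables for $\Gr_{4,n}$. Together these give the statement since the facet functionaries form a subset of $\{x(Y) : x \in \Irr(\rcp)\}$.

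For the first stage, the starting point is the sign description in \cref{cor:cluster-sign-description}: the open tile $\gto{\rcp}$ is the semialgebraic locus in $\Gr_{k,k+4}$ where every coordinate functionary $x(Y)$ with $x \in \Irr(\rcp)$ has a fixed definite sign $s_x$. Consequently, $\partial \gt{\rcp}$ is contained in the union of the hypersurfaces $\{x(Y)=0\}$ over $x \in \Irr(\rcp)$, and each facet must lie generically in at least one of these. To identify precisely which, I would invoke the facet characterizations to be established in \cref{prop:standard_facets} (standard case) and \cref{claim:facetsgenBCFW} (general case), which match each facet $\gt{S}$ with a specific irreducible functionary $F_S(\llrr{I})$ where $F_S(\lr{I}) \in \Irr(\rcp)$. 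Irreducibility of $F_S$ is what ensures $\{F_S(\llrr{I})=0\}$ really is a hypersurface containing the facet (and not a proper subvariety).

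For the second stage, I would induct on the number of step-tuples in $\rcp$. The base case (trivial recipe, $k=0$) holds vacuously since $\Irr(\rcp) = \emptyset$. For the inductive step, consider the final step $\st$ of $\rcp$. If $\st = (a,b,c,d,n)=:B$, then by \cref{def:generalcluster}
\[\Irr(\rcp) = \rPsi_B(\Irr(\rcp_L)) \cup \rPsi_B(\Irr(\rcp_R)) \cup \{\lr{b\,c\,d\,n}, \lr{a\,c\,d\,n}, \lr{a\,b\,d\,n}, \lr{a\,b\,c\,n}, \lr{a\,b\,c\,d}\}.\]
By induction, $\Irr(\rcp_L)$ and $\Irr(\rcp_R)$ are compatible cluster variables in $\C[\widehat{\Gr}_{4,N_L}^\circ]$ and $\C[\widehat{\Gr}_{4,N_R}^\circ]$ respectively, so their disjoint union is compatible in the product cluster algebra. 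By \cref{thm:promotion2}, $\rPsi_B$ sends this set to a collection of pairwise compatible cluster variables of $\C[\widehat{\Gr}_{4,n}^\circ]$, all lying in a common seed (up to Laurent monomials in $\mathcal{T}'$). One then checks that the five Plücker coordinates on $B$ above, which are precisely the elements of $\mathcal{T}'$ together with $\lr{a\,b\,c\,d}$, are compatible with the image of $\rPsi_B$ and with each other in that same seed. If $\st \in \{\cyc, \refl\}$, then $\Irr(\rcp)$ is the image of $\Irr(\rcpp)$ under a cluster algebra automorphism of $\C[\widehat{\Gr}_{4,n}^\circ]$, so compatibility is preserved; if $\st = \pre_{I_k}$, the variables are unchanged (viewed in $\Gr_{4,n}$), and compatibility is inherited from the smaller Grassmannian.

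The main obstacle in this plan is Stage 1 for general BCFW tiles: matching each facet to the specific element of $\Irr(\rcp)$ cutting it out requires careful recursive bookkeeping through all four operations (BCFW product, $\cyc$, $\refl$, $\pre$), and verifying that the candidate functionary $F_S$ is both irreducible and vanishes to order exactly one on the facet. This is the content of \cref{claim:facetsgenBCFW}, whose proof follows the template of \cref{prop:standard_facets} but becomes technical because recipes may interleave cyclic shifts, reflections, and zero-column insertions between BCFW products, so that the inductive identification of facets with cluster variables must be carried through each intermediate step.
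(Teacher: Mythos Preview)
Your plan has the right ingredients, but there is a logical circularity and a misreading of what the theorem actually demands.

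First, this paper does not prove \cref{thm:clusteradjacency}; it is imported from the companion paper \cite[Theorem 7.16]{even2023cluster} and then \emph{used} in the proofs of \cref{prop:standard_facets} (explicitly, in the second sentence of that proof) and \cref{claim:facetsgenBCFW}. So your Stage~1 plan to invoke those two results in order to establish \cref{thm:clusteradjacency} reverses the paper's logical dependencies and is circular as written.

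Second, the circularity is unnecessary because you have over-read the statement. The theorem only asserts that each facet lies on $\{x(Y)=0\}$ for \emph{some} $x\in\Irr(\rcp)$; it does not ask you to name which one. Your own opening move in Stage~1 already suffices: from \cref{cor:cluster-sign-description} one has $\gt{\rcp}\setminus\gto{\rcp}\subseteq\bigcup_{x\in\Irr(\rcp)}\{x(Y)=0\}$, and any facet $\gt{S'}$ is by \cref{def:facet2} contained in $\partial\gt{\rcp}$. Since $\gt{S'}=\overline{\tZ(S')}$ is the closure of the image of an irreducible positroid cell, its Zariski closure is irreducible of codimension~$1$, and an irreducible set contained in a finite union of closed hypersurfaces lies in one of them. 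That finishes Stage~1 with no forward reference. The later results \cref{prop:standard_facets} and \cref{claim:facetsgenBCFW} are refinements that single out \emph{which} elements of $\Irr(\rcp)$ actually occur as facet functionaries, and they legitimately use \cref{thm:clusteradjacency} as input.

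Your Stage~2 induction is the right idea and matches how compatibility is established in \cite{even2023cluster}, via \cref{thm:promotion2}. One small correction: $\lr{a\,b\,c\,d}$ already belongs to $\mathcal{T'}$; the fifth Pl\"ucker not in $\mathcal{T'}$ is $\lr{a\,b\,d\,n}$. The substantive step you still need is that these five Pl\"uckers sit in the target seed together with the image of $\rPsi_B$; this is part of the explicit seed construction underlying \cref{thm:promotion2} in the companion paper.
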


\section{Facets of BCFW tiles}\label{sec:facets}

The main goal of this section is 
to prove 
 \cref{prop:standard_facets}, which characterizes the facets 
of standard  BCFW tiles; this proof is in 
 \cref{sec:standardfacets} and \cref{sec:trans}.
Then in \cref{sec:BCFWfacets}
we also state (without proof) a characterization of the
facets of general BCFW tiles.

\subsection{Facets of standard BCFW tiles}\label{sec:standardfacets}

\begin{theorem}[Frozen variables as facets]\label{prop:standard_facets}
Let $D \in \mathcal{CD}_{n,k}$ be a chord diagram, corresponding to a
standard BCFW tile $Z_D$ in $\Ank$.
Then for each cluster variable $\rzeta_i\in \AFacet(Z_D)$ (cf. \cref{def:frozmut})
there is a unique facet of $\gt{D}$
which lies in the zero locus of the functionary $\rzeta_i(Y)$; the plabic graph
of this facet is constructed in \cref{thm:showingreduced}.  
Moreover, for 
any $Z$, there are no other facets of $\gt{D}$.
\end{theorem}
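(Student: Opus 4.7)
The strategy is to prove the two halves of the statement separately: (A) for each frozen cluster variable $\rzeta_i \in \AFacet(Z_D)$, I will exhibit a facet of $\gt{D}$ contained in the hypersurface $\{\rzeta_i(Y)=0\}$, and (B) I will show that no facet of $\gt{D}$ lies on the zero locus of any other cluster variable, in particular of any mutable variable in $\Mut(Z_D)$. The existence part is a recursive plabic-graph construction; the exhaustiveness part is a uniform consequence of the cluster structure on $\gt{D}$ set up in \cite{even2023cluster}.

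For Part (A), the facet on which $\rzeta_i(Y)$ vanishes will be realized as $\tZ(S_{\rzeta_i})$, where $S_{\rzeta_i} \subset \overline{S_D}$ is the positroid cell whose plabic graph is produced case by case in \cref{thm:showingreduced}. The case split will follow \cref{def:frozmut}: it depends on the type of domino variable ($\ralpha_i, \rbeta_i, \rgamma_i, \rdelta_i$, or $\repsilon_i$) and on the combinatorial relationship between $D_i$ and its parent, children, and siblings. Three verifications will be needed. First, that $S_{\rzeta_i}$ has codimension one in $\overline{S_D}$; this can be read off from the modified plabic graph via the standard dimension count for positroid cells. Second, that the functionary $\rzeta_i(Y)$ vanishes identically on $\tZ(S_{\rzeta_i})$. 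When $\rzeta_i$ is a single Pl\"ucker coordinate, this is immediate from the matroid of $S_{\rzeta_i}$; for the chain-polynomial cases arising from product promotion, I will unpack $\rzeta_i$ using \cref{def:generalcluster,def:product_promotion} and check that every monomial in the resulting polynomial contains a Pl\"ucker factor which vanishes on the left/right BCFW subfactors of $S_{\rzeta_i}$. Third, that $\tZ$ is injective on $S_{\rzeta_i}$, so $\overline{\tZ(S_{\rzeta_i})}$ has codimension one in $\gt{D}$; this is argued by noting that each $S_{\rzeta_i}$ is built, up to $\cyc$, $\refl$, and $\pre$, from BCFW products of smaller BCFW-type cells, so that the inverse-twistor-matrix formula of \cref{thm:BCFW-tile-and-sign-description} continues to recover the unique preimage of each point of $\tZ(S_{\rzeta_i})$.

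For Part (B), I will combine \cref{thm:clusteradjacency} and \cref{thm:sign-definite}. The former says every facet of $\gt{D}$ lies on $\{F(Y)=0\}$ for some cluster variable $F$ of $\Gr_{4,n}$, and the latter sharpens this so that $F$ may be chosen from any fixed extended cluster of $\Acal(\Sigma_D)$; choosing $\xx(D)$, every facet lies on $\{x(Y)=0\}$ for some $x \in \Mut(Z_D) \cup \AFacet(Z_D)$. Facets associated to frozen variables are exactly the ones constructed in Part (A), and uniqueness follows because the matroid of the facet is forced by the vanishing of $\rzeta_i$. To rule out a mutable $\mu \in \Mut(Z_D)$, consider its exchange relation $\mu\mu' = M_1 + M_2$ in $\Acal(\Sigma_D)$. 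By \cref{thm:sign-definite} applied to both the cluster containing $\mu$ and the adjacent cluster containing $\mu'$, both $\mu$ and $\mu'$ have definite signs on $\gto{D}$; the positivity of the exchange relation (after multiplying each cluster variable by its sign $s_x$ to put $\gto{D}$ into positive real form) then forces $M_1$ and $M_2$ to share a common sign on $\gto{D}$. Consequently, on any boundary component where $\mu = 0$, the identity $M_1 + M_2 = 0$ combined with the same-sign condition forces $M_1 = 0$ and $M_2 = 0$ simultaneously, so the component is contained in $\{M_1 = 0\} \cap \{M_2 = 0\}$, which has codimension at least two. Hence $\{\mu(Y)=0\}$ cannot contribute any facet of $\gt{D}$.

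The principal technical obstacle is the case analysis in Part (A): the plabic-graph modification, the codimension count, and the vanishing of the chain polynomial must be verified for each sub-case of \cref{def:frozmut}, using the explicit domino-variable formulas of \cite[Theorem 8.4]{even2023cluster} together with the product-promotion recursion. Part (B), by contrast, is a short and uniform consequence of the cluster structure \cref{thm:quiver} together with the positivity tests \cref{thm:sign-definite}, and is independent of the combinatorial type of the chord.
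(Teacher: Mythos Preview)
Your high-level architecture matches the paper's: construct a facet for each frozen variable via a plabic-graph modification (\cref{thm:showingreduced}), and rule out mutable variables via the exchange relation. The exchange-relation idea in your Part~(B) --- that $\mu\mu'=M_1+M_2$ with $M_1,M_2$ of the same sign on $\gto{D}$ forces both monomials to vanish wherever $\mu$ does --- is exactly the paper's argument (their \cref{lem:mutable_vars_give_codim_2}, invoking \cite[Proposition~9.27]{even2023cluster}). However, there are two genuine gaps.

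\textbf{The codimension-two step is not free.} You assert that $\{M_1=0\}\cap\{M_2=0\}$ meets $\gt{D}$ in codimension at least two, but this is precisely the nontrivial analytic content the paper isolates as \cref{lem:mutable_vars_give_codim_2} and proves in a separate subsection. The tile $\gt{D}$ is a $4k$-dimensional semialgebraic set inside $\Gr_{k,k+4}$, and the zero loci of two distinct irreducible functionaries need not intersect it transversally for an arbitrary $Z$. The paper handles this by first applying Thom's parametric transversality theorem (\cref{thm:thom}) in the $B$-amplituhedron model to show the intersection has codimension~$\geq 2$ for almost every $Z$ (\cref{lem:high_codim_promotes_to_high_codim}), and then uses a compactness/limit argument together with the containment \eqref{eq:containment_2_skel} to upgrade ``almost every $Z$'' to ``every $Z$.'' Without this, your Part~(B) does not close.

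\textbf{Boundary membership is missing from Part~(A).} Injectivity of $\tZ$ on $S_{\rzeta_i}$ gives $\dim \tZ(S_{\rzeta_i})=4k-1$, but it does not by itself place $\tZ(S_{\rzeta_i})$ inside $\partial\gt{D}$ rather than inside the open tile. The paper supplies this via \cref{lem:maps_to_bdry}, whose proof relies on the disjointness of standard BCFW open tiles from \cite{even2021amplituhedron} and on the fact that hypersurfaces $\{\llrr{i,i+1,j,j+1}=0\}$ meet $\Ank$ only in $\partial\Ank$. Relatedly, the paper splits the frozen variables into ``interior'' facets (handled by the results of \cite[Section~7]{even2021amplituhedron}) and ``boundary twistor'' facets (handled by \cref{thm:twistorvanish,thm:showingreduced}); your proposal treats all cases uniformly and so would need to reproduce the interior-facet analysis as well. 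Finally, your uniqueness claim (``the matroid of the facet is forced by the vanishing of $\rzeta_i$'') is too quick: the paper's uniqueness argument is inductive on the BCFW recursion and again leans on \cref{lem:mutable_vars_give_codim_2} to exclude cells on which two cluster variables vanish.
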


We need several lemmas in order to 
prove \cref{prop:standard_facets}.
The first two are consequences of the Cauchy-Binet formula for the twistors (see, e.g., \cite[Lemma 2.16]{even2023cluster}). 
We recall the notion of \emph{coindependence} (\cite[Definition 5.5]{even2023cluster})
\begin{definition}\label{def:coind}
Let $V \in \Gr^{\geq 0}_{k,n}$. A subset $I \subseteq [n]$ is \emph{coindependent for $V$} if $V$ has a nonzero Pl\"ucker coordinate $\lr{J}_V$, such that $J \cap I = \emptyset$. If $k=0$ we declare all subsets to be coindependent. If $S$ is a positroid cell in $ \Gr^{\geq 0}_{k,n}$, then $J$ is \emph{coindependent for} $S$ if $J$ is coindependent for the elements of $S$. 
\end{definition}

\begin{lemma}\label{lem:coin1}
Let $I=\{i_1,\dots,i_m\} \in {[n] \choose m}$.
If       $\llrr{CZ, Z_{i_1}, \dots ,Z_{i_m}}\neq 0$, then 
 $I$ must be coindependent for $C \in  \Grk$.
\end{lemma}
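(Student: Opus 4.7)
The plan is to prove this by a direct application of the Cauchy--Binet formula, essentially the standard manipulation already referenced in \cite[Lemma 2.16]{even2023cluster}.

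First I would unfold the definition of the twistor coordinate. Let $C$ be a $k\times n$ matrix representative (with rows $c_1,\dots,c_k$) of the point in $\Grk$, so that $CZ$ has rows $y_j=\sum_\ell C_{j\ell}Z_\ell$. Then by \cref{def:tw_coords},
\[
\llrr{CZ,Z_{i_1},\dots,Z_{i_m}}
\;=\;
\det\!\begin{pmatrix} CZ \\ Z_I \end{pmatrix},
\]
where $Z_I$ denotes the $m\times(k+m)$ submatrix of $Z$ with rows indexed by $I=\{i_1,\dots,i_m\}$ in the given order.

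Next I would factor the $(k+m)\times(k+m)$ matrix inside the determinant as
\[
\begin{pmatrix} CZ \\ Z_I \end{pmatrix}
\;=\;
\begin{pmatrix} C \\ E_I \end{pmatrix} Z,
\]
where $E_I$ is the $m\times n$ matrix with $(E_I)_{j,i_j}=1$ and all other entries zero. Applying Cauchy--Binet to this product of a $(k+m)\times n$ matrix with an $n\times(k+m)$ matrix yields
\[
\llrr{CZ,Z_I}
\;=\;
\sum_{J\in\binom{[n]}{k+m}}
\det\!\begin{pmatrix} C \\ E_I \end{pmatrix}_{\!J}\,\lr{J}_Z.
\]
The key observation is that the minor $\det\bigl(\begin{smallmatrix}C\\ E_I\end{smallmatrix}\bigr)_{\!J}$ vanishes unless $I\subseteq J$: the bottom $m$ rows $E_I$ are supported in columns $I$, so otherwise those rows of the selected submatrix would not span an $m$-dimensional space. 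When $I\subseteq J$, writing $J=I\sqcup K$ with $K\in\binom{[n]\setminus I}{k}$ and expanding along the bottom $m$ rows gives $\det\bigl(\begin{smallmatrix}C\\ E_I\end{smallmatrix}\bigr)_{\!J}=\pm\lr{K}_C$. Thus
\[
\llrr{CZ,Z_I}
\;=\;
\sum_{K\in\binom{[n]\setminus I}{k}}(\pm 1)\,\lr{K}_C\,\lr{I\cup K}_Z.
\]

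Finally, to conclude: if the left-hand side is nonzero, then some summand on the right is nonzero, so there exists $K\in\binom{[n]\setminus I}{k}$ with $\lr{K}_C\neq 0$. Since $K\cap I=\emptyset$, this is precisely the definition of $I$ being coindependent for $C$ (\cref{def:coind}), finishing the proof. I do not anticipate any real obstacle here; the only minor bookkeeping is tracking the sign in the bottom-row expansion, which is irrelevant to the nonvanishing conclusion.
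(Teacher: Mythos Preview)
Your proof is correct and follows essentially the same approach as the paper: both arguments rest on the Cauchy--Binet expansion of the twistor coordinate, with the paper simply citing the resulting formula from \cite[Lemma 2.16]{even2023cluster} while you spell out the derivation explicitly. The conclusion that a nonzero twistor forces some Pl\"ucker $\lr{K}_C\neq 0$ with $K\cap I=\emptyset$ is identical in both.
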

\begin{proof}
If $\llrr{CZ, Z_{i_1}, \dots ,Z_{i_m}}\neq 0$, then 
 by the second equation of \cite[Lemma 2.16]{even2023cluster}, there must be 
some $J$ such that
$\lr{J}_C \neq 0$ and $J \cap I = \emptyset$.
This means that $I$ is coindependent for $C$.
\end{proof}

\begin{definition}[{\cite[Definition 11.1]{even2023cluster}}]
We say that functionary $F$ has a \emph{strong sign} on a positroid cell $S$ if there exists an expansion of $F(\tZ(C)),$ for $C\in S,$ as a sum of monomials in the Pl\"ucker coordinates of $C$ and the minor determinants of $Z$ all of whose coefficients have the same sign.    
\end{definition}

\begin{lemma}\label{lem:coin2}
Let $I \in {[n] \choose 4}$, and let $S$ be a cell of $\Grk$.
Suppose that $\llrr{I}$ has a strong sign on $\gto{S}$, but for some
cell $S' \subset \overline{S}$, we have
$\llrr{I} = 0$ on $Z_{S'}$.  Then for each
$J \in {[n] \choose k}$ disjoint from $I$, we must have
$\lr{J}_C=0$ for all $C\in S'$.  In other words,
$I$ is not coindependent for $S'$.
\end{lemma}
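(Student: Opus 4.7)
The plan is to combine the Cauchy-Binet expansion of the pulled-back twistor with the positivity of $Z$'s minors and $C$'s Pl\"ucker coordinates. By \cite[Lemma 2.16]{even2023cluster}, we have the polynomial identity
\[
\llrr{I}(CZ) \;=\; \sum_{J \in \binom{[n]}{k},\ J \cap I = \emptyset} \epsilon(J, I)\, \lr{J}_C\, \lr{J \cup I}_Z,
\]
where $\epsilon(J, I) \in \{\pm 1\}$. The strategy will be to show that on the open cell $S$, all the coefficients $\epsilon(J, I)$ that actually appear (i.e.\ those for which $J$ lies in the positroid of $S$ and is disjoint from $I$) must share a common sign $\sigma$, and then to use this uniform sign to force each term to vanish individually on $S'$.

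First I would verify that the strong sign hypothesis forces the $\epsilon(J,I)$ for $J \in \mathrm{positroid}(S)$ with $J \cap I = \emptyset$ to all have the same sign. The point is that the Cauchy-Binet expansion above, restricted to such $J$, is already a valid expansion of $\llrr{I}(CZ)$ as a sum of distinct monomials $\lr{J}_C\,\lr{J \cup I}_Z$ in Pl\"ucker coordinates of $C$ and maximal minors of $Z$. On the open cell $S$ and for any $Z \in \mathrm{Mat}_{n,k+4}^{>0}$, every such monomial evaluates strictly positively. If two coefficients $\epsilon(J_1, I), \epsilon(J_2, I)$ had opposite signs, then choosing $Z$ so that $\lr{J_1 \cup I}_Z$ dominates would make $\llrr{I}(CZ)$ take one sign, while choosing $Z$ with $\lr{J_2 \cup I}_Z$ dominant would make it take the other sign, contradicting the definite sign of $\llrr{I}$ on $\gto{S}$.

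Next I would push this to the subcell $S'$. Since $S' \subset \overline{S}$, the positroid of $S'$ is contained in that of $S$, so the common-sign property persists for $J \in \mathrm{positroid}(S')$ disjoint from $I$. Now fix $C$ in the open cell $S'$: then $\lr{J}_C > 0$ precisely when $J \in \mathrm{positroid}(S')$, while $\lr{J \cup I}_Z > 0$ always. After restriction, every remaining term $\epsilon(J, I)\,\lr{J}_C\,\lr{J \cup I}_Z$ has the common sign $\sigma$, and by hypothesis their sum $\llrr{I}(CZ)$ vanishes on $\gt{S'}$. A sum of same-signed nonzero reals cannot be $0$, so each term is zero; since $\lr{J \cup I}_Z > 0$, this forces $\lr{J}_C = 0$ for every $J \in \binom{[n]}{k}$ with $J \cap I = \emptyset$ and $C$ in the open cell $S'$. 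By the constancy of the positroid on $S'$, this means no such $J$ lies in the positroid of $S'$, i.e.\ $I$ is not coindependent for $S'$, as desired.

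The main obstacle will be the first step: rigorously turning ``strong sign on $\gto{S}$'' into ``all Cauchy-Binet coefficients $\epsilon(J, I)$ share a sign,'' since strong sign only asserts the existence of \emph{some} same-sign expansion, not that Cauchy-Binet itself is such. The argument sketched above varies $Z$ to isolate individual terms; one could alternatively argue by directly comparing two distinct same-sign expansions of the same polynomial, using that the monomials $\lr{J}_C\,\lr{J \cup I}_Z$ indexed by different $J$ are algebraically independent as polynomials in the matrix entries of $C$ and $Z$. Once this comparison is established, the rest of the proof is a straightforward sign-counting argument.
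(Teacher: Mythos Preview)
Your proposal is correct and follows essentially the same route as the paper: expand $\llrr{I}(CZ)$ via Cauchy--Binet, observe that the surviving terms (those with $J$ in the positroid of $S$ and $J\cap I=\emptyset$) all carry the same sign, and then on $S'$ use that a same-signed sum vanishing forces each term to vanish. The paper's proof simply asserts the first step (``all nonzero terms \dots\ must have the same sign'') as a direct consequence of the strong-sign hypothesis, whereas you spell out a justification by varying $Z$ toward boundary strata of $\Mat_{n,k+4}^{>0}$ where a single Pl\"ucker $\lr{J\cup I}_Z$ dominates; this is a reasonable way to make that step rigorous and does not change the overall strategy.
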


\begin{proof}
Since 
$\llrr{I}$ has a strong sign on $Z_S$, all nonzero terms of \cite[Lemma 2.16]{even2023cluster}, which necessarily come from $J$ for which $J$ and $I$ are disjoint, must 
have the same sign.
Since $\llrr{I} = 0$ on $Z_{S'}$, all the above nonzero terms must 
vanish when we go to the cell $S'$ in the boundary of $S$.
	But this means that all Pl\"ucker coordinates $\lr{J}$, with 
	$J$ disjoint from $I$, must vanish on $S'$.
\end{proof}

\begin{lemma}\label{lem:coin3}
Let $S_L \subset {\Gr}^{\ge0}_{k_L, N_L}$ and $S_R \subset {\Gr}^{\ge0}_{k_R, N_R}$ be positroid cells, with plabic graphs $G_L$ and $G_R$.
Let $G = G_L \bcfw G_R$.
 If 
$\{a, b, n\}$ fails to be coindependent for $S_L$
or
$\{b,c,d,n\}$ fails to be coindependent for $S_R$, then 
for each $I \in \binom{\{a,b,c,d,n\}}{4}$,
	we have $\llrr{I}=0$ on $\gt{G}$.
\end{lemma}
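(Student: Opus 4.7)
The plan is to reduce to a combinatorial claim about the positroid $\mathcal{M}(G)$ and then verify it by analyzing almost perfect matchings of the butterfly. By \cref{lem:coin1} applied to each $I \in \binom{\{a,b,c,d,n\}}{4}$, it suffices to show that no basis $J$ of $\mathcal{M}(G) = \mathcal{M}(G_L \bcfw G_R)$ is disjoint from $I$. Equivalently, writing $x := \{a,b,c,d,n\} \setminus I$, the goal is to rule out any basis $J$ of $\mathcal{M}(G)$ with $J \cap \{a,b,c,d,n\} \subseteq \{x\}$.

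I would then work at the level of almost perfect matchings. Every almost perfect matching $M$ of $G$ decomposes uniquely as $M = M_L \sqcup M_B \sqcup M_R$, where $M_L, M_R, M_B$ are the restrictions to $G_L$, $G_R$, and the butterfly subgraph on $\{a,b,c,d,n\}$ respectively. Since these three subgraphs share no internal vertices, each piece is itself an almost perfect matching of its own subgraph, so $\partial M_L$ is a basis of $\mathcal{M}_L$ and $\partial M_R$ is a basis of $\mathcal{M}_R$. Next I would enumerate the (small) list of almost perfect matchings of the butterfly, recording in each case the set $\partial M_B \subseteq \{a,b,c,d,n\}$, together with the compatibility conditions forced at the shared boundary vertices $b$ and $n$ on the pieces $\partial M_L \cap \{b,n\}$ and $\partial M_R \cap \{b,n\}$. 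This produces a short table relating $\partial M \cap \{a,b,c,d,n\}$ to $\partial M_L \cap \{a,b,n\}$, $\partial M_R \cap \{b,c,d,n\}$ and the butterfly pattern.

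With the table in hand, the proof concludes by a case analysis over $x \in \{a,b,c,d,n\}$. Reading off the table in each case, one sees that requiring $\partial M \cap \{a,b,c,d,n\} \subseteq \{x\}$ forces at least one of $\partial M_L \cap \{a,b,n\} = \emptyset$ or $\partial M_R \cap \{b,c,d,n\} = \emptyset$: intuitively, the only way the butterfly together with the two side matchings can ``miss'' four of the five special indices is for one of the two sides to contribute nothing there. Since $\partial M_L$ and $\partial M_R$ are bases of $\mathcal{M}_L$ and $\mathcal{M}_R$, either conclusion produces a basis witnessing coindependence of $\{a,b,n\}$ for $S_L$ or of $\{b,c,d,n\}$ for $S_R$, contradicting the hypothesis.

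The main obstacle is the combinatorial bookkeeping for the butterfly, particularly the consistent treatment of the shared vertices $b$ and $n$ and the color compatibility at $a, c, d$: one must carefully verify in each butterfly matching which of the shared vertices is covered by $M_B$ as opposed to $M_L$ or $M_R$, so that the inferred contribution to $\partial M \cap \{a,b,c,d,n\}$ is correct. Once the table of butterfly patterns is assembled, the five-case verification is essentially mechanical and the required contradictions follow immediately.
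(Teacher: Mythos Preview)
Your approach via almost perfect matchings is essentially equivalent to the paper's approach via perfect orientations, and the decomposition $M = M_L \sqcup M_B \sqcup M_R$ is sound. However, there is a genuine logical gap in the final step.

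The hypothesis of the lemma is a \emph{disjunction}: either $\{a,b,n\}$ fails to be coindependent for $S_L$, \emph{or} $\{b,c,d,n\}$ fails to be coindependent for $S_R$. To contradict this, you must establish that \emph{both} sets are coindependent, i.e.\ that $\partial M_L \cap \{a,b,n\} = \emptyset$ \emph{and} $\partial M_R \cap \{b,c,d,n\} = \emptyset$. Your stated conclusion, that ``at least one of $\partial M_L \cap \{a,b,n\} = \emptyset$ or $\partial M_R \cap \{b,c,d,n\} = \emptyset$'' holds, does not contradict the hypothesis: if, say, $\{a,b,n\}$ fails for $S_L$ while $\{b,c,d,n\}$ happens to be coindependent for $S_R$, your ``at least one'' could be witnessed by the $S_R$ side and no contradiction arises.

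The fix is that the butterfly enumeration actually yields the stronger conclusion: whenever $\partial M \cap \{a,b,c,d,n\} \subseteq \{x\}$, the butterfly matching $M_B$ is forced to cover all of the connector vertices on both sides, so \emph{both} $\partial M_L \cap \{a,b,n\} = \emptyset$ and $\partial M_R \cap \{b,c,d,n\} = \emptyset$. This is exactly what the paper establishes, phrased in the dual language of perfect orientations: putting sinks at any four of the five outer vertices forces a unique orientation of the butterfly, and that orientation always directs edges into $G_L$ at $a,b,n$ and into $G_R$ at $b,c,d,n$ simultaneously. The paper's formulation has the advantage of avoiding a case split on $x$, since the forced edges into $G_L$ and $G_R$ are the same regardless of which outer vertex is the source.
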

\begin{proof}
We will prove the contrapositive.
Suppose that 
for some $I \in \binom{\{a,b,c,d,n\}}{4}$,
we have $\llrr{I}\neq 0$ on $Z_{S_G}$.
Then by \cref{lem:coin1}, $I$ must be coindependent
for the cell $S_G$.  Then by \cite[Remark 5.6]{even2023cluster}, 
the plabic 
graph $G$ must have a perfect orientation $\O$ where
all boundary vertices in $I$ are sinks.  But now it is a simple
exercise to check that if in the graph $G_L \bcfw G_R$ which appears in \cref{fig:extra1} (ignoring the arrows) 
we put sinks at the
(outer) boundary vertices $I$, then there is a \emph{unique}
way to complete this to a perfect orientation of the 
``butterfly'' portion of the graph.  And in particular, 
this orientation will include the directed edges shown in 
\cref{fig:extra1}.  But then the perfect orientation $\O$,
restricted to $G_L$ and $G_R$, must have sinks
 at vertices $a,b,n$ of $G_L$, and at vertices $b,c,d,n$ of $G_R$.
	But then $\{a,b,n\}$ 
	and $\{b,c,d,n\}$ 
	must be coindependent for $S_L$
	and $S_R$, respectively.
\begin{figure}[h]
\centering
\includegraphics[height=2in]
{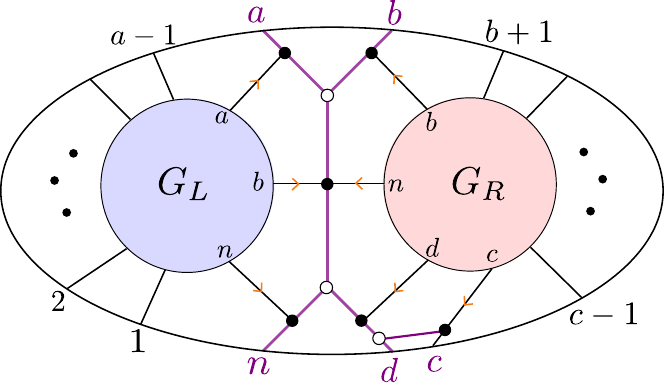}
\caption{}
\label{fig:extra1}
\end{figure}
\end{proof}

\begin{lemma}\label{lem:maps_to_bdry}
For every cell $S \subseteq \partial S_D$ 
in the boundary of a standard BCFW cell $S_D,$
\[\gt{S}\subseteq\partial\gt{D}.\]  
So $\gto{D}$ is the interior of $\gt{D}$ and 
$\partial \gto{D} = \partial \gt{D} = \tZ(\partial S_D)$.
\end{lemma}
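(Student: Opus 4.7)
The plan is to establish two key facts: (1) $\gto{D} = \tZ(S_D)$ is open in $\Gr_{k,k+4}$; and (2) $\tZ(\partial S_D) \cap \gto{D} = \emptyset$. Together these yield all conclusions of the lemma. Fact (1) is immediate from the sign description $\gto{D} = \{Y \in \Gr_{k,k+4} : \czeta_i^{D}(Y) > 0 \text{ for all coordinate functionaries of }S_D\}$ given by \cref{thm:BCFW-tile-and-sign-description}, which exhibits $\gto{D}$ as an open semialgebraic subset. Equivalently, since $\tZ|_{S_D}$ is an injective smooth map between manifolds of the same dimension $4k = \dim\Gr_{k,k+4}$, invariance of domain gives the openness of its image.

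For (2), I argue by contradiction. Suppose $C' \in \partial S_D$ with $Y := \tZ(C') \in \gto{D}$. By \cref{thm:BCFW-tile-and-sign-description}, $Y$ has a unique preimage $C_0 = \twmt_D(Y) \in S_D$, and $C_0 \neq C'$ since $C' \in \partial S_D$. Choose a sequence $C_n \in S_D$ with $C_n \to C'$ (using density of $S_D$ in $\overline{S_D}$); by continuity of $\tZ$, $Y_n := \tZ(C_n) \to Y$, and by injectivity on $S_D$, $C_n = \twmt_D(Y_n)$. The twistor matrix $\twmt_D$ is a rational function of the coordinate functionaries whose denominators are products of such functionaries, which by (1) are nonvanishing on the open set $\gto{D}$ containing $Y$. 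Hence $\twmt_D$ is continuous in a neighborhood of $Y$, giving $C_n = \twmt_D(Y_n) \to \twmt_D(Y) = C_0$. Combined with $C_n \to C'$, this forces $C' = C_0 \in S_D$, a contradiction.

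Given (1) and (2), the conclusions follow. For any cell $S \subseteq \partial S_D$, $\gt{S} = \tZ(\overline{S}) \subseteq \tZ(\overline{S_D}) = \gt{D}$; since $S_D$ is open in $\overline{S_D}$, $\overline{S} \subseteq \partial S_D$, so (2) yields $\gt{S} \cap \gto{D} = \emptyset$. Moreover, $\gto{D}$ is open in $\Gr_{k,k+4}$, while $\gt{D} \setminus \gto{D} \subseteq \tZ(\partial S_D)$ is a semialgebraic set of dimension strictly less than $4k = \dim \Gr_{k,k+4}$ and therefore has empty interior. Hence $\gto{D}$ is the interior of $\gt{D}$, $\partial\gto{D} = \partial\gt{D} = \tZ(\partial S_D)$, and in particular $\gt{S} \subseteq \partial \gt{D}$. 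The main technical obstacle is the continuity of $\twmt_D$ near the boundary point $Y$ in (2); this reduces to verifying that the denominators in the explicit twistor matrix formula from \cite[Theorem 7.7]{even2023cluster} are products of nonvanishing coordinate functionaries of $S_D$.
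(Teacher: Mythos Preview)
Your argument for fact (2) is a clean and natural idea, and it is correct: the continuity of the inverse map $\twmt_D$ on the open set $\gto{D}$ does force any boundary point of $S_D$ to map outside $\gto{D}$.  However, the final deduction has a genuine gap.  From (1) and (2) you correctly obtain $\gt{D}=\gto{D}\sqcup\tZ(\partial S_D)$ with $\gto{D}$ open and $\tZ(\partial S_D)$ of strictly smaller dimension, and hence $\gt{S}\subseteq\gt{D}\setminus\gto{D}$.  But the implication ``$\gto{D}$ open and $\gt{D}\setminus\gto{D}$ has empty interior $\Rightarrow$ $\gto{D}=\mathrm{int}(\gt{D})$'' is false in general.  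A simple counterexample: take $A=\R^{n}\setminus\{0\}$; then $\overline{A}=\R^n$, $\overline{A}\setminus A=\{0\}$ has empty interior, yet $\mathrm{int}(\overline{A})=\R^n\neq A$.  More to the point, one can realise exactly the situation you have set up --- a continuous map from a closed ball which is a homeomorphism on the open ball and sends the boundary sphere disjointly from the image of the interior --- using a Riemann map onto a slit disk; the image of the boundary sphere then covers the slit, whose interior points lie in $\mathrm{int}(\gt{D})$ but not in $\gto{D}$.  So your argument, as written, does not establish $\gt{S}\subseteq\partial\gt{D}$; it only shows $\gt{S}\cap\gto{D}=\emptyset$.

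The paper closes this gap by a completely different route: it reduces to facets $S$ of $S_D$ and invokes the structural classification from \cite{even2021amplituhedron} that every such facet is either shared with another standard BCFW cell $S_{D'}$ or lies on a hypersurface $\llrr{i,i+1,j,j+1}=0$.  In the first case, any neighbourhood of a point of $\gto{S}$ meets the \emph{open} set $\gto{D'}$, which is disjoint from $\gto{D}$ by the tiling theorem; since $\gt{D}\setminus\gto{D}$ has empty interior, that neighbourhood must exit $\gt{D}$.  In the second case, one uses that the zero locus of such a boundary twistor meets the amplituhedron only in $\partial\Ank$.  The key missing ingredient in your approach is precisely this ``witness'' open set on the other side of the facet; without it (or without some substitute, such as a regularity argument showing each $\{x=0\}$ actually separates $\gt{D}$ locally), you cannot rule out that $\tZ(\partial S_D)$ fills in interior points of $\gt{D}$.
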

\begin{proof} 
The second and third statements follows from the first, using 
	\cite[Corollary 11.17]{even2023cluster}.

We now focus on proving the first statement.
It is enough to prove it for facets, since images of boundary cells of higher codimensions are contained in the closure of the images of facets.  By \cite[Proposition 7.10]{even2021amplituhedron}, each facet $S$ of $S_D$ 
	  is either a facet of another BCFW cell $S_{D'}$ or its image $\gt{S}$ lies in the zero locus of a twistor coordinate $\llrr{i,i+1,j,j+1}$ for some $i, j$.

In the former case it follows that for every $p\in\gto{S},$ every open neighborhood of $p$ intersects both $\gto{D}$ and $\gto{D'}.$  
By \cite[Theorem 1.4]{even2021amplituhedron}, which shows that the images of different standard BCFW cells do not intersect, we have that
	$\gto{D}\cap\gto{D'}=\emptyset$. 
	Therefore $\gto{S}$ is indeed in the topological boundary of $\gt{D}.$

For the latter case, \cite[Proposition 8.1]{even2021amplituhedron} shows that the intersection of the hypersurface $\{\llrr{i,i+1,j,j+1}=0\}$ with $\Ank$ is contained in the topological boundary $\partial \Ank$. Hence if $\gt{S}$ lies on this hypersurface, $\gt{S}$ must also be contained in the topological boundary of $\gt{D}.$
\end{proof}

\begin{lemma}\label{lem:mutable_vars_give_codim_2}
	Let $D$ be a standard BCFW cell, and let $\xi_1,\xi_2 \in \Irr(D)$ be two different domino cluster  variables for $Z_D$. Then the intersection of zero loci of $\xi_1(Y),\xi_2(Y)$ (the natural identification between functionaries and homogenous polynomials in Pl\"ucker coordinates is explained in \cite[Notation 7.11]{even2023cluster}) meets $\gt{D}$ in codimension greater than $1$. It follows 
that for each mutable cluster variable $\xi \in \Mut(D)$, the zero locus of $\xi(Y)$ intersects $\gt{D}$ in codimension greater than one. 
\end{lemma}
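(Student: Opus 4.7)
The plan is to handle the two assertions of the lemma separately. The first (main) claim is proved by combining sign-definiteness of the coordinate cluster variables with an inductive analysis of the codimension-1 strata of $\partial\gt{D}$; the second claim is then deduced from the first by a cluster-exchange argument.

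\emph{First statement.} Fix distinct $\xi_1,\xi_2\in \xx(D)$. By \cref{cor:cluster-sign-description}, each functionary $\xi_i(Y)$ is sign-definite on $\gto{D}$, so $\{\xi_i(Y)=0\}\cap \gt{D}\subseteq \partial\gt{D}$. By \cref{lem:maps_to_bdry}, $\partial\gt{D}=\tZ(\partial S_D)$, and by \cite[Proposition~7.10]{even2021amplituhedron} each facet of $S_D$ is either (a)~shared with another standard BCFW cell $S_{D'}$, or (b)~has its $\tZ$-image contained in a twistor hyperplane $\{\llrr{i\,i{+}1\,j\,j{+}1}=0\}$. Suppose for contradiction that $\{\xi_1=0\}\cap\{\xi_2=0\}\cap \gt{D}$ has codim~$1$ in $\gt{D}$; then it contains an irreducible codim-$1$ component $F$ lying in the $\tZ$-image of some facet of $S_D$. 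I would induct on the number of chords $k$, the base case $k=0$ being vacuous. For the inductive step, use the rightmost top chord $\ctop=(a,b,c,d)$ to decompose $S_D$ as either $\pre_p S_{D'}$ or $S_L\bcfw S_R$; by \cref{def:generalcluster}, $\xx(D)$ consists of the five top Pl\"ucker variables $\ralpha_k,\rbeta_k,\rgamma_k,\rdelta_k,\repsilon_k$ together with rescaled product promotions $\rPsi(\rzeta)$ of variables of $D_L, D_R$. For a type~(a) facet coming from $S_L$ or $S_R$, translate the vanishing of $\xi_1, \xi_2$ on $F$ to the vanishing of the corresponding variables on the corresponding facet of $\gt{D_L}$ or $\gt{D_R}$, using \cref{thm:promotion2} together with \cref{lem:coin1,lem:coin2,lem:coin3} to control the pullback, and invoke the inductive hypothesis. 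For type~(b) facets, argue directly from the explicit form of the five top variables (and, recursively, of any other $\xx(D)$-element that restricts nontrivially to the hyperplane) that at most one element of $\xx(D)$ can vanish identically on that hyperplane intersected with $\gt{D}$.

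\emph{Second statement.} Let $\xi\in \Mut(D)$. By \cref{thm:quiver}, $\Sigma_D$ is a subseed of a seed for $\Gr_{4,n}$, so mutating $\Sigma_D$ at $\xi$ produces a cluster variable $\xi^*$ of $\Gr_{4,n}$ with exchange relation
\[\xi\,\xi^{*} \;=\; M_1 + M_2,\]
where $M_1, M_2$ are monomials in $\xx(D)\setminus\{\xi\}$. By \cref{thm:sign-definite}, $\xi, \xi^{*}$, and all variables appearing in $M_1, M_2$ are sign-definite on $\gto{D}$; therefore $M_1$ and $M_2$ are each sign-definite, and because $\xi\xi^{*}=M_1+M_2$ is sign-definite they must share the same sign. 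Now suppose the zero locus of $\xi(Y)$ meets $\gt{D}$ in codim~$1$, giving a codim-$1$ piece $F\subset \partial\gt{D}$. On $F$ we have $M_1+M_2=0$; by continuity and equal signs, this forces $M_1\equiv M_2\equiv 0$ on $F$. Since $M_1$ is a monomial in $\xx(D)\setminus\{\xi\}$, some factor $\xi''\neq \xi$ of $M_1$ vanishes on $F$. Then $\{\xi=0\}\cap\{\xi''=0\}\cap \gt{D}$ contains the codim-$1$ set $F$, contradicting the first statement.

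\emph{Main obstacle.} The technically hardest step is the case analysis in Part~1, specifically matching cluster variables in $\xx(D)$ with the inherited type~(a) facets of $\gt{D}$. This requires tracking, through the product-promotion recursion, exactly which $\xx(D)$-element pulls back to a given vanishing condition on a facet of $\gt{D_L}$ or $\gt{D_R}$, and handling the borderline combinatorics when the top chord is head-to-tail, same-end, or sticky with one of its neighbors, situations in which several of the top variables admit coincidences (see e.g.\ \cref{rmk:8_2}).
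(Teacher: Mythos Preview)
Your argument for the second statement is essentially identical to the paper's: mutate at $\xi$, use sign-definiteness of all cluster variables in $\A(\Sigma_D)$ to force the two exchange monomials to have the same sign, and conclude that a second domino variable must vanish wherever $\xi$ does. This part is fine.

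For the first statement, however, your route diverges sharply from the paper's and is not complete. The paper does \emph{not} proceed by induction on the number of chords or by a combinatorial case analysis of how the $\xi_i$ pull back through product promotion. Instead it proves a general transversality result (\cref{lem:high_codim_promotes_to_high_codim}): for \emph{any} two distinct irreducible functionaries, their common zero locus meets $\Ank$ in codimension $\geq 2$ for almost every positive $Z$. This is obtained by passing to the $B$-amplituhedron and applying Thom's parametric transversality theorem, with the family parameter being $W\in\Gr_{k+4,n}$. The paper then combines this with the strong-sign property of the domino variables (so vanishing on a boundary stratum $\gt{S}$ is $Z$-independent), a short topological path argument showing $\ZZ(\xi_1,\xi_2)\subseteq\bigcup_i\gt{S_i}$ where the $S_i$ are the facets on which only one cluster variable vanishes, and a compactness/limit step to pass from almost all $Z$ to all $Z$.

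Your inductive strategy, by contrast, requires exactly the case analysis you flag as the ``main obstacle'': controlling which elements of $\xx(D)$ can vanish on each type~(a) and type~(b) facet, tracking this through $\rPsi$, and handling mixed cases (one $\xi_i$ from $D_L$, the other from $D_R$ or from the top chord). You do not carry this out, and it is genuinely delicate---the head-to-tail/same-end/sticky degeneracies you mention cause several domino variables to coincide or to factor nontrivially under promotion. The paper's transversality argument sidesteps all of this in one stroke, at the cost of needing the ``almost all $Z$ $\Rightarrow$ all $Z$'' compactness step. So your Part~1 is a plausible but unfinished alternative; the missing idea relative to the paper is the use of parametric transversality in place of chord-by-chord bookkeeping.
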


The proof of Lemma \ref{lem:mutable_vars_give_codim_2} is postponed to the next subsection.

\begin{theorem}\label{thm:twistorvanish}
Let $S = S_L \bcfw S_R$ be a BCFW cell, and 
suppose $I \in {\{a,b,c,d,n\} \choose 4}$.
Then there is at most one 
facet $S'$ of $S,$ 
such that among the five twistor coordinates coming from
${\{a,b,c,d,n\} \choose 4}$, only 
$\llrr{I}$ vanishes on $Z_{S'}$.
To construct the potential facet, 
we start from the graph in \cref{fig:extra2} and 
remove the edge labeled by $x_{12}$ (respectively,
$x_{10}$, $x_{6}$, $x_{8}$, $x_1$), obtaining a graph 
$G^{(i)}$ corresponding to a cell $S^{(i)}$ (for $1 \leq i \leq 5$)
such that 
$\llrr{abcd}$ (respectively,  $\llrr{abdn}$,  
$\llrr{bcdn}$, $\llrr{acdn}$, $\llrr{abcn}$) 
is the unique twistor coordinate coming from ${\{a,b,c,d,n\} \choose 4}$
	which vanishes on 
	$\tilde{Z}(S^{(i)})$.
Moreover, we can realize the elements of $S^{(1)}$
using path matrices which have a row whose support
is precisely $\{a,b,c,d\}$ (and similarly for the other
$S^{(i)}$). 
	If $G^{(i)}$ is reduced, then $S^{(i)}$ is the desired facet $S'$.
\end{theorem}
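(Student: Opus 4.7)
I will organize the proof around three steps: (i) a reduction showing that any facet with the desired single-twistor vanishing must come from removing a single edge of the butterfly subgraph of $G_L \bcfw G_R$; (ii) an explicit identification, for each $I \in \binom{\{a,b,c,d,n\}}{4}$, of the unique candidate edge $x_j$ and a direct analysis of its path matrix; and (iii) a uniqueness argument using coindependence. Throughout I will lean heavily on \cref{lem:coin1}, \cref{lem:coin2}, and \cref{lem:coin3}.

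\emph{Step 1 (Reduction to butterfly edges).} Every facet $S'$ of the positroid cell $S = S_L \bcfw S_R$ arises from the reduced plabic graph $G_L \bcfw G_R$ by removing a single edge (provided the result is reduced). If the deleted edge lies inside $G_L$, then $S' = S'_L \bcfw S_R$ and the butterfly subgraph is untouched. In this situation the perfect-orientation argument underlying \cref{lem:coin3} behaves dichotomously: either $\{a,b,n\}$ remains coindependent for $S'_L$, in which case every $J \in \binom{\{a,b,c,d,n\}}{4}$ still admits a perfect orientation with sinks at $J$ (hence all five twistors remain nonzero by applying strong-sign expansions in the style of \cref{lem:coin2}), or the coindependence fails and then \cref{lem:coin3} forces all five twistors to vanish simultaneously. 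Neither outcome produces a single vanishing twistor, and the same dichotomy rules out edges in $G_R$. Therefore the deleted edge must lie in the butterfly.

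\emph{Step 2 (Construction and path matrix).} For each $I$, I identify the butterfly edge $x_j$ incident to the boundary vertex of $\{a,b,c,d,n\} \setminus I$ that, upon removal, isolates the unique interior vertex channeling flow to that boundary; these are precisely the five edges $x_{12}, x_{10}, x_6, x_8, x_1$ listed in the statement, corresponding to $I = \{a,b,c,d\}, \{a,b,d,n\}, \{b,c,d,n\}, \{a,c,d,n\}, \{a,b,c,n\}$ respectively. I will then read off from Figure~\ref{fig:extra2} a perfect orientation of $G^{(i)}$ with sinks at the four vertices of $I$, producing a path-matrix representative with a row supported precisely on $I$. This row forces every Pl\"ucker basis of $S^{(i)}$ to intersect $I$, so $I$ is not coindependent for $S^{(i)}$, and by \cref{lem:coin1} the twistor $\llrr{I}$ vanishes identically on $Z_{S^{(i)}}$. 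That only $\llrr{I}$ vanishes is verified by exhibiting, for each of the remaining four $J$, an almost-perfect matching of $G^{(i)}$ whose boundary avoids $J$, which establishes coindependence and hence (via a strong-sign Cauchy--Binet expansion) nonvanishing of $\llrr{J}$.

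\emph{Step 3 (Uniqueness and main obstacle).} The single-vanishing pattern pins down the removed butterfly edge: among the twelve edges of the butterfly, only the five listed give rise to the pattern ``one of the five 4-subsets fails coindependence while the other four succeed,'' as the remaining seven either break coindependence of at least two of the 4-subsets (producing multiple vanishing twistors) or yield non-reduced graphs whose cells have codimension $>1$. Hence at most one facet $S'$ satisfies the hypothesis, and when $G^{(i)}$ is reduced the cell $S^{(i)}$ is that facet. The hard part of the proof is Step 1: verifying the promised dichotomy for facets of $S_L$ and $S_R$ requires combining strong-sign control of the five twistors with a careful analysis of how the perfect-orientation argument of \cref{lem:coin3} transfers to facets, since a priori a facet of $S_L$ could spoil coindependence of $\{a,b,n\}$ without spoiling coindependence of every 4-subset of $\{a,b,c,d,n\}$ for the full cell $S'$. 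Resolving this requires either a direct matroid-theoretic argument on the boundary degenerations of $G_L$ and $G_R$, or a more abstract appeal to the coincidence of image boundary with parameter boundary provided by \cref{lem:maps_to_bdry}.
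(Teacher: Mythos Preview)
Your three-step plan is essentially the paper's argument, and the pieces you identify are the right ones. Two corrections are worth making.

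First, your concern about Step~1 is misplaced, and you do not need \cref{lem:maps_to_bdry}. The paper argues by contradiction: assume the removed edge lies in $G_L$ (say), so $S' = S'_L \bcfw S_R$. Since four of the five twistors are nonvanishing on $Z_{S'}$, the contrapositive of \cref{lem:coin3} gives that $\{a,b,n\}$ is coindependent for $S'_L$ and $\{b,c,d,n\}$ for $S_R$. The paper then invokes \cite[Lemma~10.4]{even2023cluster} (which you gesture at but do not name) to conclude that \emph{every} $J \in \binom{\{a,b,c,d,n\}}{4}$ is coindependent for $S'$. On the other hand, $\llrr{I}$ has strong sign on $Z_S$ and vanishes on $Z_{S'}$, so \cref{lem:coin2} says $I$ is \emph{not} coindependent for $S'$---a clean contradiction. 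Your dichotomy is logically equivalent, but phrasing it this way shows there is no gap: the failure of $\{a,b,n\}$-coindependence need not be analyzed case-by-case, because the hypothesis that four twistors survive already rules it out.

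Second, your description in Step~2 is slightly garbled. The paper does not choose a perfect orientation with sinks at $I$; rather, it fixes a single perfect orientation of $G_L \bcfw G_R$ with a \emph{source} at $d$, so that the row of the path matrix indexed by $d$ has support exactly $\{a,b,c,d,n\}$ (see \cref{fig:extra2}). Deleting $x_{12}$ (resp.\ $x_{10}$, $x_6$, $x_8$) kills the path from $d$ to $n$ (resp.\ $c$, $a$, $b$), leaving a row supported on the corresponding $I$; for $x_1$ one first reverses the path from $d$ to $n$ to get a source at $n$. Your conclusion---that such a row forces $I$ to fail coindependence and hence $\llrr{I}=0$---is correct. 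For the remaining butterfly edges, the paper observes that deletion yields a row of support at most three inside $\{a,b,c,d,n\}$, so at least two of the five twistors vanish; this is exactly your Step~3.
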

\begin{proof}
Let $G_L$ and $G_R$ be reduced plabic graphs corresponding to $S_L$
and $S_R$. By \cite[Theorem 18.5]{postnikov} (see also \cite[Theorem B.14]{even2023cluster}
), any cell $S'$ of 
codimension $1$ in $\overline{S}$ comes from a plabic graph $G'$ 
obtained by removing an edge $e$ from $G_L \bcfw G_R$.
Such an edge could be in $G_L$ or $G_R$ or in the ``butterfly.''
Choose $I$ from  ${\{a,b,c,d,n\} \choose 4}$.
We first claim that if 
$\llrr{I}$  is the unique twistor coordinate among
${\{a,b,c,d,n\} \choose 4}$ which 
	vanishes on $Z_{S'}$, 
then edge $e$ must come from the butterfly.

Suppose $e$ does not come from the butterfly.
Then $G' = G'_L \bcfw G'_R$, where either $G'_L=G_L$ and 
$G'_R$ is obtained from $G_R$ by removing an edge $e$,
or vice versa. Since we are assuming the twistor coordinates from ${\{a,b,c,d,n\} \choose 4}$ which are not $\llrr{I}$ do not vanish on $\gt{S'}$,  \cref{lem:coin3} implies that
$\{a,b,n\}$ is coindependent for the cell of $G'_L$,
and $\{b,c,d,n\}$ is coindependent for the cell of $G'_R$.
Hence $G'_L$ and $G'_R$ have perfect orientations where 
$\{a,b,n\}$ 
and $\{b,c,d,n\}$ are sinks.
But now by \cite[Lemma 10.4]{even2023cluster}, 
all elements of 
${\{a,b,c,d,n\} \choose 4}$ are coindependent for 
$S'$, the cell associated to $G'_L \bcfw G'_R$.  
Meanwhile we know by \cite[Lemma 11.6]{even2023cluster} 
that $\llrr{I}$ has a strong sign on $Z_S$. 
	Therefore by \cref{lem:coin2}, $I$ is \emph{not}
	coindependent for $S'$. This is a contradiction.

Now we know that 
if $\llrr{I}$ is the unique twistor coordinate among
${\{a,b,c,d,n\} \choose 4}$ which 
vanishes on $Z_{S'}$,
then $S'$ has a plabic graph
which is obtained from $G_L \bcfw G_R$ by removing an edge $e$
from the butterfly. 
Let us choose 
perfect orientations of  $G_L$ and $G_R$ where 
$\{a,b,n\}$ 
and $\{b,c,d,n\}$ are sinks.  
We can then complete this to a perfect orientation
	of $G = G_L \bcfw G_R$ with a source at $d$, as in \cref{fig:extra2}.
\begin{figure}[h]
\centering
\includegraphics[height=3in]{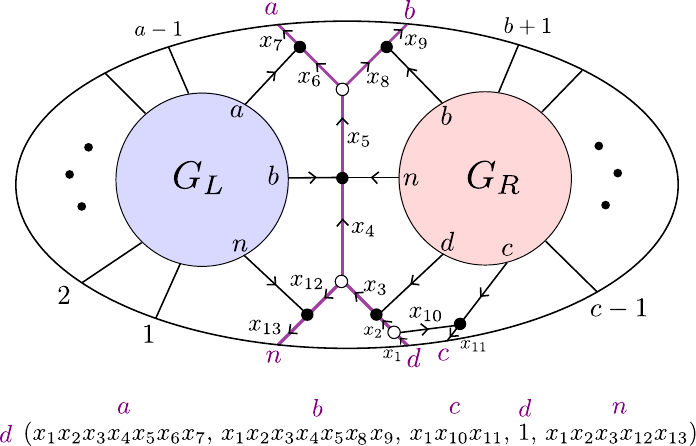}
\caption{A perfect orientation of the butterfly, and the nonzero
	entries of row $d$ in the associated path matrix.}
\label{fig:extra2}
\end{figure}

Then the path matrix $C$ associated to this perfect orientation
has a row indexed by $d$ with exactly five nonzero entries
in positions $a, b, c, d, n$.  If we weight the edges
	of $G$ as in \cref{fig:extra2}, the row $d$ of the path
	matrix is exactly as shown in the bottom of \cref{fig:extra2}.

Now notice that if we delete the edge $e$ labeled by $x_{12}$, i.e. if 
$x_{12}=0$, then our perfect orientation restricts to a perfect
orientation of the remaining subgraph, and 
when we construct the path matrix $C'$, row $d$ will have support $\{a,b,c,d\}$.
	Thus the path matrix $C'$, representing points of a cell $S^{(1)}$,
will fail to be coindependent at 
$\{a,b,c,d\}$ and hence the twistor coordinate
	$\llrr{a,b,c,d}$ will vanish on $Z_{S^{(1)}}$.
However, we can still find perfect orientations of the 
	``butterfly $\setminus \{e\}$'' with sinks at the other four elements
of ${\{a,b,c,d,n\} \choose 4}$, which all include $n$. 
	So these other four twistor coordinates will not vanish on $Z_{S^{(1)}}$.

Similarly, if we delete the edge $e$ labeled by $x_{10}$, then
row $d$ will have support $\{a,b,d,n\}$, and the analogous argument
shows that the associated cell $S^{(2)}$ will fail to be coindependent
at $\{a,b,d,n\}$.  Moreover $\llrr{a,b,d,n}$ will be the unique twistor
among 
 ${\{a,b,c,d,n\} \choose 4}$ which vanishes on $Z_{S^{(2)}}$.
Meanwhile, if we delete the edge $e$ labeled by $x_6$ (respectively, $x_8$), we 
get a cell $S^{(3)}$ (respectively, $S^{(4)}$) for which 
 $\llrr{b,c,d,n}$ 
 (respectively, $\llrr{a,c,d,n}$) is the unique twistor
among 
 ${\{a,b,c,d,n\} \choose 4}$ which vanishes on the image of the cell under $\tilde{Z}$.

In order to discuss what happens when we delete the edge labeled by $x_1$, we first
need to construct a new perfect orientation $\O'$, by reversing the directed path from $d$ to $n$.
Then when we delete the edge labeled by $x_1$, $\O'$ restricts to a perfect orientation,
and the associated path matrix has a row indexed by $n$ whose support is $\{a,b,c,n\}$.
As before $\llrr{a,b,c,n}$ will be the unique twistor 
among 
 ${\{a,b,c,d,n\} \choose 4}$ which vanishes on $Z_{S^{(5)}}$.

This constructs the plabic graphs $G^{(i)}$ corresponding 
to the cells $S^{(i)}$ (for $1\leq i \leq 5$) whose existence the theorem predicts. If $G^{(i)}$ is reduced, then $S^{(i)}$ is a facet of $S$, as desired.

To show that no other cells have the desired properties,
we show that if we delete any other edge of the butterfly, we get 
a cell $S'$ such that 
at least two twistors
coordinates among ${\{a,b,c,d,n\} \choose 4}$  
 vanish on $Z_{S'}$.
For example if we delete the edges labeled $x_2$ or $x_4$, 
we still have a perfect orientation but now row $d$ of the path matrix $C'$
has support at most three, which means that at least two twistor 
 coordinates among ${\{a,b,c,d,n\} \choose 4}$  
will vanish on $C'Z$.
To analyze what happens if we delete any of the other 
edges we have to change the perfect orientation, but in all cases
our path matrix $C'$ will have a row whose support is a 
$1$, $2$, or $3$-element subset of $\{a,b,c,d,n\}$,
which means that at least two twistor coordinates 
 among ${\{a,b,c,d,n\} \choose 4}$  
will vanish on $C'Z$.
\end{proof}

\begin{lemma}\label{lem:standardpermutation}
Let $S$ be a standard BCFW cell, and let $\pi$ be its trip permutation.
	Then $\pi(n) \notin \{1, n-1, n-2\}$, 
	and $\pi(1) \neq n-1$.
\end{lemma}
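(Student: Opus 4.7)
The plan is to induct on the number of chords $k$ of the chord diagram $D$ corresponding to the standard BCFW cell $S = S_D$ via \cref{def:standardfromCD}.

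\textbf{Base case} ($k=0$): Then $S_D = \Gr^{\geq 0}_{0,[n]}$, whose plabic graph consists of $n$ black lollipops, so $\pi$ is the identity and $\pi(n) = n$, $\pi(1) = 1$. For $n$ large enough to support a non-degenerate standard BCFW cell, neither value lies in the corresponding forbidden set.

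\textbf{Inductive step}: Let $D_{\rt} = (a,b,c,d)$ denote the rightmost top chord of $D$ and let $p = n-1$ be the penultimate marker of $[n]$, so \cref{def:standardfromCD} splits into two cases. In Case A, where $d < p$, we have $S_D = \pre_{n-1} S_{D'}$ for $D'$ the chord diagram on $N' = [n]\setminus\{n-1\}$. Inserting the zero column places a black lollipop at $n-1$, so $\pi(n-1) = n-1$ and $\pi(i) = \pi'(i)$ for $i \in N'$, where $\pi'$ is the trip permutation of $S_{D'}$. Invoking the inductive hypothesis for $S_{D'}$, and using \cref{index-sets} to interpret ``$n-1$'' and ``$n-2$'' as the second- and third-largest markers of $N'$ (namely $n-2$ and $n-3$), gives $\pi'(n) \notin \{1, n-2, n-3\}$ and $\pi'(1) \neq n-2$. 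Since $\pi'$ takes values in $N'$, automatically $\pi'(n), \pi'(1) \neq n-1$, and combining these we obtain $\pi(n) \notin \{1, n-1, n-2\}$ and $\pi(1) \neq n-1$.

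In Case B, where $d = p = n-1$, we have $S_D = S_L \bcfw S_R$ with $S_L, S_R$ standard BCFW cells on $N_L = \{1,\dots,a,b,n\}$ and $N_R = \{b,\dots,c,n-1,n\}$ respectively. Applied to $S_L$ and $S_R$, the inductive hypothesis (with ``$n-1, n-2$'' reinterpreted as the second- and third-largest elements of each marker set) yields constraints of the form $\pi_L(n) \notin \{1, b, a\}$, $\pi_L(1) \neq b$, $\pi_R(n) \notin \{b, n-1, c\}$, $\pi_R(b) \neq n-1$. The trip permutation $\pi$ of $G = G_L \bcfw G_R$ is then computed by tracing trips through the butterfly portion of \cref{fig:butterfly} using the rules of the road: a trip entering the butterfly at a shared vertex $b$ or $n$ either stays inside and exits at one of $a, b, c, d, n$, or crosses into $G_L$/$G_R$ and propagates according to $\pi_L$/$\pi_R$ before possibly returning to the butterfly. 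Combining the explicit trip structure of the butterfly with the inductive constraints on $\pi_L$ and $\pi_R$ in each sub-case shows that $\pi(n) \notin \{1, n-1, n-2\}$ and $\pi(1) \neq n-1$.

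The main obstacle is the case analysis in Case B, where one must enumerate all possible trajectories of a trip starting at $n$ (respectively $1$) as it weaves between the butterfly and the subgraphs $G_L, G_R$. The inductive constraints on $\pi_L, \pi_R$, together with the structural identities $b = a+1$ and $d = c+1 = n-1$ enforced by the BCFW recipe, are precisely tailored to rule out each forbidden trajectory; the combinatorial bookkeeping is routine but tedious.
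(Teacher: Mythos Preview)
Your approach is genuinely different from the paper's.  The paper dispatches the lemma in a single sentence by citing the Le-diagram description of standard BCFW cells from \cite[Definition~6.2]{karp2020decompositions} (equivalently the $\oplus$-diagram description of \cite[Definition~2.24]{even2021amplituhedron}); from those combinatorial descriptions one can read off the decorated permutation directly and the forbidden values for $\pi(n)$ and $\pi(1)$ follow by inspection.  Your inductive argument along the recursive structure of \cref{def:standardfromCD} is more self-contained and avoids importing the Le-diagram machinery, which is a genuine advantage if one wants the paper to stand on its own.

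That said, your Case~B is only a sketch.  You assert that tracing the trips from $n$ and from $1$ through the butterfly, combined with the inductive constraints on $\pi_L$ and $\pi_R$, rules out every forbidden endpoint, but you do not actually enumerate the trajectories or check that the available constraints suffice.  This is exactly the kind of argument where a trip can re-enter the butterfly after visiting $G_L$ or $G_R$, and one must verify case by case that it cannot terminate at $1$, $n-1$, or $n-2$ (respectively $n-1$).  The paper itself, in the proof of \cref{thm:showingreduced}, does precisely this sort of trip-tracing through the butterfly and invokes the present lemma at several points, so the analysis is certainly feasible; but ``routine but tedious'' is not a substitute for doing it.  If you want to keep the inductive route, you should at least exhibit the trip from $n$ explicitly (it enters $G_L$ at the internal copy of $n$, and one needs $\pi_L(n)\notin\{a,b\}$ to control where it exits) and similarly for the trip from $1$.
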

\begin{proof}
This follows from the Le-diagram description of standard BCFW cells
from \cite[Definition 6.2]{karp2020decompositions}, or 
 the related $\oplus$-diagram description given in 
\cite[Definition 2.24]{even2021amplituhedron}.
\end{proof}

\begin{theorem}[Plabic graphs for potential facets of standard BCFW tile] \label{thm:showingreduced}
Let $G = G_L \bcfw G_R$ be a reduced plabic graph
for 
the standard BCFW cell 
$S = S_L \bcfw S_R$ associated to a chord diagram
$D$ with top chord $D_{\rtop}$.  
Use the notation of \cref{thm:twistorvanish}
and 
\cref{fig:extra2}, and identify the labels of edges of $G$ with the edges
themselves.
\begin{itemize}
\item[($\alpha$).] 
	If $D_{\rtop}$ does not have a sticky child, then 
		$G\setminus \{x_{6}\}$ is reduced.\footnote{The converse may not be true.}
\item[($\beta$).]  
	$D_{\rtop}$ does not start where another chord 
		ends if and only if $G\setminus \{x_8\}$ is reduced.
\item[($\gamma$).] 
	The graph $G\setminus \{x_{10}\}$ is reduced.
\item[($\delta$).]  
	$D_{\rtop}$ does not have a same-end child 
		if and only if 
		$G\setminus \{x_1\}$ is reduced.
\item[($\epsilon$).] 
	$D_{\rtop}$ does not have a same-end child
		if and only if $G\setminus \{x_{12}\}$ is reduced.
\end{itemize}
\end{theorem}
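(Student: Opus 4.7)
The plan is to verify reducedness of each graph $G \setminus \{x_i\}$ by appealing to the standard criterion for a plabic graph being reduced: no strand (zigzag trip) is a closed loop, no strand has a self-intersection, and no two strands share two edges with matching orientations (a \emph{bad double crossing}). Since $G = G_L \bcfw G_R$ is reduced by hypothesis, any bad feature in $G \setminus \{x_i\}$ must be created by the deletion, so the analysis localizes entirely to how that deletion reroutes strands through the butterfly.

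First I would read off, for each of the five edges $x_{12}, x_{10}, x_6, x_8, x_1$, the precise strand rerouting inside the butterfly. This is already implicit in the proof of \cref{thm:twistorvanish}: the row-$d$ (or row-$n$) path in the path matrix loses the leg corresponding to the deleted edge, which in strand terms means a strand that previously exited at $n$ now exits at the new boundary of the supported columns (e.g., at $d$ after deleting $x_{12}$, at $c$ after deleting $x_{10}$, and so on).  From this local picture I would extract, for each $i$, the trip permutation of $G \setminus \{x_i\}$ in terms of those of $G_L$ and $G_R$ together with a specific strand that is now ``pulled through'' the butterfly.

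Next I would translate each condition on $D_{\rtop}$ into a strand-level statement: ``$D_{\rtop}$ has a same-end child'' corresponds to a specific strand in $G_R$ running from $d$ back into the butterfly; ``$D_{\rtop}$ starts where another chord ends'' corresponds to an analogous strand in $G_L$ arriving at $a$; and ``$D_{\rtop}$ has a sticky child'' corresponds to a strand in $G_L$ emerging at $b$. Under each forbidden relationship, I would check that the rerouted butterfly strand concatenates with that strand in $G_L$ or $G_R$ to form either a self-intersection or a bad double crossing, which yields the ``only if'' direction in ($\beta$), ($\delta$), ($\epsilon$) and the contrapositive-style statement in ($\alpha$).

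The ``if'' directions and case ($\gamma$) (which is unconditional) follow by observing that no other obstruction to reducedness can appear. Here \cref{lem:standardpermutation} is essential: it rules out strand endpoints at $n, n-1, n-2$ and the related coincidence at $1$, so a strand cannot accidentally create a closed loop or self-intersection through the rerouted butterfly. I would organize the write-up in the order ($\gamma$), ($\alpha$), ($\beta$), then ($\delta$) and ($\epsilon$) together, since the last two are symmetric under the vertical reflection of the butterfly interchanging $b \leftrightarrow c$ and $a \leftrightarrow d$. The main obstacle will be the bookkeeping in the sticky and same-end cases, where both the strand coming from $G_L$ (or $G_R$) and the rerouted butterfly strand must be tracked simultaneously to certify that exactly one bad feature appears precisely when the forbidden configuration on $D_{\rtop}$ holds, and that no other clashes are introduced in the complementary cases.
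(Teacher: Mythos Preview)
Your plan has the right spirit but misses the key lemma that makes the argument clean, and contains several concrete errors that would derail the execution.

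The paper does not verify the three reducedness criteria (no closed loops, no self-intersections, no bad double crossings) directly. Instead it invokes \cref{lem:edgeremoval} (Postnikov's Lemma~18.9): if $G$ is reduced and $T_1 : i \to \pi(i)$, $T_2 : j \to \pi(j)$ are the two trips through an edge $e$, then $G \setminus \{e\}$ is reduced if and only if $(i,\pi(i))$ and $(j,\pi(j))$ form a simple crossing in $\pi$. This reduces each ``if'' direction to a single local check: trace the two trips through the relevant butterfly edge and argue (often via \cref{lem:standardpermutation}, or just because the endpoints are adjacent boundary vertices) that nothing else can separate them. You never need to track how \emph{all} strands are rerouted after deletion. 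Your proposed global check would be much more laborious and you give no mechanism for ruling out bad crossings among strands that do not touch the butterfly at all.

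Three further points. First, you conflate the path-matrix analysis of \cref{thm:twistorvanish} with trip permutations: the row-$d$ support in a perfect orientation is not the same data as a strand endpoint, and you cannot read off strand rerouting from it. Second, your translation ``$D_{\rtop}$ has a sticky child corresponds to a strand in $G_L$ emerging at $b$'' is wrong: a sticky child of $D_{\rtop}$ lives in $G_R$, not $G_L$, and the relevant feature in the paper's proof is whether $G_R$ has a black lollipop at $b$. Third, the claimed symmetry between ($\delta$) and ($\epsilon$) under $b \leftrightarrow c$, $a \leftrightarrow d$ does not exist --- the butterfly is not symmetric under that reflection, and the paper's arguments for the two cases are genuinely different (case ($\epsilon$) in particular requires an extra square-move argument when $D_{\rtop}$ has a same-end grandchild).

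For the ``only if'' directions, the paper does not look for bad double crossings; it exhibits a configuration in $G \setminus \{x_i\}$ that is move-equivalent to a bubble, which is a more direct certificate of non-reducedness.
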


Before proving the theorem, we recall a useful lemma.
\begin{lemma}\cite[Lemma 18.9]{postnikov}\label{lem:edgeremoval}
Let $G$ be a reduced plabic graph with trip permutation $\pi$,
 let $e$ be an edge of $G$,
and let $T_1: i \to \pi(i)$ and $T_2: j \to \pi(j)$ 
be the two trips in $G$ that pass through $e$ 
(the trips will pass through this edge in two different directions).
 Then $G \setminus \{e\}$ is 
       reduced if and only if the pair $(i,\pi(i))$
       and $(j,\pi(j))$ is a simple crossing in $\pi$.
\end{lemma}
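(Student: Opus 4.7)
The plan is to apply Postnikov's edge-removal criterion (\cref{lem:edgeremoval}) to each of the five butterfly edges $x_6, x_8, x_{10}, x_1, x_{12}$ in turn. For each edge $e$, I need to identify the two strands $T_1 \colon i \to \pi(i)$ and $T_2 \colon j \to \pi(j)$ of $G = G_L \bcfw G_R$ that pass through $e$, read off the pairs $(i,\pi(i))$ and $(j,\pi(j))$, and then determine (in each case) whether they form a simple crossing in the trip permutation $\pi$. The conditions appearing in the theorem (sticky child, same-end child, starting where another chord ends) should translate exactly into the failure or success of simplicity of that crossing.

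First I would do the trip bookkeeping inside the butterfly. Using the perfect orientation pictured in \cref{fig:extra2} and the rule that strands turn maximally right at white vertices and maximally left at black vertices (or vice versa, depending on convention), I trace which two strands pass through each of the five edges, tracking where the strands enter and leave the butterfly and continue into $G_L$ or $G_R$. Because the butterfly interfaces with $G_L$ via the outer boundary $\{n,\dots,a,b\}$ and with $G_R$ via $\{b,\dots,c,d,n\}$, each such strand ends either at an external boundary vertex or at a point determined by applying $\pi_L$ or $\pi_R$ to one of the gluing markers $a,b,c,d,n$. This bookkeeping produces, for each of the five edges, an explicit formula for $(i,\pi(i))$ and $(j,\pi(j))$ in terms of $\pi_L, \pi_R$ evaluated at a few particular markers.

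Next I would translate each chord-diagrammatic condition of the theorem into the corresponding permutation statement. For case $(\gamma)$, the crossing at $x_{10}$ should always be simple for a standard BCFW cell, which I would verify using \cref{lem:standardpermutation} (which constrains $\pi_L(n), \pi_R(n)$, etc.) together with the fact that $D_{\rtop}$ is the rightmost top chord. For $(\beta)$, the condition ``$D_{\rtop}$ starts where another chord ends'' corresponds to $\pi_L$ mapping marker $a$ to a location that coincides with an endpoint of the $x_8$-crossing, breaking simplicity; in the absence of such a chord the crossing becomes simple. Similarly, for $(\delta)$ and $(\epsilon)$, the existence of a same-end child of $D_{\rtop}$ forces $\pi_R(d)$ to land at a marker that produces a non-simple crossing through $x_1$ and $x_{12}$, respectively. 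For $(\alpha)$, a sticky child of $D_{\rtop}$ similarly forces a collision in the endpoints of the crossing through $x_6$; the converse is only one-directional because other (non-sticky) configurations could also destroy simplicity at $x_6$, which is why the theorem is stated as a one-way implication in that case.

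The main obstacle will be the combinatorial bookkeeping in the first step: one must carefully determine the two strands through each butterfly edge, especially because the direction of traversal depends on the color of the adjacent internal vertex, and because a strand may enter and exit the butterfly several times before reaching a boundary vertex. Once the pairs $(i,\pi(i)),(j,\pi(j))$ are written down explicitly as functions of $\pi_L$ and $\pi_R$, matching each chord-diagram condition with the relevant cyclic interleaving condition for a simple crossing is then a bounded case analysis, aided by \cref{lem:standardpermutation}. I would present the five edges in the order $x_{10}, x_8, x_6, x_1, x_{12}$, starting with the unconditional case $(\gamma)$ to fix notation, then handling the non-degenerate cases $(\beta), (\alpha)$, and finally the two ``same-end'' cases $(\delta),(\epsilon)$, which are symmetric and can be proved in parallel.
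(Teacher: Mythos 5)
Your proposal does not actually prove the statement in question. The statement is Postnikov's edge-removal criterion itself (\cref{lem:edgeremoval}): for a reduced plabic graph $G$ with trip permutation $\pi$ and an edge $e$ traversed by the trips $i\to\pi(i)$ and $j\to\pi(j)$, the graph $G\setminus\{e\}$ is reduced if and only if those two pairs form a simple crossing in $\pi$. Your first sentence already takes this equivalence for granted (``apply Postnikov's edge-removal criterion to each of the five butterfly edges''), and everything that follows --- tracing strands through the butterfly, translating sticky/same-end chord conditions into simplicity of crossings, invoking \cref{lem:standardpermutation} --- is a sketch of \cref{thm:showingreduced}, the theorem in this paper that \emph{uses} the lemma. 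Nowhere do you give an argument for why deleting an edge preserves reducedness exactly when the crossing is simple, so as a proof of the stated lemma the proposal is circular (it assumes the conclusion) and contains no relevant content.

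For the record, the paper itself does not prove this lemma either; it is quoted verbatim from Postnikov \cite[Lemma 18.9]{postnikov}. A genuine proof would have to work with Postnikov's trip-based characterization of reduced graphs: when $e$ is removed, the two trips through $e$ are rerouted and spliced into one another, so the trip permutation of $G\setminus\{e\}$ differs from $\pi$ by swapping the targets of $i$ and $j$; one then checks, via the criteria on bad double crossings (equivalently via the dimension/length count for decorated permutations), that the new graph is reduced precisely when $(i,\pi(i))$ and $(j,\pi(j))$ cross simply. If your intended target was in fact \cref{thm:showingreduced}, then your outline is broadly parallel to the paper's argument (the paper likewise treats the five butterfly edges case by case, identifies the two trips through each edge using the lollipop or bridge structure of $G_L$ and $G_R$ forced by the chord conditions, and applies \cref{lem:standardpermutation} to rule out interfering trips); but that is a different statement from the one you were asked to prove.
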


\begin{proof}[Proof of \cref{thm:showingreduced}]
\begin{figure}[h]
		\includegraphics[height=3.5cm]{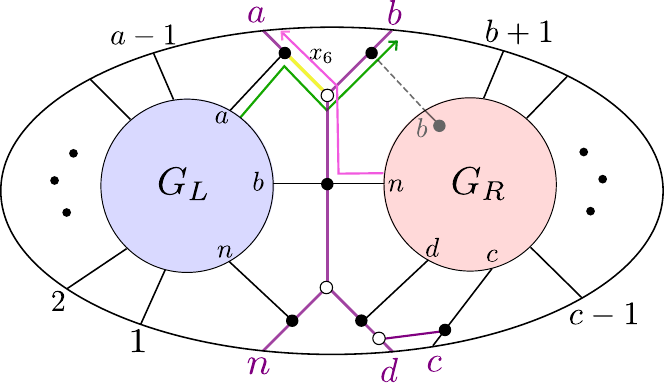}
		\caption{
	If $D_{\rtop}$ does not have a sticky child, then 
		$G\setminus \{x_{6}\}$ is reduced.}
		\label{fig:alpha}
\end{figure}

Case  ($\alpha$).  If $D_{\rtop}$ does not 
have a sticky child, then $G_R$ has a black lollipop at $b$. This means
that in $G$, the edge connecting vertex $b$ in $G_R$ to the ``butterfly''
can be contracted.  
The trips going through edge $x_6$ are shown in \cref{fig:alpha}.
Since these two trips end at adjacent boundary vertices,
they must be part of a simple crossing. 
	Therefore by \cite[Lemma 18.9]{postnikov},
$G\setminus \{x_6\}$ is reduced.
	
	\begin{figure}[h]
		\includegraphics[height=3.5cm]{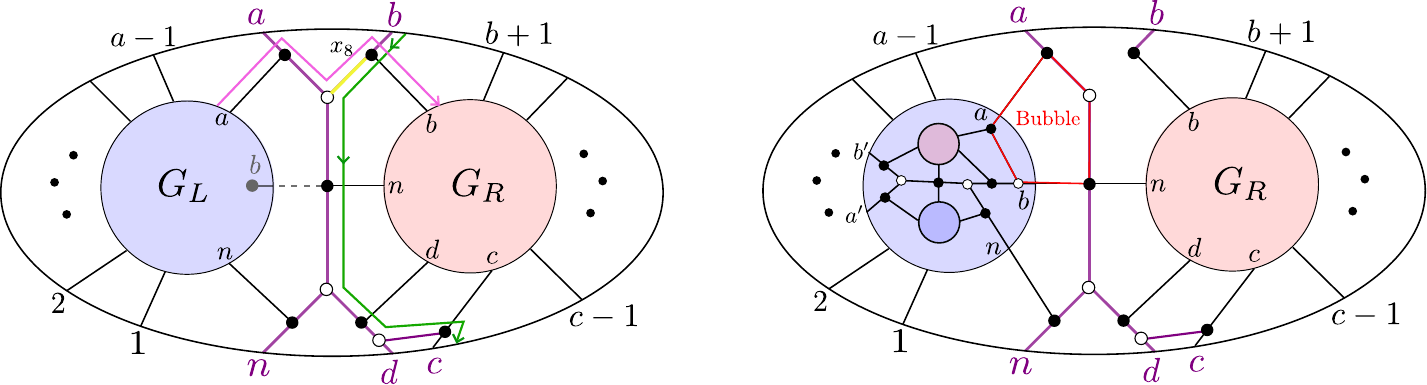}
		\caption{Left:
	if $D_{\rtop}$ does not start where another chord 
		ends then $G\setminus \{x_8\}$ is reduced.
       Right: if 
	 $D_{\rtop}$ starts where another chord 
		ends then $G\setminus \{x_8\}$ is non-reduced.}
		\label{fig:beta}
\end{figure}

Case ($\beta$). 
	Suppose that $D_{\rtop}$ does not start where 
another chord ends.  Then $G_L$ has a black lollipop at vertex $b$,
	which means that the edge (shown dashed in \cref{fig:beta})
	connecting that vertex to the butterfly
	can be contracted.  The two 
trips which pass through $x_8$ are shown in pink and green in \cref{fig:beta}.  
By \cref{lem:standardpermutation}, $\pi_{G_L}(n) \neq a$  and so the
pink trip in $G$ must start at the left part of the graph, i.e. 
at some element in $\{1,2,\dots,a-1\}$.  We also claim that the 
pink trip in $G$ must end at the right part of the 
graph, i.e. at some element in $\{b+1,b+2,\dots,c-1\}$, otherwise
the pink and green trips would have a \emph{bad double crossing}
and $G$ would fail to be reduced \cite[Theorem 13.2]{postnikov}.
But now it is clear that the pink and green trips must form a simple 
crossing, because there is no other trip in $G$ that starts
	at an element of $\{1,2,\dots,a\}$ and ends at an element
	of $\{b+1,b+2,\dots,c-1\}$.
	Therefore by \cite[Lemma 18.9]{postnikov},
$G\setminus \{x_8\}$ is reduced.
	
Now suppose that $D_{\rtop}$ starts where 
another chord ends.  Then $G_L$ has the form shown at the right of 
 \cref{fig:beta}: in particular, the vertices $a$ and $b$ of $G_L$
are connected by a black-white bridge. But then 
when we delete edge $x_8$, the resulting graph
has a configuration of vertices which is 
move-equivalent to a bubble (cf \cite[Definition B.2]{even2023cluster})
, as shown in 
	the right of \cref{fig:beta}.  Therefore $G\setminus \{x_8\}$
	is not reduced.

	\begin{figure}[h]
		\includegraphics[height=3.5cm]{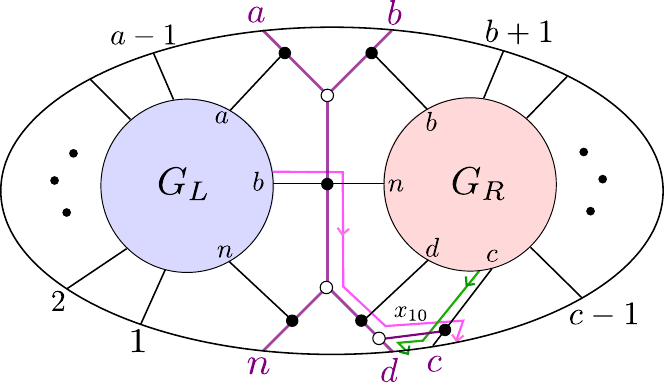}
		\caption{
			The graph $G\setminus \{x_{10}\}$ is reduced.}
		\label{fig:gamma}
\end{figure}

Case ($\gamma$).
The two trips passing through $x_{10}$ are shown in \cref{fig:gamma}.
Since these two trips end at $c$ and $d$, there cannot be another trip 
ending between $c$ and $d$, hence they represent a simple crossing.
Therefore 
	 by \cref{lem:edgeremoval},
$G\setminus \{x_{10}\}$ is reduced.

\begin{figure}[h]
\includegraphics[height=3cm]{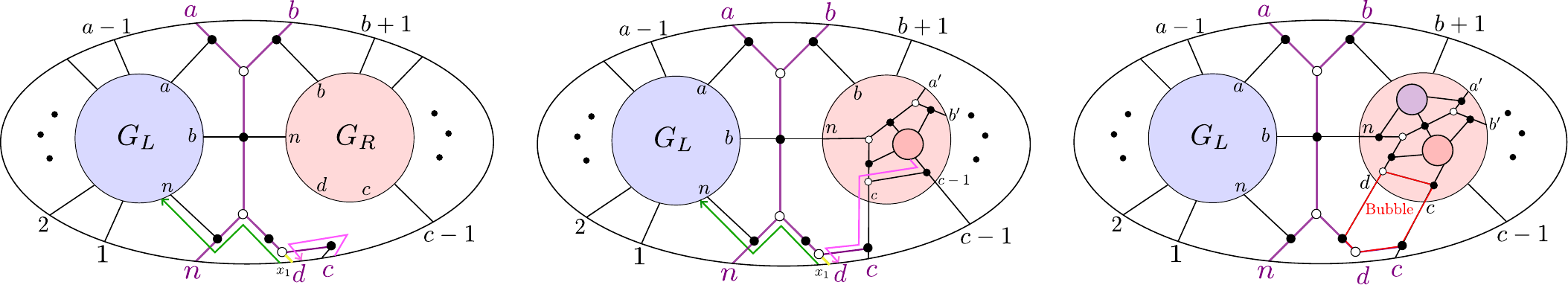}
\caption{Left: If $D_{\rtop}$ does not have a same-end child
and $D$ has no chord ending at $(c-1,c)$, then $G\setminus \{x_1\}$
is reduced.  Middle: If $D_{\rtop}$ does not have a same-end child
and $D$ does have a chord ending at $(c-1,c)$, then $G\setminus \{x_1\}$ is reduced.
Right: if $D_{\rtop}$ has a same-end child then $G\setminus \{x_1\}$ is not reduced.}
	\label{fig:delta}
\end{figure}

Case ($\delta$).
Suppose that $D_{\rtop}$ does not have a same-end child.
Then $D$ does not have another chord ending at $(c,d)$, and hence
in $G_R$, the vertex $d$ will be a black lollipop that can be contracted.
First suppose there is no chord in $D$ ending at $(c-1,c)$,
then there is  also a lollipop in $G_R$ at $c$, and $G$ looks as shown at the 
left of \cref{fig:delta}. Then one of the trips through edge $x_1$
goes from $c$ to $d$, so the two trips passing through $x_1$ must form a simple
crossing.  Therefore
	 by \cite[Lemma 18.9]{postnikov},
		$G\setminus \{x_1\}$ is reduced.
Now suppose there \emph{is} a chord in $D$ ending at $(c-1,c)$.
Then $G$ looks as shown in the middle of \cref{fig:delta}.
By \cref{lem:standardpermutation}, $\pi_{G_R}(1) \neq n-1$  and 
$\pi_{G_R}(n) \neq n-1$, so the pink trip must start at an element of 
$\{b+1,\dots,c-1\}$.  Similarly, by \cref{lem:standardpermutation},
$\pi_{G_L}(n) \neq n-2$ and $\pi_{G_L}(n) \neq n-1$, so the 
green trip must end at an element of $\{1,2,\dots,a-1\}$. But now the pink
and green trips must form a simple crossing, because there is no other 
trip that can start at an element of $\{b+1,\dots,c-1\}$ and end at an element
of $\{1,2,\dots,a-1\}$.  Therefore 
		$G\setminus \{x_1\}$ is reduced.

Now suppose that $D_{\rtop}$ has a same-end child.
Then $G_R$ has a black-white bridge at vertices $c,d$, 
and when we delete $\{x_1\}$, $G\setminus \{x_1\}$  looks as in the right of 
\cref{fig:delta}.  We obtain a face which is move-equivalent to a bubble,
so $G \setminus \{x_1\}$ is not reduced.

	\begin{figure}[h]
		\includegraphics[width=\textwidth]{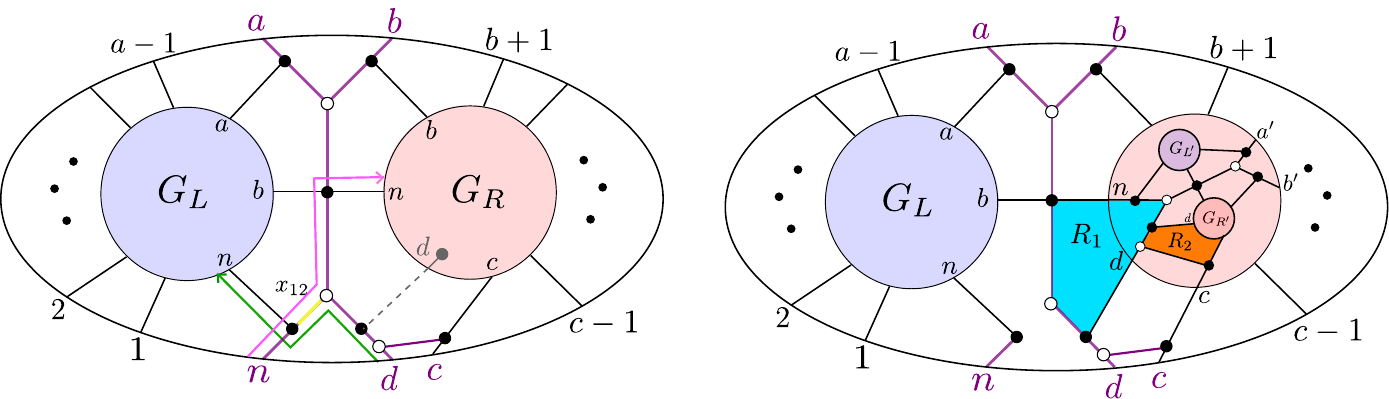}
		\caption{
			Left: 
			if $D_{\rtop}$ does not have a same-end child
		then $G\setminus \{x_{12}\}$ is reduced. 
		Right: 
			if $D_{\rtop}$ has a same-end child
		then $G\setminus \{x_{12}\}$ is not reduced. 
	}
		\label{fig:epsilon}
\end{figure}

Case ($\epsilon$). Suppose that $D_{\rtop}$ does not have a same-end child.
Then $G_R$ has a black lollipop (which can be contracted), and hence the two 
trips passing through $x_{12}$ are as shown at the left of \cref{fig:epsilon}.
Since these two trips start at adjacent vertices $d$ and $n$, they must form
a simple crossing.  Therefore by 
	  \cite[Lemma 18.9]{postnikov}, 
	  $G\setminus \{x_{12}\}$ is reduced.

Now suppose that $D_{\rtop}$ does have a same-end child. 
Then $G_R$ has a black-white bridge, as shown in the 
right of \cref{fig:epsilon}.
$G_R$ is itself the plabic graph of a standard BCFW cell, so we can 
write it as $G_R = G_{L'} \bcfw G_{R'}$.  If $d$ is a black 
lollipop in $G_{R'}$, then we can contract the edge joining that lollipop
to the butterfly in $G_r$, and then we find that region $R_1$
in \cref{fig:epsilon} is move-equivalent to a bubble.
On the other hand, if $d$ is not a black lollipop in $G_{R'}$,
then $D_{\rtop}$ has a same-end grandchild, so $G_{R'}$ has a black-white
bridge.  Then one can do a square move at $R_2$ which turns 
$R_1$ into a bubble.  Therefore 
	  $G\setminus \{x_{12}\}$ is not reduced.
\end{proof}

\begin{proof}[Proof of \cref{prop:standard_facets}]
	By \cref{lem:maps_to_bdry}, all facets of $S_D$ map to the boundary of $\gt{D}$, so any cell in $\partial S_D$ whose image is codimension 1 in $\gt{D}$ is a facet of $\gt{D}$. \cref{thm:clusteradjacency} shows that all facets of $\gt{D}$ lie 
in the zero locus of a cluster variable in $\Irr(D).$ 
By  \cref{lem:mutable_vars_give_codim_2}, 
no facet is contained in the zero locus of a mutable cluster variable
	 $\Mut(Z_D)$. 
Thus, we are left to show the following.
\begin{claim}\label{claim:main}
For each frozen variable $\rzeta$ in $\AFacet(D)$, there is exactly 
one cell $S$ of codimension $1$ in $\overline{S}_D$ 
such that $\gt{S}$ is codimension 1 in $\gt{D}$ and $\gt{S}$ lies in the zero locus of $\rzeta$.
\end{claim}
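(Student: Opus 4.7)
The plan is to prove \cref{claim:main} by induction on $k$, the number of chords of $D$. The base case $k=0$ is trivial since $\AFacet(Z_D)=\emptyset$. For the inductive step, I would distinguish the two possibilities in the recursive construction of $S_D$ from \cref{def:standardfromCD}. If $S_D=\pre_p S_{D'}$, then $D$ and $D'$ share the same chords, so $\AFacet(Z_D)=\AFacet(Z_{D'})$ under the natural identification of domino variables, and facets of $\overline{S}_D$ correspond to those of $\overline{S}_{D'}$ via $\pre_p$. Since adjoining a zero column does not affect the product $CZ$, the amplituhedron images coincide and the claim reduces to that for $D'$ by induction.

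The main case is when $S_D=S_L\bcfw S_R$ with top chord $D_k=(a,b,c,d)$ and $d$ the penultimate marker. Using \cref{def:generalcluster}, I would partition $\AFacet(Z_D)$ into three groups: (i)~the top-chord frozen variables, forming a subset of $\{\lr{bcdn},\lr{acdn},\lr{abdn},\lr{abcn},\lr{abcd}\}$; and (ii),~(iii)~the rescaled promotions $\rPsi_B(\rzeta)$ of frozen variables $\rzeta$ in $\AFacet(Z_{D_L})$ and $\AFacet(Z_{D_R})$. For group~(i), the corresponding functionaries are the five butterfly twistors $\llrr{I}$ with $I\in\binom{\{a,b,c,d,n\}}{4}$. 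By \cref{thm:twistorvanish}, each such $\llrr{I}$ admits at most one candidate facet $S^{(j)}$ obtained by deleting a specific butterfly edge, and by \cref{thm:showingreduced}, the resulting plabic graph $G^{(j)}$ is reduced precisely under the conditions of \cref{def:frozmut} for the corresponding top-chord variable to be frozen. The plan is to directly match the five cases of \cref{thm:showingreduced} with the five bullets of \cref{def:frozmut}, producing the desired facet for each top-chord frozen variable.

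For groups~(ii) and~(iii), the induction hypothesis supplies a unique facet $S'_L$ of $S_L$ (resp.~$S'_R$ of $S_R$) whose $\tZ$-image has codimension one in $\gt{D_L}$ (resp.~$\gt{D_R}$) and lies in the zero locus of $\rzeta$. I would then argue that $S'_L\bcfw S_R$ is the sought facet of $S_D$: it has codimension one in $\overline{S}_D$, and since $\rPsi_B$ differs from $\Psi_B$ only by a Laurent monomial in $\mathcal{T}'$ (\cref{thm:promotion2}), the functionary $\rPsi_B(\rzeta)(Y)$ vanishes on its $\tZ$-image. Checking that this image has codimension exactly one in $\gt{D}$ would then require a careful dimension count using the injectivity of \cref{thm:BCFW-tile-and-sign-description}, along with the fact that $\rPsi_B(\rzeta)$ has a definite sign on the open tile $\gto{D}$ by \cref{thm:sign-definite}. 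Group~(iii) is symmetric.

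Finally, uniqueness across all of $\AFacet(Z_D)$ will follow from \cref{thm:clusteradjacency} combined with \cref{thm:quiver}: each facet of $\gt{D}$ lies on a hypersurface cut out by some $\xi\in\xx(D)$, and these variables, being part of a single cluster for $\Gr_{4,n}$, are algebraically independent irreducible polynomials, whose zero loci are distinct irreducible hypersurfaces. Together with \cref{lem:mutable_vars_give_codim_2}, this yields a bijection between frozen variables and facets. The main obstacle I anticipate is the verification in groups~(ii)--(iii) that $\tZ(S'_L\bcfw S_R)$ has codimension exactly one in $\gt{D}$, which requires tracking how product promotion interacts with the sign description of \cref{cor:cluster-sign-description} and ensuring that $S'_L$ being a genuine \emph{facet} of the tile $\gt{D_L}$---not merely a facet of $S_L$---propagates to $S'_L\bcfw S_R$ being a facet of $\gt{D}$ itself.
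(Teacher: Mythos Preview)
Your induction on $k$ broadly parallels the paper's Step~2, but the paper organizes the proof differently and the obstacles you flag are genuine gaps that your proposed tools do not close. The paper does not partition $\AFacet(Z_D)$ into top-chord versus promoted variables; it splits by whether the variable is a boundary twistor $\lr{i,i{+}1,j,j{+}1}$. For non-twistor frozen variables the entire claim---construction of the facet cell, its uniqueness, and injectivity of $\tZ$ on it---is imported from \cite[Section~7]{even2021amplituhedron}, where such facets were built as $(D,\star)$-extended domino forms and shown to map injectively to the interior of the amplituhedron. Only boundary twistors use \cref{thm:twistorvanish,thm:showingreduced}. For those, uniqueness of the \emph{cell} comes from \cite[Lemma~10.5]{even2023cluster} (every facet of $S_D$ is a butterfly-edge removal or of the form $S'_L\bcfw S_R$ or $S_L\bcfw S'_R$) plus induction. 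Your uniqueness argument---distinct cluster variables cut distinct irreducible hypersurfaces---only yields an injection from facets of the tile $\gt{D}$ to frozen variables; it does not rule out two distinct codimension-one \emph{cells} $S_1,S_2\subset\partial S_D$ with coinciding images in a single hypersurface.

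The obstacle you isolate for groups (ii)--(iii) is precisely the content of the paper's Step~3, and your proposed route does not resolve it: \cref{thm:BCFW-tile-and-sign-description} gives injectivity only on BCFW cells, while $S'_L\bcfw S_R$ is not one, and the claim's induction hypothesis asserts merely that $\gt{S'_L}$ has codimension one, not that $\tZ$ is injective on $S'_L$. The paper instead reruns the injectivity proof of \cite[Theorem~7.7]{even2023cluster} with one ``degenerate'' BCFW product (a single BCFW parameter set to zero), re-verifying the coindependence hypothesis at each subsequent step; a dimension count from the sign description does not substitute for this. (A minor issue: your partition of $\AFacet(Z_D)$ is not exact---when a top chord of $D_R$ is a sticky same-end child of $D_k$, its $\rbeta$ is frozen in $D_R$ but promotes to the mutable $\ralpha_k$ in $D$; cf.\ \cref{rmk:8_2}.)
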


In \cite[Section 7]{even2021amplituhedron} it was shown that each facet $S'$ of a standard BCFW cell $S_D$ either:
\begin{enumerate}
\item
maps to the interior of $\Ank$, in which case it 
maps injectively \cite[Proposition 8.2]{even2021amplituhedron}, 
and lies in the zero locus of a 
coordinate functionary,\footnote{\cite[Section 7]{even2021amplituhedron} is phrased using entries and 2-by-2 minors of the domino matrix, which are the same as our BCFW coordinates.} or
\item maps to the boundary of $\Ank$, in which case
$Z_{S'}$ lies in the zero locus of a domino cluster variable of the form
$\llrr{i,i+1,j,j+1}$.
\end{enumerate}

In the first case, 
 \cref{claim:main}
follows from results of \cite{even2021amplituhedron}, as we now explain.
Those facets of $S_D$
which map injectively to the interior of the 
amplituhedron 
are in bijection with the elements of $\Froz(Z_D)$
which do not have the form $\llrr{i,i+1,j,j+1}$, and 
can be explicitly constructed using the BCFW recursion,
but with one parameter set to $0$  
\cite[Lemma 7.9]{even2021amplituhedron}.
	Then using the arguments from the proof
 of \cref{thm:clusteradjacency}
 , one can see that if $S'$ is a facet 
of $S_D$ where a {single} BCFW coordinate $\zeta_i$ vanishes, 
then $Z_{S'}$ lies in the zero locus of the corresponding cluster variable $\rzeta_i.$
Moreover, 
for every BCFW parameter, there is at most one facet of $S_D$ where {only}
that parameter vanishes
 (cf. \cite[Lemmas 7.9, 7.13, 7.14, 7.15]{even2021amplituhedron}). 

We now show that 
\cref{claim:main} holds for frozen domino variables of the form
$\lr{i,i+1,j,j+1}$, using 
results of \cite[Section 7]{even2021amplituhedron}
as well as
	\cref{thm:twistorvanish} and \cref{thm:showingreduced}.
We use the notation of \cite{even2021amplituhedron}
which are close to the ones used in this paper, but not identical.

\emph{Step 1: constructing the facets.}
Since we are concerned only with facets of $Z_D$ where 
a boundary twistor $\llrr{i,i+1,j,j+1}$ vanishes, 
we can use \cref{thm:twistorvanish} 
	and \cref{thm:showingreduced} to build the plabic graph $G$ corresponding to the facet (we will show in Step 3 below that the image of the cell $S_G$ has codimension 1 in $Z_D$). 
Concretely,
in order to construct the graph $G$ corresponding to the facet of $Z_D$ where 
$\rzeta_i$ vanishes (where $\rzeta_i$ is a boundary twistor), 
we follow the procedure for constructing $S_D$,
but at the $i$th step we remove the edge of the 
butterfly dictated by \cref{thm:twistorvanish}.

\emph{Step 2: Uniqueness of facets where a given cluster variable vanishes.}
We use induction to show that for each 
$\llrr{i,i+1,j,j+1} \in \Froz(Z_D)$, 
there is at most one facet of 
a tile $Z_D$ in its zero locus.
From  \cite[Lemma 10.5]{even2023cluster}
, we know that 
each facet $Z_{S'}$ of a BCFW tile $Z_D$ either 
(1) lies in the vanishing locus of a domino variable $\rzeta_k$ of the $k$th chord (which is a twistor coordinate with indices in $\{a,b,c,d,n\}$),
or 
(2) the cell $S'$ is the BCFW product of a BCFW cell and a facet of another BCFW cell. 
By induction, the tiles coming from Case (2) lie in the vanishing locus 
of distinct cluster variables; and these cluster variables must all be
different from the twistor coordinates of the $k$th chord. (The only case when a coordinate cluster variable from $S_L$ or $S_R$ promotes to a twistor coordinate for the top chord is the case of $\rbeta_i$ where $D_i$ is a sticky same-end child of $D_k$; in this case, $\rbeta_i=\ralpha_k=\llrr{bcdn}$ which is not a boundary twistor since $D_k$ has a child.)
In Case (1), \cref{thm:twistorvanish}
 shows that there is at most one facet $Z_{S'}$ of $Z_{S_D}$ which 
 lies in the zero locus of a single chord twistor of the $k$th chord. 
But now by \cref{lem:mutable_vars_give_codim_2}, if two cluster variables vanish
on $Z_{S'}$, it must have codimension at least $2$, 
so all facets of $Z_{D}$ must lie in the vanishing locus of distinct cluster variables.

\emph{Step 3: Injectivity of the amplituhedron map.} 
In light of  \cref{thm:twistorvanish},
we can alternatively construct the facets by 
following the recipe of
 \cref{def:std-bcfw-cells}
 , but 
setting exactly one of the 
BCFW parameters 
$\{\alpha_i, \beta_i, \delta_i, \gamma_i, \epsilon_i\}$ for $1\leq i \leq k$ 
 equal to $0$ at the appropriate BCFW step.
Using slightly different conventions, such a 
construction\footnote{This construction 
was called the \emph{$(D,\star)-$extended domino form} for $\star\in\{\alpha_i,\beta_i,\gamma_i,\delta_i,\varepsilon_i,\eta_{ij},\theta_{ij}\}$.} 
was given in 
 \cite[Definition 7.6 and Lemma 7.7]{even2021amplituhedron}
for most facets,
building each facet in terms 
 of the operations $\pre_i,\inc_i,x_i(\R_+),y_i(\R_+).$

Now we need to show that the amplituhedron map restricted to 
 $S',$ the facet of $S$ 
obtained by setting a particular BCFW parameter $\star$ to $0$, is injective. The proof is similar to the proof of \cite[Theorem 7.7]{even2023cluster}. 
The positroid cell $S'$ is constructed by a sequence of adding zero columns, BCFW products, and a single ``degenerate" BCFW product.

As in the proof of \cite[Theorem 7.7]{even2023cluster} the proof of injectivity follows by showing that injectivity persists through the different steps of the construction of $S'.$ The treatment in the cases of adding a zero column, and doing a BCFW product is identical to the treatment in \cite[Theorem 7.7]{even2023cluster}
, 
relying on \cite[Theorem 11.3]{even2023cluster} 
(as before  we need to verify that $\{b_i,c_i,d_i,n\}$ is
coindependent at the time of the $i$th BCFW step). 
The treatment in the single degenerate BCFW product is also completely analogous to that of\cite[Theorem 7.7]{even2023cluster}
, and this proves the injectivity. 

Note, however, that in the application of \cite[Lemma 11.13]{even2023cluster} 
for the degenerate step, the coordinate $\star$ turns out to be $0,$ while the other four keep the same sign they would have had on the BCFW cell at that stage. This twistor will be promoted, according to \cite[Theorem 11.3]{even2023cluster} to a functionary vanishing on this facet. 
The same argument used in the proof of \cref{thm:clusteradjacency} 
shows that each facet lies in the zero locus of the corresponding \emph{reduced} functionary. In light of the uniqueness discussion above, we see that each such reduced boundary functionary corresponds to a unique facet. 
It also follows that the facet is characterized as the locus where the corresponding functionary vanishes, but the other coordinate functionaries keep their signs.
\cref{lem:maps_to_bdry} shows that the facets indeed map to the boundary of the tile.

\end{proof}
Note that the uniqueness in the above proof follows from two facts. First, if a facet in the domain has image which is not a facet at some time of the cell construction process, then the BCFW product of this facet with a standard BCFW cell will also have image which is not a facet. Second, when a new facet in the domain (which corresponds to the rightmost top chord at a given time of the process) maps to a facet of the tile, it is the maximal face in the domain, among those which map into the zero locus of the corresponding chord twistor, hence other components in this zero locus are of lower dimension already in the domain.

\subsection{Proof of \cref{lem:mutable_vars_give_codim_2}}\label{sec:trans}
The proof of the lemma will use the notion of transversality. For this we recall some notions and facts.
 \begin{definition}
	 Let $X$ be an $n$ dimensional manifold with an atlas $\{(U_\alpha,\phi_\alpha:U_\alpha\to\R^n)\}_{\alpha\in A}.$ We say that a set $\ZZ\subseteq X$ \emph{is of measure $0$,}  if for every $\alpha\in A,$ the set $\phi_\alpha(\ZZ \cap U_\alpha)$ is of Lebesgue measure $0$ in~$\R^n.$
	If $\YY\subset X$ is the complement of a measure $0$ subset, we say that 
	 \emph{almost every $x\in X$ belongs to $\YY.$}
\end{definition}

	\begin{definition} 
		Let $f:X\to \YY$ be a smooth map between smooth manifolds $X,\YY$. Let $\ZZ$ be a smooth submanifold of $\YY.$ We say that $f$ is \emph{transverse} to $\ZZ,$ and write $f\pitchfork \ZZ$ if for every $x\in f^{-1}(\ZZ)$
		\[df_x(T_xX)+T_{f(x)}\ZZ=T_{f(x)}\YY,\]where $T_xX$ denotes the tangent space of $X$ at $x\in X,$ and $df_x$ is the differential map at $x,$ which maps $T_xX$ into $T_{f(x)}\YY.$
	\end{definition}
	
	\begin{theorem}[Thom's Parametric Transversality Theorem]\label{thm:thom} 
		Let $X$ be a smooth manifold, let $B,\YY$ be smooth manifolds and let $\ZZ$ be a submanifold of $\YY$. Let $f:X\times B\to \YY$ be a smooth map.
		Suppose that $f\pitchfork \ZZ$.
		Then for almost every $b\in B$ the map
		\[f(-,b):X\times\{b\}\to \YY\]
		is transverse to $\YY.$
	\end{theorem}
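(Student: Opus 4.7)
The plan is to reduce the statement to Sard's theorem applied to the projection from the preimage of $\ZZ$ onto the parameter space $B$. (I note in passing that the conclusion as stated ought to read ``transverse to $\ZZ$''; I proceed under that reading.) First, the transversality hypothesis $f\pitchfork \ZZ$ guarantees that the preimage
\[
W := f^{-1}(\ZZ) \subseteq X \times B
\]
is a smooth submanifold of codimension equal to $\mathrm{codim}_\YY \ZZ$, with tangent space $T_{(x,b)}W = (df_{(x,b)})^{-1}(T_{f(x,b)}\ZZ)$ at any point $(x,b)\in W$. Let $\pi: W \to B$ be the restriction to $W$ of the second projection $X \times B \to B$.

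Next I would apply Sard's theorem to $\pi$: the set of critical values of $\pi$ has measure zero in $B$, so almost every $b \in B$ is a regular value. The remaining work is to show that if $b$ is a regular value of $\pi$, then the slice $f_b := f(-,b): X \to \YY$ is transverse to $\ZZ$.

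For this, fix such a $b$ and any $x$ with $f_b(x) \in \ZZ$, so $(x,b) \in W$. Regularity of $b$ means that $d\pi_{(x,b)}: T_{(x,b)}W \to T_bB$ is surjective, which is equivalent to
\[
T_{(x,b)}W + (T_xX \times \{0\}) = T_xX \times T_bB.
\]
Given any $v \in T_{f(x,b)}\YY$, the assumption $f \pitchfork \ZZ$ lets us write $v = df_{(x,b)}(u_1, w) + z_1$ with $(u_1,w)\in T_xX \times T_bB$ and $z_1 \in T_{f(x,b)}\ZZ$. By the displayed identity, we can find $(u_2, w) \in T_{(x,b)}W$ with the same $B$-component, and then $df_{(x,b)}(u_2,w) \in T_{f(x,b)}\ZZ$. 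Subtracting gives
\[
v = df_{(x,b)}(u_1 - u_2, 0) + z_2 = (df_b)_x(u_1 - u_2) + z_2
\]
for some $z_2 \in T_{f(x,b)}\ZZ$, which is exactly the required transversality of $f_b$ to $\ZZ$ at $x$.

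The substantive content is entirely in the linear algebra chase of the last paragraph; everything else is bookkeeping. The main potential obstacle is ensuring that $W$ really is a smooth manifold (so that Sard applies to $\pi:W\to B$), which follows from the implicit function theorem applied in coordinates around any point of $W$ using the transversality $f\pitchfork \ZZ$. Once $W$ has the right smooth structure and dimension, the argument above is forced.
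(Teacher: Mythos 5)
Your proof is correct. The paper states this result without proof, since it is the classical parametric transversality theorem (as in Guillemin--Pollack), and your argument is exactly the standard one: use $f\pitchfork \ZZ$ to realize $W=f^{-1}(\ZZ)$ as a submanifold with $T_{(x,b)}W=(df_{(x,b)})^{-1}(T_{f(x,b)}\ZZ)$, apply Sard's theorem to the projection $\pi\colon W\to B$, and check by the linear-algebra argument (which you carry out correctly) that regular values $b$ of $\pi$ are precisely parameters for which $f(-,b)$ is transverse to $\ZZ$; you are also right that the conclusion in the statement should read ``transverse to $\ZZ$'' rather than ``transverse to $\YY$'', which is a typo in the paper.
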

 We first prove a general ``almost-every $Z$" result. 
	\begin{lemma}\label{lem:high_codim_promotes_to_high_codim}
		The zero locus in the amplituhedron $\Ank$ of two different irreducible functionaries (as in \cref{def:functionary}) is of codimension 
		at least $2$ for almost all $Z.$
	\end{lemma}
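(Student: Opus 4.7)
The plan is to reduce the problem to a transversality statement in $\Gr_{m,n}$ (where $m=4$) and then to invoke Thom's Parametric Transversality Theorem (\cref{thm:thom}) with $Z$ playing the role of the parameter. By \cref{def:functionary}, each irreducible functionary $F_i$ is the pullback of an irreducible polynomial $G_i$ on $\Gr_{m,n}$ under the $Z$-dependent embedding $\iota_Z\colon \Gr_{k,k+m} \hookrightarrow \Gr_{m,n}$ that sends the twistor $\llrr{I}$ to the Pl\"ucker $\lr{I}$. Since $F_1 \ne F_2$ implies $G_1 \ne G_2$ and both are irreducibles, their zero loci $\widehat{V}_1, \widehat{V}_2 \subset \Gr_{m,n}$ are distinct irreducible hypersurfaces, so $\widehat{V} := \widehat{V}_1 \cap \widehat{V}_2$ has codimension at least $2$ in $\Gr_{m,n}$.

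The main technical step is to show that the total map
\[
\iota\colon \Gr_{k,k+m} \times \Mat^{>0}_{n,k+m} \to \Gr_{m,n}, \qquad (Y,Z) \mapsto \iota_Z(Y)
\]
is a smooth submersion. It suffices to verify surjectivity of the partial derivative with respect to $Z$ at every point $(Y,Z)$. After using a change of basis in $\R^{k+m}$ to bring $Y$ into the normal form $[I_k \mid 0]$ (absorbing the change of basis into $Z$), the formula $\llrr{I}(Y,Z) = \det(y_1, \ldots, y_k, Z_{i_1}, \ldots, Z_{i_m})$ reduces to the maximal $m \times m$ minor of the last $m$ columns of $Z$ indexed by $I$. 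As these last $m$ columns vary freely over $\Mat_{n,m}(\R)$, so do all the Pl\"ucker coordinates of $\iota_Z(Y)$, which establishes the submersion claim. In particular, $\iota$ is transverse to every smooth submanifold of $\Gr_{m,n}$.

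I would then stratify $\widehat{V}$ into finitely many smooth pieces (each of codimension at least $2$) and apply \cref{thm:thom} to $\iota$ restricted to each stratum. This yields that for almost every $Z \in \Mat^{>0}_{n,k+m}$, the map $\iota_Z\colon \Gr_{k,k+m} \to \Gr_{m,n}$ is transverse to every stratum of $\widehat{V}$, which forces $\iota_Z^{-1}(\widehat{V}) \subset \Gr_{k,k+m}$ to have codimension at least $2$.

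Finally, the zero locus in the amplituhedron of $F_1,F_2$ equals $\Ank \cap \iota_Z^{-1}(\widehat{V})$, and since $\Ank$ is a full-dimensional (dimension $mk$) semialgebraic subset of $\Gr_{k,k+m}$, intersecting it with a subset of codimension at least $2$ produces a subset of dimension at most $mk-2$, i.e.\ codimension at least $2$ in $\Ank$. The principal obstacle is the submersion check in the second paragraph; once that is in hand, the rest is a routine application of Thom's theorem combined with a dimension count.
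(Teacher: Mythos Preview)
Your proposal is correct and follows essentially the same strategy as the paper: reduce to a codimension-$2$ locus $\widehat V\subset\Gr_{4,n}$, show the total evaluation map (with $Z$ as parameter) is a submersion, and invoke Thom's parametric transversality. The only cosmetic difference is that the paper passes to the $B$-amplituhedron and parametrizes by $W=\operatorname{colspan}(Z)\in\Gr_{k+4,n}$, so that the relevant map becomes the tautological inclusion $\Gr_4(W')\hookrightarrow\Gr_{4,n}$ as $W'$ varies, which makes the submersion immediate; your direct computation with $\iota_Z$ after normalizing $Y=[I_k\mid 0]$ achieves the same end, and your explicit mention of stratifying $\widehat V$ is in fact more careful than the paper's treatment.
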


We know from \cite[Theorem 1.3]{GLS} 
 that all cluster variables are irreducible; therefore,
in light of \cref{def:functionary}, 
functionaries which correspond to cluster variables of $\Gr_{4,n}$ are irreducible.

 \begin{proof} 
		We will prove the lemma in the B-amplituhedron 
		(cf. {\cite[Definition 3.8]{karpwilliams}} (see also \cite[Definition 2.20]{even2023cluster})
		$\mathcal{B}_{n,k,4}(W)$, where $W$ is the column span of $Z$.
		This will imply the result for $\Ank$, since the map $f_Z$ of \cite[Proposition 2.21]{even2023cluster} (which combines \cite[Lemma 3.10 and Proposition 3.12]{karpwilliams}) is a diffeomorphism from a neighborhood of the $B$-amplituhedron to a neighborhood of $\Ank$. 
		The map between the two spaces takes the zero locus of an irreducible functionary to the zero locus of an irreducible polynomial in the Pl\"ucker coordinates of $\Gr_{4,n},$ and we consider its intersection with $\mathcal{B}_{n,k,4}(W)$.
		It will be enough to show that its intersection with $\Gr_4(W),$ for a generic $W\in \Gr_{k+4,n}$ is of codimension $2.$ We will use Thom's transversality.
		Let $\YY=\Gr_{4,n},$ and $\ZZ$ the intersection of zero loci of the two functions. Then $\ZZ$ is of codimension $2.$ Let $B$ be a small ball around $W\in \Gr_{k+4,n},$ and $X=\Gr_4(W).$ Identify the fiber bundle $F\to B$ whose fiber over $W'\in B$ is $\Gr_4(W')$ with $X\times B.$ This can be done since the two spaces are diffeomorphic, for $B$ small enough. The map $f:X\times B\to \YY$ is defined by
		\[f(V,W')=V,\]
		where $W'\in B,~V\in\Gr_4(W')$ and in the right hand side $V$ is considered as an element of $\Gr_{4,n}.$ Clearly $df_{V,W'}(T_{V,W'}X\times B)=T_{V}\Gr_{4,n},$ so that the assumption of Theorem \ref{thm:thom} is met. Thus, for almost every $W'\in B,$ the intersection $\Gr_4(W')\cap \ZZ$ is of codimension $2,$ hence the intersection with $\ZZ$ of the $B-$amplituhedron, for almost every $W,$ is of codimension at least $2.$
  \end{proof}
\begin{proof}[Proof of \cref{lem:mutable_vars_give_codim_2}]
The last statement follows from the first one, since
if $\xi$ is a mutable variable for $\gt{D},$ then the mutation relation has the form
 \[\xi\xi'=A+B,\]where $\xi$ is the variable of interest, and $A,B$ are products of other cluster variables. 
Moreover, by \cite[Proposition 9.27]{even2023cluster}, 
$A,B$ have the same sign on $\gto{D}$. Thus, the vanishing of $\xi$ implies the vanishing of at least one more cluster variable.

Every facet of $\gt{D}$ lies in the zero locus of a cluster variable, by \cref{thm:clusteradjacency}. By \cite[Theorem 11.3]{even2023cluster} we know that the cluster variables of $\gt{D}$ have a strongly positive expression, hence every such functionary either vanishes identically on a given boundary $\gt{S},$ for all positive $Z,$ or never vanishes there, for all positive $Z.$ Let $S_1,\ldots,S_N$ be the facets of $S_D$ which map to the zero locus of a single cluster variable.

From the previous lemma it follows that for almost all positive $Z$ the remaining faces of $S_D$ map to the union of finitely many codimension $2$ submanifolds of $\gt{D}.$ These submanifolds are contained in $\partial\gt{D},$ using \cref{lem:maps_to_bdry} and the fact that no cluster variable of $Z_D$ vanishes on $\gto{D}.$

 Denote by $\ZZ(\xi_1,\xi_2)\subset\gt{D}$ the vanishing locus of $\xi_1$ and $\xi_2$ in $\gt{D}.$ Let $S'_1,\ldots,S'_M$ be the faces of $D$  which map to $\ZZ(\xi_1,\xi_2).$
 Note that
 \begin{equation}\label{eq:containment_1}\ZZ(\xi_1,\xi_2)\subseteq\gt{D}\setminus\left(\gto{D}\sqcup_{i=1}^N\gto{S_i}\right)\subseteq\partial\gt{D}.\end{equation}
 For almost all positive $Z$, $\ZZ(\xi_1,\xi_2)$ is of codimension at least $2.$ 
	We will now show that for almost all positive $Z$ 
 \[\ZZ(\xi_1,\xi_2)\subseteq\sqcup_i\gt{S_i},\]
	together with \eqref{eq:containment_1} this implies, 
	that for almost all positive $Z,$ and every $j=1,\ldots, M$
 \begin{equation}\label{eq:containment_2_skel}
	 \gt{S'_j} \subseteq \sqcup_{i=1}^N \sqcup_{S'~\text{is a face of }S_i} \gt{S'},
 \end{equation}
 that is, the union of images of faces of $D$ of codimension at least $2.$

In order to show \eqref{eq:containment_1}, take an arbitrary $p\in \ZZ(\xi_1,\xi_2).$ We will show that every neighborhood $U$ of $p$ contains a point from $\bigcup_{i=1}^N\gto{S_i}.$
Indeed, assume without loss of generality that $U$ is connected, since $p$ belongs to the boundary of $\gt{D},$ we can find two points $q_0\in \gt{D}\cap U,~q_1\in U\setminus\gt{D}.$
We can find a path $(q_t)_{t\in[0,1]}\subset U$ from $q_0$ to $q_1$ in $U$ not passing throw the intersection of zero loci of any two different cluster variables, which we assume to be of codimension $2$ or more (see, e.g., the proof of \cite[Proposition 8.5]{even2021amplituhedron}). Let $t$ be the last time where $q_t\in \gt{D}.$ Then $q_t$ must be in the zero locus of a single cluster variable, hence in some $\gt{S_i}.$ 

Now, since \eqref{eq:containment_2_skel} holds for almost every positive $Z,$ and both its left hand and right hand are images of compact sets, it holds in fact for every positive $Z.$ Indeed, if $Z$ is the limit of $(Z_i)_{i=1}^\infty$ where for each $Z_i$ \eqref{eq:containment_2_skel} holds, it also holds for $Z.$

\end{proof}

\subsection{Facets of general BCFW tiles}\label{sec:BCFWfacets}
We now describe, without proof, the facets of general BCFW tiles in \cref{claim:facetsgenBCFW}. Instead of the recipe in \cref{def:recipe}, it is convenient to use a slightly different indexing set for BCFW tiles.

\begin{definition} \label{def:5tuples}
    Let $\rcp$ be a recipe with $k$ step-tuples, which is composed by a recipe $\rcp_L$ followed by a recipe $\rcp_R$ followed by a step-tuple
$((a_k, b_k, c_k, d_k, n_k),\pre_{I_k}, \cyc^{r_k}, \refl^{s_k})$. We introduce the following collection of $5$-tuples $\tilde{D}=\{(\tilde{a}_i,\tilde{b}_i,\tilde{c}_i,\tilde{d}_i,\tilde{n}_i)\}_{i=1}^k=\tilde{D}_L \cup \tilde{D}_R \cup \tilde{D}_k$ we call \emph{generalized chords} defined recursively as:
    \begin{itemize}
        \item $\tilde{D}_k=(\tilde{a}_k,\tilde{b}_k,\tilde{c}_k,\tilde{d}_k,\tilde{n}_k) = \refl^{s_k} \circ \cyc^{r_k} (a_k,b_k,c_k,d_k,n_k)$,
        \item $\tilde{D}_L =\refl^{s_k} \circ \cyc^{r_k} \tilde{D}'_L$ and $\tilde{D}_R =\refl^{s_k} \circ \cyc^{r_k} \tilde{D}'_R$,
    \end{itemize}
    where $\tilde{D}'_L$ (resp. $\tilde{D}'_R$) are the generalized chords for the recipe $\rcp_L$ (resp. $\rcp_R$).
\end{definition}

\begin{notation} \label{not:5tuples}
Given a BCFW cell $S_{\rcp}$, we will sometime label it as $S_{\tilde{D}}$ in terms of the corresponding generalized chords $\tilde{D}$. We denote by $\tilde{D}^{(j)}_L \cup \tilde{D}^{(j)}_R \cup \tilde{D}_j$ the generalized chords of the recipe $\rcp^{(j)}$ obtained from $\rcp$ by performing only the first $j$ step-tuples. Here $\tilde{D}^{(j)}_L$ (resp. $\tilde{D}^{(j)}_R$) are the generalized chords of $\rcp^{(j)}_L$ (resp. $\rcp^{(j)}_R$). 
\end{notation}

\begin{example} \label{ex:5tuples}
Consider the BCFW cell $S_{\rcp}$ of \cref{fig:bcfw_tile}.
Its generalized chords are: \\ $\tilde{D}=\{(6,7,8,9,3),(4,5,8,9,3),(3,2,1,12,10),(4,3,11,10,9)\}$. Its plabic graph is as in \cref{fig:5tuple}. 
\end{example}

\begin{figure}[h]
\centering
\includegraphics[height=2.2in]{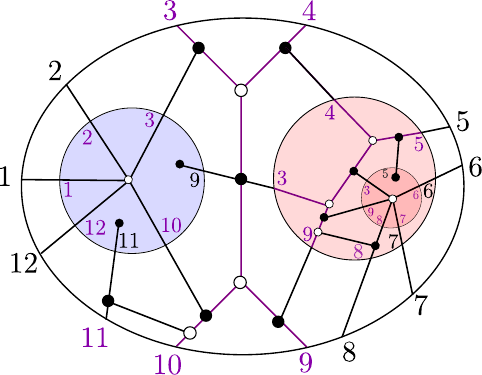}
\caption{}
\label{fig:5tuple}
\end{figure}

We introduce the definition of \emph{condensability} and \emph{condensations} of a BCFW cell $S_{\rcp}$ as follows.

\begin{definition}\label{def:condensability}
Let $S_{\tilde{D}} \subseteq\Gr^{\ge0}_{k,n}$ be a BCFW cell, and $\tilde{D}=\{\tilde{D}_i\}_{i=1}^k$ the corresponding generalized chords.
For $i\in[k], \tilde{f}_i\in\{\tilde{a}_i,\tilde{b}_i,\tilde{c}_i,\tilde{d}_i,\tilde{n}_i\}$ we say that $S_{\tilde{D}}$ is \emph{$\tilde{f}_i$-condensable} if either $\tilde{f}_i=\tilde{c}_i$ or 
\begin{equation*}
\tilde{f}_i=\begin{cases}
\tilde{a}_i\\
\tilde{b}_i\\
\tilde{d}_i\\
\tilde{n}_i
\end{cases} 
\mbox{and for all } \tilde{D}_j \in 
\begin{cases}
\tilde{D}^{(i)}_R\\
\tilde{D}^{(i)}_L\\
\tilde{D}^{(i)}_R\\
\tilde{D}^{(i)}_R
\end{cases},
\begin{cases}
\{\tilde{b}_i,\tilde{n}_i\}\\
\{\tilde{b}_i,\tilde{a}_i\}\\
\{\tilde{c}_i,\tilde{d}_i\}\\
\{\tilde{d}_i,\tilde{n}_i\}
\end{cases}
\not \subset \tilde{D}_j,
\end{equation*}
where we used \cref{not:5tuples}.
\end{definition}

\begin{example}\label{ex:condensable}
Consider the BCFW cell $S_{\rcp}$ of \cref{fig:bcfw_tile} and its generalized chords as in \cref{ex:5tuples}. The cell $S_{\rcp}$ is $\tilde{f}_i$-condensable for all $\tilde{f}_i$ except for $\tilde{f}_i\in \{\tilde{d}_2, \tilde{n}_2, \tilde{b}_4\}$. For example, the cell is not $\tilde{d}_2$-condensable because $\{\tilde{c}_2,\tilde{d}_2\}=\{8,9\} \subset \tilde{D}_1$, and $\tilde{D}_1$ is in $\tilde{D}^{(2)}_R$.
 
\end{example}

\begin{definition}\label{def:condensation}
  Let $S_{\rcp} \subseteq\Gr^{\ge0}_{k,n}$ be a BCFW cell, and $\tilde{D}=\{\tilde{D}_i\}_{i=1}^k$ the corresponding generalized chords. We define the \emph{$\tilde{f}_i$-condensation} $\partial_{\tilde{f}_i} S_{\rcp}$ of $S_{\rcp}$ to be the cell built using the recipe $\rcp$, but at the $i$-th BCFW product, we delete the edge $e_1$ if $\tilde{f}_i=\tilde{a}_i$; $e_2$ if $\tilde{f}_i=\tilde{b}_i$; $e_3$ if $\tilde{f}_i=\tilde{c}_i$; $e_4$ if $\tilde{f}_i=\tilde{d}_i$; and $e_5$ if $\tilde{f}_i=\tilde{n}_i$ as in \cref{fig:condensation}.

\end{definition}

\begin{figure}[h]
\centering
\includegraphics[height=2in]{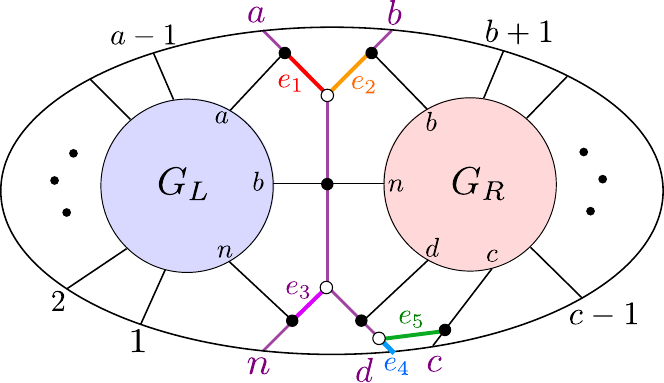}
\caption{}
\label{fig:condensation}
\end{figure}

\begin{definition}\label{def:rigid} Let $S_{\tilde{D}}$ be a general BCFW cell, with generalized chords $\tilde{D}=\{(\tilde{a}_j, \ldots, \tilde{n}_j)\}_{j=1}^k$. The $\tilde{f}_i$-condensation $\partial_{\tilde{f}_i} S_{\tilde{D}}$ of $S_{\tilde{D}}$
is \emph{rigid} if for all $\ell >i$, $\{\tilde{b}_\ell,\tilde{c}_\ell,\tilde{d}_\ell,\tilde{n}_\ell\}$ is coindependent (as in \cref{def:coind}) for $\partial_{\tilde{f}_i} S_{\tilde{D}^{(\ell)}_R}$, where $\tilde{D}^{(\ell)}_R$ is as in \cref{not:5tuples}. 
\end{definition}

Using the techniques of this paper, and extending the ones used for the standard BCFW tiles, the following statement can be shown.

\begin{claim}[Facets of general BCFW tiles]\label{claim:facetsgenBCFW}
    Let $S=S_{\tilde{D}}$ be a BCFW cell with recipe $\rcp$. If $S_{\tilde{D}}$ is  $\tilde{f}_i$-condensable and $S'=\partial_{\tilde{f}_i} S_{\tilde{D}}$ is rigid, then $Z_{S'}$ is a facet of $Z_{S}$. 
    
     Moreover, let $\rzeta_{i}$ be the coordinate cluster variable of $Z_{S}$ defined as
\begin{equation} \label{def:ftildezeta}
 \rzeta_{i}= \begin{cases}
 \ralpha^{\rcp}_i, \mbox{ if } \tilde{f}_i=\tilde{a}_i\\
\rbeta^{\rcp}_i, \mbox{ if }\tilde{f}_i=\tilde{b}_i\\
\rgamma^{\rcp}_i, \mbox{ if } \tilde{f}_i=\tilde{c}_i\\
\rdelta^{\rcp}_i, \mbox{ if } \tilde{f}_i=\tilde{d}_i\\
\repsilon^{\rcp}_i, \mbox{ if } \tilde{f}_i=\tilde{n}_i\\
\end{cases}  \mbox{ (see \cref{def:generalcluster}),}
\end{equation}
then the facet $Z_{S'}$ is cut out by the functionary $\rzeta_{i}(Y)$. Finally, all facets of $Z_S$ arise this way.
\end{claim}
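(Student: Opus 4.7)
The plan is to extend the proof of \cref{prop:standard_facets} by induction on the number $k$ of step-tuples in the recipe $\rcp$, using generalized chords $\tilde{D}$ to record how the interleaved $\cyc$, $\refl$, and $\pre$ operations reposition the butterfly indices. Writing $\rcp = \rcp_L,\rcp_R,((\ldots),\pre_{I_k},\cyc^{r_k},\refl^{s_k})$, one first needs to extend \cref{lem:maps_to_bdry} to general BCFW cells: every cell of $\partial S_{\tilde{D}}$ maps into $\partial Z_{\tilde{D}}$. This follows from injectivity of $\tZ$ on BCFW cells (\cref{thm:BCFW-tile-and-sign-description}) together with the sign description \cref{cor:cluster-sign-description}, which shows that boundary cells must lie on some cluster-variable hyperplane and therefore cannot meet the open tile. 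By \cite[Theorem 18.5]{postnikov}, any codimension-$1$ cell in $\overline{S_{\tilde{D}}}$ arises from removing a single edge of a reduced plabic graph $G = G_L \bcfw G_R$ (post-processed by $\pre_{I_k},\cyc^{r_k},\refl^{s_k}$); edges interior to $G_L$ or $G_R$ are handled by the inductive hypothesis and correspond to condensations $\partial_{\tilde{f}_i}$ for some $i<k$.

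For edges in the final butterfly, I would generalize \cref{thm:twistorvanish}. Applying the coindependence argument of \cref{lem:coin3} with sinks at $\{\tilde{a}_k,\tilde{b}_k,\tilde{n}_k\}$ on the $L$-side and $\{\tilde{b}_k,\tilde{c}_k,\tilde{d}_k,\tilde{n}_k\}$ on the $R$-side --- viewed \emph{before} the final $\cyc^{r_k}\refl^{s_k}$ is applied, so the standard butterfly picture of \cref{fig:extra2} is restored --- shows that any codimension-$1$ cell whose image has a single top-butterfly twistor vanishing must come from removing one of the five distinguished butterfly edges. This identifies the candidate facet uniquely as $\partial_{\tilde{f}_k}S_{\tilde{D}}$ for some $\tilde{f}_k \in \{\tilde{a}_k,\tilde{b}_k,\tilde{c}_k,\tilde{d}_k,\tilde{n}_k\}$. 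The five reducedness analyses of \cref{thm:showingreduced} --- ``sticky child,'' ``same-end child,'' ``head-to-tail'' --- translate, after conjugating by $\cyc^{r_k}\refl^{s_k}$, into the non-containment conditions of \cref{def:condensability}. For example, the sticky-child obstruction for deleting $x_6$ becomes $\{\tilde{b}_k,\tilde{n}_k\}\subset \tilde{D}_j$ for some descendant $\tilde{D}_j\in \tilde{D}^{(k)}_R$. Thus the condensed graph is reduced iff $S_{\tilde{D}}$ is $\tilde{f}_i$-condensable.

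The \emph{main obstacle} is verifying that $\partial_{\tilde{f}_i}S_{\tilde{D}}$ actually maps to a codimension-$1$ subset of $Z_{\tilde{D}}$, and this is where rigidity enters. One builds the facet by following $\rcp$ verbatim except that the $i$-th BCFW product is performed with a single butterfly parameter set to $0$, and then reruns the inductive injectivity argument of \cite[Theorem 7.7]{even2023cluster}. At each subsequent step $\ell > i$, that argument requires coindependence of $\{\tilde{b}_\ell,\tilde{c}_\ell,\tilde{d}_\ell,\tilde{n}_\ell\}$ for the current $R$-factor --- which is precisely the hypothesis of \cref{def:rigid}. Invoking \cite[Theorem 11.3]{even2023cluster} at each BCFW step and the degenerate variant of \cite[Lemma 11.13]{even2023cluster} at the $i$-th step, one concludes that $\tZ$ is injective on the degenerate cell, that the chosen twistor vanishes on its image, and that all other coordinate functionaries keep their signs --- forcing the image to have codimension exactly $1$ in $Z_{\tilde{D}}$.

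The identification of the facet with the zero locus of $\rzeta_i(Y)$ then follows from \cref{thm:clusteradjacency}: the vanishing butterfly twistor in the intermediate Grassmannian promotes, under iterated rescaled product promotion (\cref{def:rPsi,def:generalcluster}), to the functionary $\rzeta_i(Y)$ on $\Gr_{4,n}$. For completeness --- that all facets arise as condensations --- I would combine \cref{thm:clusteradjacency} (every facet lies in the zero locus of some $\rzeta \in \Irr(\rcp)$) with the generalization of \cref{lem:mutable_vars_give_codim_2}: if $\rzeta$ is mutable, its exchange relation in the ambient $\Gr_{4,n}$ cluster structure forces the simultaneous vanishing of another cluster variable on $\gt{\rcp}$, and the Thom-transversality argument of \cref{lem:high_codim_promotes_to_high_codim} (which uses only irreducibility of cluster variables, hence applies to all of $\Irr(\rcp)$) then yields codimension $\geq 2$ for almost every $Z$, and by compactness for every $Z$. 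This rules out mutable variables as facet cut-outs and, together with the uniqueness established above, exhausts the list.
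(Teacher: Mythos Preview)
Your proposal is essentially correct and follows the same strategy the paper announces for this result: the paper explicitly omits the proof of \cref{claim:facetsgenBCFW}, stating that it ``is similar to the proof of \cref{prop:standard_facets} in the standard BCFW case, but the technical details are much lengthier,'' and your outline is precisely such an extension --- generalizing \cref{lem:maps_to_bdry}, \cref{thm:twistorvanish}, and \cref{thm:showingreduced} via induction on step-tuples, with rigidity supplying the coindependence hypothesis needed to rerun the injectivity argument of \cite[Theorem 7.7]{even2023cluster}.

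Two small cautions. First, your claim that ``the condensed graph is reduced iff $S_{\tilde D}$ is $\tilde f_i$-condensable'' is slightly too strong as stated: already in the standard case (\cref{thm:showingreduced}, case~($\alpha$)) only one implication is proved, and the footnote there notes the converse may fail; what is actually needed for the claim is the forward implication (condensable $\Rightarrow$ reduced) together with the separate argument that non-condensable edge removals give images of codimension $\ge 2$. Second, your completeness argument invokes a ``mutable'' structure on $\Irr(\rcp)$ for general BCFW cells; the paper defines $\Mut$/$\Froz$ only for standard cells via $\Sigma_D$, and for general cells one instead argues (cf.\ the remark following \cref{claim:facetsgenBCFW}) that in the non-rigid case the relevant $\rzeta_i$ coincides with some $\ralpha_\ell$, so its vanishing forces a second coordinate functionary to vanish --- this is what feeds into the transversality argument of \cref{lem:high_codim_promotes_to_high_codim}, not a mutation relation per se.
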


\begin{remark}
It can be shown that in case $\partial_{\tilde\zeta_i}
S_{\tilde D}$ is not rigid, then for the minimal $l>i$ such that the condition in \cref{def:rigid} is not met, $\ralpha_l$ equals the BCFW coordinate $\bar\zeta_i$ of the $i$-th generalized chord which corresponds to $\tilde{f}_i$ according to \cref{def:ftildezeta}.   
\end{remark}

\begin{example}
Consider the example in \cref{fig:bcfw_tile}. 
 All the condensations of the condensable cases in \cref{ex:condensable} are rigid. Therefore $S_\rcp$ has $17$ facets and they are cut out by all the functionaries in \cref{ex:coord_clust}, 
 except for $\rdelta_2(Y),\repsilon_2(Y),\rbeta_4(Y)$, corresponding to the non-condensable cases in \cref{ex:condensable}.
\end{example}

We omit the proof of \cref{claim:facetsgenBCFW} as it is similar to the proof of \cref{prop:standard_facets} in the standard BCFW case, but the technical details are much lengthier.

\begin{remark}
    In the case of standard BCFW cells, the $\tilde{f}_i$-condensation is non-rigid only in the case of $\tilde{f}_i=\tilde{b}_i$ when $D_i$ is a sticky same-end child of a chord $D_p$. In this case, $\rbeta_i=\ralpha_p$ and $\rbeta_i=\ralpha_p=0$ does not cut out a facet.  
    The non-condensable cases correspond precisely to the remaining mutable variables $\Mut(D)$ (cf. \cref{def:frozmut}).
\end{remark}

\section{The spurion tile and tiling}\label{sec:spurion}

The amplituhedron $\Ank$ has a broad class of tiles, the \emph{BCFW tiles} (cf. \cref{def:BCFWtile}). 
Moreover, we can use BCFW tiles to tile $\Ank$ into a broad class of tilings, the \emph{BCFW tilings}, see \cite[Section 12]{even2023cluster}. 
We note that there are tilings made of BCFW tiles which are \emph{not} BCFW tilings (e.g. cf. \cite[Theorem 12.6]{even2023cluster}). However, there are also tiles which are \emph{not} BCFW tiles, and it turns out that they can also be used to tile $\Ank$. In this section we report the first example in the literature of a tiling containing a non-BCFW tile.

\subsection{Spurion tiles}
The simplest case of a tiling with non BCFW tiles is for $n=9$ and $k=2$, i.e. for $\mathcal{A}_{9,2,4}(Z)$. Consider the positroid cell $S_{sp} \subset \Gr_{2,9}^{\geq 0}$ with 
plabic graph in \cref{fig:spurion}. 

\begin{figure}[h!]
\centering
	\includegraphics[width=0.2\textwidth]{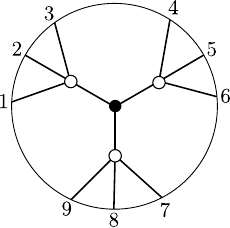}
	\caption{\label{fig:spurion} Plabic graph of the spurion cell $S_{{sp}} \subset \mbox{Gr}^{\geq 0}_{2,9}$.
	}
\end{figure}

A matrix $C_{sp}$ representing a point in $S_{sp}$ has triples of proportional columns whose labels are: $\{1,2,3\}, \{4,5,6\},\{7,8,9\}$. We denote such configuration of column vectors as $(123)(456)(789)$, see \cref{appendix:spurion}.
Therefore any such matrix representative  has rows of support at least $6$. We showed in \cite[Section 6]{even2023cluster} that points in a BCFW cells can be represented by matrices with at least one row of support $5$. 
Therefore, $S_{sp}$ is \emph{not} a BCFW cell and  we call it a \emph{spurion} cell. 
By writing a parametrization with functionaries, and applying techniques from \cite{even2023cluster}, it is possible to show that the amplituhedron map is injective on $S_{sp}$, hence $Z_{sp}:=\overline{\tilde{Z}({S}_{sp})}$ is a tile for $\mathcal{A}_{9,2,4}(Z)$, which we call a \emph{spurion} tile. 
This is an example of a non BCFW tile. 
Applying cyclic shifts to $S_{sp}$ ($Z_{sp}$), we can obtain two other spurion cells (tiles) for $\mathcal{A}_{9,2,4}(Z)$.

\subsection{A tiling containing the spurion} 
We are able to find a tiling $\mathcal{T}_{sp}$ of $\mathcal{A}_{9,2,4}(Z)$ containing a spurion tile. We report the collection of tiles in $\mathcal{T}_{sp}$ in \cref{appendix:spurion}. Moreover,  $\mathcal{T}_{sp}$ is a \emph{good}\footnote{meaning that internal facets of adjacent tiles match pairwise.} tiling of $\mathcal{A}_{9,2,4}(Z)$ and it is `close' to a good BCFW tiling $\mathcal{T}_{BCFW}$. We report the collection of $5$ tiles to substitute in order to go from $\mathcal{T}_{sp}$ to $\mathcal{T}_{BCFW}$ in \cref{appendix:spurion}. We present a sketch of a proof in \cref{skproof:spurion}.

\subsection{Spurion tiles and cluster algebras}
The spurion tile exhibits the same relationship to the cluster structure on $\Gr_{4,n}$ as BCFW tiles. Firstly, $Z_{sp}$ satisfies \emph{cluster adjacency} in \cite[Conjecture 7.17(i)]{even2023cluster}.
Indeed, $Z_{sp}$ has $9$ facets lying on the vanishing locus of the following collection $\mathcal{F}_{sp}$ of functionaries: $a_1(Y)=\llrr{1 23 | 65| 789 }$, $a_2(Y)=\llrr{1 23 | 64| 789 }$, $a_3(Y)=\llrr{1 23 | 54| 789 }$, together with their cyclic shifts $(\cyc^*)^3$ and $(\cyc^*)^6$. The functionaries (up to sign) in $\mathcal{F}_{sp}$ correspond to a collection $\AFacet(Z_{sp})$ of compatible cluster variables of $\Gr_{4,n}$ (see \cref{not:pluckfunc}). A seed $\tilde{\Sigma}_{sp}$ for $\mbox{Gr}_{4,n}$ containing $\AFacet(Z_{sp})$ was found in \cite[Figure 1]{Gurdougan2020ClusterPI}, see \cref{fig:spurionseed}.

\begin{figure}\centering 
  
\begin{tikzpicture}
  \tikzstyle{frozen} = [draw=blue,fill=none,outer sep=2mm]
  \foreach \ni in {0,...,8}{
    \coordinate (n\ni) at (30+40*\ni:2);
    \pgfmathsetmacro{\nl}{int(Mod(\ni+1,9)+1)};
    \node (a\ni)  at (n\ni)  {$a_{\nl}$};
  }

  \foreach \ni in {0,...,8}{
    \coordinate (n\ni) at (30+20+40*\ni:3.7);
    \pgfmathsetmacro{\nl}{int(\ni+1)};
    \node[frozen] (f\ni) at (n\ni) {$f_{\nl}$};
  }

  \foreach \ni in {0,...,2}{
    \coordinate (n\ni) at (30+120*\ni:3.3);
    \pgfmathsetmacro{\nl}{int(\ni+1)};
    \node (s\ni) at (n\ni) {\color{gray}$s_{\nl}$};
  }

  \foreach \ni in {0,...,8}{
    \draw[-latex] let \n1={int(mod(\ni+1,9))} in (a\ni) -- (a\n1);
  }

  \foreach \ni in {0,...,8}{
    \draw[-latex]   (f\ni) -- (a\ni);    
  }
  \foreach \ni in {3,4,6,7,9,1}{
    \draw[-latex] let \n1={int(mod(\ni+1,9))} in let \n0={int(mod(\ni,9))} in (a\n1) -- (f\n0);
  }

  \foreach \ni in {1,2,3}{
    \draw[-latex] let \n3={int(mod(3*\ni,9))} in let \n1={int(mod(\ni,3))} in (s\n1) -- (f\n3);
    \draw[-latex] let \n3={int(mod(3*\ni,9))} in let \n1={int(mod(\ni,3))} in(a\n3) -- (s\n1);
    \draw[-latex] let \n3={int(mod(3*\ni-1,9))} in let \n1={int(mod(\ni,3))} in (s\n1) -- (f\n3);
  }\,
\end{tikzpicture}    
\caption{The seed $\tilde{\Sigma}_{sp}$, where: $a_1=\lr{123|65|789}$, $a_2=\lr{123|64|789}$, $a_3=\lr{123|54|789}$, $a_4=\lr{789|23|456}$, $a_5=\lr{789|13|456}$, $a_6=\lr{789|12|456}$, $a_7=\lr{456|89|123}$, $a_8=\lr{456|79|123}$, $a_7=\lr{456|78|123}$, $s_1=\lr{1456}$, $s_2=\lr{1237}$, $s_1=\lr{4789}$. The functionaries $a_2(Y), a_5(Y), a_8(Y)$ are positive on $\gto{sp}$ and all the others are negative. }
\label{fig:spurionseed}
  \end{figure}

Moreover, the open spurion tile $Z^{\circ}_{sp} \subset \Gr_{2,6}$ is fully determined by the functionaries in $\mathcal{F}_{sp}$ having a definite sign (see \cref{fig:spurionseed}). Therefore, the coordinate cluster variables $\xx_{sp}$ are exactly the ones in $\AFacet(Z_{sp})$ (containing the functionaries that cut out the facets of $Z_{sp}$). Let $\txx_{sp}$ denote the extended cluster of $\tilde{\Sigma}_{sp}$. We observe that all functionaries $x(Y)$ with $x$ cluster variables in $\txx_{sp}$ have a definite sign on $Z_{sp}$. Furthermore, the seed obtained from $\tilde{\Sigma}_{sp}$ by freezing $\AFacet(Z_{sp})$ is a \emph{signed seed} \cite[Definition 9.22]{even2023cluster}, hence $Z_{sp}$ also satisfies the \emph{positivity test} in \cite[Conjecture 7.17(ii)]{even2023cluster}.

\begin{remark}[Relation to Physics]
Spurion cells first appeared in \cite[Table 1]{abcgpt}. They are informally called `spurion' by physicists because they correspond to Yangian invariants (see, e.g. \cite[Remark 4.6]{even2023cluster}) which have only spurious poles, i.e. poles which cancel in the sum when computing the scattering amplitude. Geometrically, this is reflected in the fact the spurion tile, contrary to general BCFW tiles, does not have any facet which lie on the boundary of the amplituhedron. 

It had been an open problem to determine whether tree-level scattering amplitudes in $\mathcal{N}=4$ SYM could be expressed in terms of the spurion. By showing the amplituhedron $\Ank$ has tilings comprising the spurion tile, we solve this problem. The spurion tiling corresponds to a new expression of scattering amplitudes, which can not be obtained from physics via BCFW recursions. 

\end{remark}

\subsubsection{Sketch of a proof for the tiling with spurion} \label{skproof:spurion}
We now sketch a proof that the spurion tiling of \cref{appendix:spurion} is indeed a tiling.
\begin{itemize}

\item 
Let $S_{\nu} \subset \mbox{Gr}_{2,6}^{\geq 0}$ be the $9$ dimensional positroid cell labelled by the affine permutation $\nu=\{2,6,4,5,8,7,9,12,10\}$.
$S_{\nu}$ has exactly $10$ facets $S_1,\ldots,S_{10} \subset \overline{S}_{\nu}$ that map injectively to $\mathcal{A}_{9,2,4}(Z)$, giving the tiles $Z_{S_1}, \ldots, Z_{S_{10}}$. $Z_{S_1}$ is a spurion tile, and the remaining nine are BCFW tiles, five of which, $Z_{S_6}, \ldots, Z_{S_{10}}$, are part of the BCFW tiling $\mathcal{T}_{\tiny BCFW}$.
We now perform a \emph{flip} on $\mathcal{T}_{\tiny BCFW}$ by replacing the tiles $\gt{S_6},\ldots, \gt{S_{10}}$ with $\gt{S_1},\ldots, \gt{S_{5}}$. Let $\mathcal{T}_{sp}$ be the resulting collection of tiles. We claim that $\mathcal{T}_{sp}$ is a tiling of  $\mathcal{A}_{9,2,4}(Z)$ (which contains the spurion tile $\gt{S_1}$). 

In order to show the claim, it is enough to prove that $\{\gto{S_i}\}_{i=1}^5$ are pairwise disjoint and that
\begin{equation}\label{eq:unionflip}
    \bigcup_{i=1}^5 \gt{S_i}=\bigcup_{i=6}^{10} \gt{S_i}.
\end{equation} Let $\mathcal{F}'$ ($\mathcal{F}''$) denote the left (right) hand side of \cref{eq:unionflip}.

\item The tiles $\gt{S_6},\ldots,\gt{S_{10}}$ have the following facets: $15$ `external' facets $\gt{B_1},\ldots, \gt{B_{15}}$, which cover the boundary of $\mathcal{F''}$; $10$ `internal' facets, each of which belongs to a pair of tiles among $\gt{S_6},\ldots,\gt{S_{10}}$ which lie on opposite sides of it. Similarly, the tiles $\gt{S_1},\ldots,\gt{S_{5}}$ have the same $15$ external facets $\gt{B_1},\ldots, \gt{B_{15}}$ and $10$ internal facets $\gt{B'_1},\ldots, \gt{B'_{10}}$, each of which belongs to a pair of tiles among $\gt{S_1},\ldots,\gt{S_{5}}$.

\item One can show that the functionaries vanishing on the internal facets serve as separating functionaries for all pairs of tiles in $\{\gt{S_i}\}_{i=1}^5$. In particular, if $\gt{B'_{i}}$ is a facet of both $\gt{S_j}$ and $\gt{S_{r}}$, one can show the facet functionary of $\gt{B'_{i}}$ has definite opposite sign on $\gto{S_j}$ and $\gto{S_{r}}$ by using the Cauchy-Binet expansion for twistors (see, for example, \cite[Lemma 2.16]{even2023cluster})  and Pl\"ucker relations. Moreover, using similar techniques, one can show that each external facet $\gt{B_i}$ belongs to a pair of tiles $\gt{S_{j'}} \subset \mathcal{F}'$ and $\gt{S_{j''}} \subset \mathcal{F}''$ and the corresponding facet functionary has definite same sign on $\gto{S_{j'}}$ and $\gto{S_{j''}}$.
\item 

The previous arguments and a topological argument shows that the collection $\{\gt{S_i}\}^5_{i=1}$ tiles $\mathcal{F}'$, whose boundary is $\partial\mathcal{F}''$. Moreover, locally both $\mathcal{F}'$ and $\mathcal{F}''$ lie on the same side of such boundary. Since $\mathcal{F}',\mathcal{F''}$ are of the same dimension of the amplituhedron, by standard algebraic topology arguments (e.g. those of \cite[Section 8]{even2021amplituhedron}), one can conclude that $\mathcal{F}'=\mathcal{F}''$. The claim follows. \qed
\end{itemize}

\section{Standard BCFW tiles as positive parts of cluster varieties}\label{sec:pospart}

In this section, we provide a birational map from $\Gr_{k, k+4}$ to a cluster variety $\Vcal_D$ which maps an open standard BCFW tile $\gto{D}$ bijectively to the positive part of $\Vcal_D$. The tile seed $\check{\Sigma}_D$ defining $\Vcal_D$ is quasi-homomorphic to the seed $\Sigma_D$ of 
\cite[Definition 9.8]{even2023cluster}. Throughout this section, we fix a chord diagram $D \in \CD $. In a mild abuse of notation, we use the terminology ``domino variable" also for the functionary $x(Y)$ corresponding to a domino cluster variable $x \in \xx(D)$.

First, recall we have two sets of functions which determine a point of the tile: the $5k$ coordinate functionaries and the $5k-t$ domino variables, where $t$ is the number of chords of $D$ which are sticky same-end children. It will be useful to express the coordinate functionaries of $\gto{D}$ in terms of the domino variables $\Irr(D)$. By definition, the coordinate functionaries are (signed) Laurent monomials in the domino variables. In the next proposition, we give explicit formulas for these Laurent monomials, up to sign. The signs may be computed using \cite[Proposition 8.10]{even2023cluster} and the fact that all coordinate functionaries are positive on the tile (cf. \cref{thm:BCFW-tile-and-sign-description}).

For a chord $D_i$ in a chord diagram $D$, we set
$E_i := \prod_{\ell} \repsilon_\ell$
where the product is over all ancestors of $D_i$ which contribute to the expression $| c_i~d_i \rchn_i n \rangle$ (cf. \cite[Notation 8.3]{even2023cluster}). We define $E_i'$ identically, but with the product over ancestors contributing to 
$| b_i~c_i \rchn_i n \rangle$ .

\begin{proposition}\label{prop:coord-fcnaries-in-domino-var}
	Let $D \in \CD$ be a chord diagram. Then we have the following expressions for the coordinate functionaries of $\gt{D}$ in terms of the domino variables:
	\[\calpha_i(Y)= \pm\frac{\ralpha_i}{E_i},\qquad \cbeta_i(Y)= \pm \frac{(\rbeta_i)(\ralpha_p)}{E_i}, \qquad \cdelta_i(Y) = \pm \frac{\rdelta_i (\ralpha_{p})}{E_i'}, \qquad \cepsilon_i(Y)= \pm \repsilon_i,\]
	\[\cgamma_i = \pm \frac{\rgamma_i (\ralpha_p)}{E_i (\rdelta_p)(\rbeta_j)(\repsilon_p)(\repsilon_g)}\]
	where $(\ralpha_p)$ appears if $D_i$ has a sticky parent $D_p$; $(\rbeta_i)$ appears \emph{unless} $D_i$ has a sticky and same-end parent; $(\rdelta_p)$ appears if $D_i$ has a same-end parent $D_p$; $(\rbeta_j)$ appears if $D_j$ is right head-to-tail sibling of $D_i$; $(\repsilon_p)$ appears if $(\rbeta_j)$ appears and $D_i$ has a sticky parent $D_p$ which is not same-end to $D_j$; and $(\repsilon_g)$ appears if $D_i$ has a same-end parent $D_p$ and $D_p$ has a sticky but not same-end parent $D_g$.
\end{proposition}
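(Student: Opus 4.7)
The plan is to proceed by induction on $k$, the number of chords in $D$, using the recursive structure of both the coordinate functionaries and the coordinate cluster variables together with the precise rescaling that distinguishes ordinary product promotion $\Psi_B$ from the rescaled version $\rPsi_B$. For $k=1$, the single chord $D_1 = (a,b,c,d)$ has no ancestors, parent, or siblings, so all conditional factors vanish and the claim reduces to $\check\zeta_1(Y) = \pm \bar\zeta_1$, which follows by direct comparison of the base cases of the two recursive definitions.

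For the inductive step, let $D_k = (a,b,c,d)$ be the rightmost top chord. The case $D = \pre_p D'$ is immediate from the induction hypothesis on $D'$. When $D = D_L \bcfw D_R$, the top-chord formulas hold trivially, since $D_k$ has no parent, no ancestors, and no right head-to-tail sibling. For $i < k$, \cref{def:generalcluster} gives $\bar\zeta_i^D = \rPsi_B(\bar\zeta_i^{L/R})$, while the analogous recursion for coordinate functionaries yields $\check\zeta_i^D(Y) = \pm\Psi_B(\check\zeta_i^{L/R}(Y))$, where $B = (a,b,c,d,n)$. The key relation, extracted from \cref{thm:promotion2} and \cref{def:rPsi}, is
\[
\Psi_B(x) \;=\; M_x \cdot \rPsi_B(x),
\]
with $M_x$ a Laurent monomial in $\mathcal{T'} = \{\llrr{abcn},\llrr{abcd},\llrr{bcdn},\llrr{acdn}\} = \{\rdelta_k(Y),\repsilon_k(Y),\ralpha_k(Y),\rbeta_k(Y)\}$. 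The exponents in $M_x$ are read off directly from \cref{def:product_promotion}: each occurrence of the index $b$ in a left cluster variable contributes a factor $\rbeta_k^{-1}$, each occurrence of $d$ or $n$ in a right cluster variable contributes $\rdelta_k^{-1}$ or $\repsilon_k^{-1}$, respectively. Substituting the induction hypothesis for $\check\zeta_i^{L/R}(Y)$, multiplying by $M_{\bar\zeta_i^{L/R}}$, and accounting for the analogous rescaling of the denominator factors (which are themselves domino variables in the subdiagram) produces a closed expression for $\check\zeta_i^D(Y)$ in terms of $\bar\zeta_i^D$ and the domino variables of its ancestors.

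To match the case analysis of the proposition, I would observe that the combinatorial conditions listed --- sticky parent $D_p$, same-end parent, sticky same-end parent, right head-to-tail sibling $D_j$, sticky non-same-end parent $D_g$ of a same-end parent $D_p$ --- are exactly the conditions under which one of the indices of $D_k$ appears with a specific multiplicity in the Pl\"ucker content of $\bar\zeta_i^{L/R}$. For example, $D_i$ acquiring a sticky parent $D_p$ at this step of the recursion forces the index $b$ (in the left formulas) to occur in the role of the substitution $b \mapsto (ba)\cap(cdn)/\lr{acdn}$, producing the extra $(\ralpha_p)$ factor in $\cbeta_i$, $\cdelta_i$, $\cgamma_i$. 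Likewise, the product $E_i = \prod_\ell \repsilon_\ell$ accumulates one factor of $\repsilon_\ell$ each time $D_i$ is promoted through an ancestor $D_\ell$ whose BCFW indices contribute to the chain $|c_i\,d_i\rchn_i n\rangle$ of \cite[Notation 8.3]{even2023cluster}, and similarly for $E_i'$. Checking each of $\calpha$, $\cbeta$, $\cgamma$, $\cdelta$, $\cepsilon$ in turn, and cross-referencing with the explicit domino formulas of \cite[Theorem 8.4]{even2023cluster}, yields the claimed identities up to sign; the signs are then determined by \cite[Proposition 8.10]{even2023cluster} together with positivity of coordinate functionaries on $\gto{D}$ (\cref{thm:BCFW-tile-and-sign-description}).

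The main obstacle is the case analysis, particularly for $\cgamma_i$, whose formula carries five independent conditional factors $(\ralpha_p)$, $(\rdelta_p)$, $(\rbeta_j)$, $(\repsilon_p)$, $(\repsilon_g)$ triggered by distinct ancestor and sibling configurations. Verifying that each configuration propagates correctly through the $D_L \bcfw D_R$ decomposition and interacts compatibly with the factors inherited from the induction hypothesis is a mechanical but intricate bookkeeping exercise; the combinatorial definitions of \emph{sticky}, \emph{same-end}, and \emph{head-to-tail} are tailored precisely so that the relevant cases are pairwise disjoint and jointly exhaust the possibilities.
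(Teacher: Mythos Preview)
Your approach is correct and essentially the same as the paper's. The paper does not spell out a proof but simply remarks that the result follows from the explicit domino formulas \cite[Theorem~8.4]{even2023cluster} together with \cite[Lemma~8.7]{even2023cluster} on factorization under promotion; your induction is a direct unpacking of that sketch, with the factorization lemma supplying the precise Laurent monomials $M_x$ (so your informal ``each occurrence of $b$ contributes $\rbeta_k^{-1}$'' is made rigorous there) and the explicit formulas pinning down which chord configurations trigger which conditional factors.
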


\cref{prop:coord-fcnaries-in-domino-var} can be proved using the explicit formulas for domino variables \cite[Theorem 8.4]{even2023cluster} and \cite[Lemma 8.7]{even2023cluster} on factorization under promotion.

\begin{example}\label{ex:coord-in-domino-var}For the chord diagram $D$ in \cref{cd-example}, the formulas for coordinate functionaries in terms of domino variables are:
\begin{center}
    {\renewcommand{\arraystretch}{1.4}
\begin{tabular}{|c|| c| c| c|c|c|}
 \hline
 $i$ & $\calpha_i$ & $\cbeta_i$  & $\cgamma_i$ & $\cdelta_i$ & $\cepsilon_i$\\ [0.5ex] 
 \hline \hline
 $1$ & $\frac{\ralpha_1}{\repsilon_3}$ & $-\frac{\rbeta_1}{\repsilon_3}$ & $\frac{\rgamma_1}{\rbeta_2\repsilon_3}$ & $-\frac{\rdelta_1}{\repsilon_3}$ & $\repsilon_1$ \\
 \hline
 $2$ & $-\ralpha_2$ & $\rbeta_2$ & $\frac{\rgamma_2}{\rdelta_3\rbeta_6}$ & $\frac{\rdelta_2}{\repsilon_3}$ & $\repsilon_2$ \\
 \hline
 $3$  & $-\ralpha_3$& $\rbeta_3$ & $-\frac{\rgamma_3}{\rbeta_6}$ & $\rdelta_3$ & $\repsilon_3$\\
  \hline
 $4$ & $\frac{\ralpha_4}{\repsilon_6}$ & $-\frac{\ralpha_5}{\repsilon_6}$ & $\frac{\rgamma_4 \ralpha_5}{\rdelta_5\repsilon_6^2}$ & $-\frac{\rdelta_4\ralpha_5}{\repsilon_5 \repsilon_6}$ & $\repsilon_4$\\
  \hline
$5$ & $-\frac{\ralpha_5}{\repsilon_6}$ & $\frac{\rbeta_5 \ralpha_6}{\repsilon_6}$ & $\frac{\rgamma_5\ralpha_6}{\repsilon_6}$ & $-\frac{\rdelta_5 \ralpha_6}{\repsilon_6}$ & $\repsilon_5$ \\
\hline
 $6$ &  $\ralpha_6$ & $-\rbeta_6$ & $\rgamma_6$ & $-\rdelta_6$ & $\repsilon_6$\\
  \hline
\end{tabular}}
\end{center}

\end{example}

Note that both the set of domino variables and the set of coordinate functionaries give redundant descriptions of the tile, which is $4k$ dimensional. We will use \cref{lem:good-scaling} to 
rescale the domino variables $\xx(D)$ by (signed) Laurent monomials in $\Froz(D)$
to obtain $4k$ ``tile variables." The tile variables form a coordinate system for $\gto{D}$, are positive on $\gto{D}$, and will comprise the cluster variables of $\check{\Sigma}_D$.

We perform this scaling in two steps. First, for a domino variable $\rzeta_i(Y)$, let $s$ be the sign of $\rzeta_i(Y)$ on the open tile $\gto{D}$ (cf. 
\cite[Proposition 8.10]{even2023cluster}
) and define the \emph{signed domino variable} as 
$\szeta_i(Y):= s \cdot \rzeta_i(Y)$. 
Note that each coordinate functionary is a Laurent monomial in the signed domino variables, given by the formulas in \cref{prop:coord-fcnaries-in-domino-var} by replacing each domino variable with a signed domino variable and deleting the signs. We denote by $\hxx(D)$ the set of signed domino variables.

The second step of the scaling is more involved. The next proposition identifies the correct scaling factor $m(\szeta_i)$ for each signed domino variable $\szeta_i$, which will be a Laurent monomial in the $\sgamma_i$. The proof of this proposition gives an algorithm to determine the scaling factor.

We use the notation $\PP[X]$ to denote the group of Laurent monomials in the variables $X$. 
\begin{lemma}\label{lem:good-scaling}
	Let $\Gamma:= \{\sgamma_i: D_i \text{ does not have a sticky same-end parent}\}$. There exists a unique group homomorphism $m: \PP[\hxx(D)] \to \PP[\Gamma]$ such that 
	\begin{enumerate}
		\item for $\sgamma_i \in \Gamma$, $m(\sgamma_i)$ is $ \sgamma_i^{-1}$.
		\item for each $i \in [k]$, the image $m(\czeta_i)$ of the coordinate functionary $\czeta_i(Y)$ is equal for all $\czeta \in \{\alpha, \beta, \gamma, \delta, \epsilon\}$.
	\end{enumerate}

Moreover, the degree of $m(\szeta_i)$ in twistor coordinates is equal to the degree of $\szeta_i^{-1}$ in twistor coordinates for all $\szeta_i \in \hhxx(D)$. 
\end{lemma}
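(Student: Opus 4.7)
I would construct $m$ inductively by processing the chords of $D$ in a top-down order, reading off both existence and uniqueness from the triangular structure of the defining system. Since $\PP[\hxx(D)]$ is the free abelian group on the signed domino variables, specifying $m$ amounts to giving $m(\szeta_i) \in \PP[\Gamma]$ for each signed domino. Let $t$ be the number of sticky same-end children in $D$; then $|\hxx(D)| = 5k - t$ and $|\Gamma| = k - t$. Conditions (2) give $4$ equations per chord ($4k$ total), and condition (1) gives $k - t$ equations, matching the $5k - t$ unknowns.

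I would process chords so that all ancestors of $D_i$ are handled before $D_i$. At each $D_i$, introduce the auxiliary value $v_i := m(\cepsilon_i)$, which by (2) equals all of $m(\calpha_i), m(\cbeta_i), m(\cdelta_i), m(\cgamma_i)$. Using \cref{prop:coord-fcnaries-in-domino-var}, these equations successively isolate $m(\salpha_i), m(\sbeta_i), m(\sdelta_i), m(\sgamma_i)$ as $v_i$ times a known Laurent monomial in ancestor quantities (the $\sepsilon_\ell$ in $E_i$ and $E_i'$, the $\salpha_p$ if $D_i$ has a sticky parent, and the various optional factors in the $\cgamma_i$ formula). In the generic case $\sgamma_i \in \Gamma$, so condition (1) forces $m(\sgamma_i) = \sgamma_i^{-1}$, pinning $v_i$ down uniquely via the $\cgamma_i$ equation. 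In the sticky same-end case, $\sbeta_i = \salpha_p$ is not a separate unknown; the equation $m(\cbeta_i) = v_i$ reads off $v_i$ directly from $m(\salpha_p)$, and the remaining three equations determine $m(\salpha_i), m(\sdelta_i), m(\sgamma_i)$.

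The main subtlety is navigating the conditional structure of the formula for $\cgamma_i$: the factors $\rdelta_p, \rbeta_j, \repsilon_p, \repsilon_g$ appear under specific combinations of parent/child/sibling relations, and each subcase must be tracked to confirm that the derived values genuinely lie in $\PP[\Gamma]$. I expect the hardest verification to be in the sticky same-end case, where $\sgamma_i \notin \Gamma$: one must check that the formula determining $m(\sgamma_i)$ produces no $\sgamma_i$-component, so that the recursion closes inside $\PP[\Gamma]$. This reduces to an identity among the combinatorial factors of \cref{prop:coord-fcnaries-in-domino-var}, which in turn traces back to the promotion identities of \cite[Lemma 8.7]{even2023cluster}.

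For the degree assertion, I would run a parallel induction. The degree in twistor coordinates of each $\czeta_i(Y)$ is computable from \cref{prop:coord-fcnaries-in-domino-var} together with the degree formulas for signed dominoes in \cite[Theorem 8.4]{even2023cluster}. Since $m$ identifies all five $m(\czeta_i)$, the degree of $v_i$ is forced, and the degrees of $m(\salpha_i), m(\sbeta_i), m(\sdelta_i), m(\sgamma_i)$ follow from their defining equations. Assuming inductively that $\deg m(\szeta_\ell) = -\deg \szeta_\ell$ for all ancestor signed dominoes, a direct count confirms the same equality for each signed domino of $D_i$, closing the induction.
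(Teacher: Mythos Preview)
Your approach is essentially the paper's: both construct $m$ inductively on the signed domino variables, using \cref{prop:coord-fcnaries-in-domino-var} to write each coordinate functionary as the corresponding signed domino times a Laurent monomial in already-determined variables, then invoking condition (1) (when $\sgamma_i\in\Gamma$) or the $\cbeta_i$-equation (when $D_i$ has a sticky same-end parent) to pin down the common value $m(\czeta_i)$, and hence each $m(\szeta_i)$. The degree argument also matches.

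One correction is needed. Your processing order, ``ancestors before descendants,'' is slightly too weak: the formula for $\cgamma_i$ in \cref{prop:coord-fcnaries-in-domino-var} may involve $\sbeta_j$ where $D_j$ is the \emph{right head-to-tail sibling} of $D_i$, and siblings are not ancestors. The paper instead processes $i=k,k-1,\dots,1$; since chords are indexed by increasing end and a right head-to-tail sibling ends later, this guarantees $j>i$ is handled first. With that order your argument goes through. Conversely, the verification you flag as hardest---that in the sticky same-end case the derived $m(\sgamma_i)$ lands in $\PP[\Gamma]$---is automatic: every value produced is a Laurent monomial in previously-determined $m$-values, all in $\PP[\Gamma]$ by induction, together with (in the generic case only) a single factor $\sgamma_i^{-1}\in\PP[\Gamma]$.
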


\begin{proof}
	A group homomorphism is uniquely determined by the images of $\hxx(D)$. We will determine $m$ on the signed domino variables $\szeta_i(Y)$ for $i=k, k-1, \dots, 1$, in that order. For the rest of this proof, ``degree" means ``degree in twistor coordinates."
	
	We begin with the signed domino variables for the chord $D_k$. Note that $\sgamma_k \in \Gamma$ since $D_k$ is a top chord. So (1) is satisfied if and only if $m(\sgamma_k)= \sgamma_k^{-1}$. Since $D_k$ is a top chord, \cref{prop:coord-fcnaries-in-domino-var} implies that $\szeta_k$ is equal to the coordinate functionary
	$\czeta_k$. Thus (2) is satisfied if and only if $m(\szeta_k)= \sgamma_k^{-1}$ for $\zeta \in \{\alpha, \beta, \gamma, \delta, \epsilon\}$. We see that when (1) and (2) hold, the degree of $\szeta_k^{-1}$ is equal to the degree of $\sgamma_k^{-1}$.
	
	Now, assume for all $\ell >i$ and all signed domino variables $\szeta_\ell$ that there is a unique choice of image $m(\szeta_\ell)$ so that (1) and (2) hold for $\ell$, and the statement about degrees holds. We will show that there is also a unique choice of each image $m(\szeta_i)$ so that (1) and (2) also hold for $i$, and that for this choice, the statement about degrees holds.

	\noindent \textbf{Case 1}: If $\sgamma_i \notin \Gamma$ then (1) is vacuously true. Since $D_i$ is a sticky same-end child of its parent $D_p$, we see from 
	\cref{prop:coord-fcnaries-in-domino-var} 
	that the coordinate functionary $\cbeta_i$ is a Laurent monomial in signed domino variables $\szeta_\ell$ where $\ell >i$. Thus the image $m(\cbeta_i)$ is determined by the values of $m(\szeta_\ell)$. For (2) to hold, we must have $m(\cbeta_i)=m(\czeta_i)$ for all other coordinate functionaries $\czeta_i$. Again by \cref{prop:coord-fcnaries-in-domino-var}, $\czeta_i= \szeta_i \cdot M$ where $M$ is a Laurent monomial in signed domino variables for $\ell >i$. So (2) holds if and only if $m(\szeta_i)= m(\beta_i)/m(M).$
	 
	 For the statement about degrees, notice first that the coordinate functionaries $\czeta_i$ are degree 1, because they are promotions of twistor coordinates and promotion preserves degree. The assumption on the degrees of $m(\szeta_\ell)$ implies that the degree of $m(\cbeta_i)$ is -1. Since $\zeta_i= \szeta_i \cdot M$, the degree of $\szeta_i$ is $1- \deg(M)$. On the other hand, $m(\szeta_i)= m(\beta_i)/m(M)$ implies that the degree of $m(\szeta)$ is $-1 -\deg m(M)$, which is equal to $-1+\deg(M)$ by the assumption on the degrees of $m(\szeta_\ell)$. So we have the desired equality of degrees.
	
	\noindent \textbf{Case 2}: If $\sgamma_i \in \Gamma$, then (1) holds if and only if $m(\sgamma_i)= \sgamma_i^{-1}$. The statement about degrees clearly holds for $\sgamma_i$. The choice of $m(\sgamma_i)$ completely determines the image $m(\cgamma_i)$ of the coordinate functionary $\cgamma_i$, using \cref{prop:coord-fcnaries-in-domino-var}. Similar reasoning as the above case shows that there is a unique choice of $m(\szeta_i)$ so that (2) holds, and that the statement about degrees holds for this choice.
	
\end{proof}

\begin{definition}[Tile variables and seeds]\label{def:tilevars}
	Let $m$ be as in \cref{lem:good-scaling}. For each signed domino variable $\szeta_i(Y) \in \hxx(D) \setminus \Gamma$, we define the \emph{tile variable} as $\tzeta_i(Y) := m(\szeta_i(Y)) \cdot \szeta_i(Y)$. We denote by $\hhxx(D)$ the set of tile variables. 
 We define the \emph{tile seed} $\check{\Sigma}_D=(\hhxx(D), \hhQ_D)$  as the seed obtained from $\Sigma_D$ by deleting $\{\rgamma_i:\rgamma_i \notin \Gamma\}$, and replacing each domino variable $\rzeta_i$ by the corresponding tile variable $\tzeta_i(Y)$. Finally, we let $\mathcal{A}(\check{\Sigma}_D)$ be the associated cluster algebra, which we call \emph{tile cluster algebra}.
\end{definition}

Each tile variable is positive on $\gto{D}$, there are exactly $4k~=~\dim \gto{D}$ tile variables, and each tile variable is degree 0 in the twistor coordinates. It will sometimes be convenient to extend the definition of tile variables to $\szeta_i \in \Gamma$; in this case $\tzeta_i(Y) := 1.$

\begin{example}[Tile cluster variables] 
\label{tile-variables-formulas}
For the chord diagram $D$ in \cref{cd-example}, the domino variables 
$$ \ralpha_2,\; \ralpha_3,\;  
\ralpha_5 = 
\rbeta_4,\;  \rbeta_1,\;  \rbeta_6,\;  \rgamma_2,\;  \rdelta_1,\;  \rdelta_5,\;  \rdelta_6 $$
are negative on the tile $\gto{D}$, and all others are positive (cf. \cref{ex:stdBCFW_postest}). So the signed domino variable $\szeta_i$ coincides with the domino variable $\rzeta_i$ unless $\rzeta_i$ is one of the variables listed above.
To obtain the tile cluster variable $\tzeta_i$ for $D$, multiply $\szeta_i$ by the monomial $m(\szeta_i)$ listed in the table below. 
\setlength{\tabcolsep}{2pt}

\begin{center}
    {\renewcommand{\arraystretch}{1}
\begin{tabular}{|c|| c| c| c|c|c|}
 \hline
 $i$ & $m(\salpha_i)$ & $m(\sbeta_i)$  & $m(\sgamma_i)$ & $m(\sdelta_i)$ & $m(\sepsilon_i)$\\ [0.5ex] 
 \hline \hline
 $1$ & ${\sgamma_2\sgamma_6}({\sgamma_1\sgamma_3})^{-1}$ & ${\sgamma_2\sgamma_6}({\sgamma_1\sgamma_3})^{-1}$ & ${\sgamma_1}^{-1}$ & ${\sgamma_2\sgamma_6}({\sgamma_1\sgamma_3})^{-1}$ & ${\sgamma_2}{\sgamma_1}^{-1}$ \\
 \hline
 $2$ & ${\sgamma_3}({\sgamma_2\sgamma_6})^{-1}$ & ${\sgamma_3}({\sgamma_2\sgamma_6})^{-1}$ & ${\sgamma_2}^{-1}$ & ${\sgamma_2}^{-1}$ & ${\sgamma_3}({\sgamma_2\sgamma_6})^{-1}$ \\
 \hline
 $3$  & ${\sgamma_6}({\sgamma_3})^{-1}$& ${\sgamma_6}({\sgamma_3})^{-1}$ & ${\sgamma_3}^{-1}$ & ${\sgamma_6}({\sgamma_3})^{-1}$ & ${\sgamma_6}({\sgamma_3})^{-1}$\\
  \hline
 $4$ & $({\sgamma_5 \sgamma_6})^{-1}$ & $({\sgamma_5 \sgamma_6})^{-1}$ & $({\sgamma_5 \sgamma_6})^{-1}$ & ${\sgamma_5 }^{-1}$ & ${\sgamma_5 }^{-1}$\\
  \hline
$5$ & $({\sgamma_5 \sgamma_6})^{-1}$ & ${\sgamma_5 }^{-1}$ & ${\sgamma_5 }^{-1}$ & ${\sgamma_5 }^{-1}$ & ${\sgamma_5 }^{-1}$ \\
\hline
 $6$ &  ${\sgamma_6}^{-1}$ & ${\sgamma_6}^{-1}$ & ${\sgamma_6}^{-1}$ & ${\sgamma_6}^{-1}$ & ${\sgamma_6}^{-1}$\\
  \hline
\end{tabular}}

The tile seed $\check{\Sigma}_D$ is displayed on the left in \cref{fig:tile_seed}.
\end{center}
\end{example}

\begin{figure}
    \centering
\includegraphics[width=0.9\textwidth]{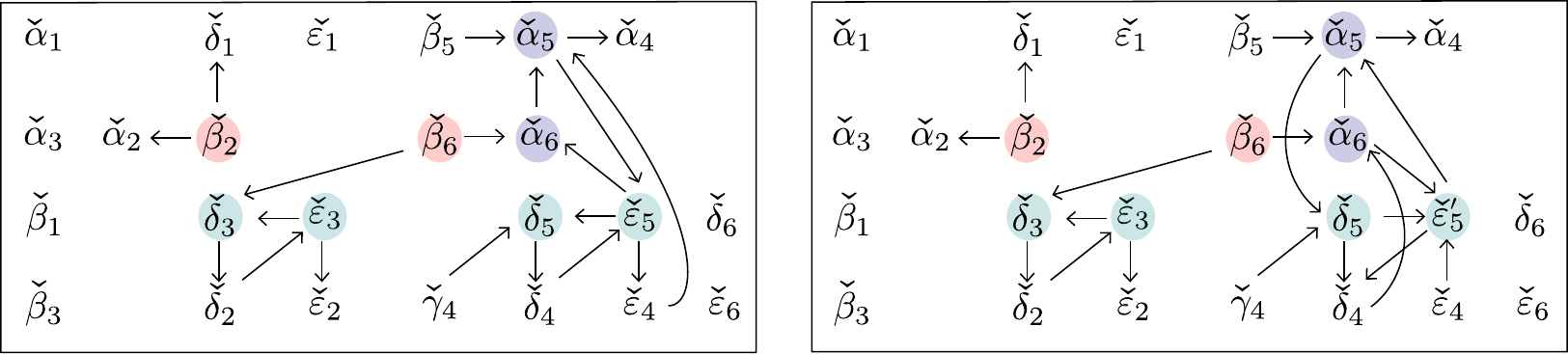}
    \caption{(Left): the tile seed $\check{\Sigma}_D$ for $D$ in \cref{cd-example}. See \cref{domino-variables-formulas,tile-variables-formulas} for the formulas for the tile variables $\tzeta_i$. (Right): the mutation of $\check{\Sigma}_D$ at $\tepsilon_5$.}
    \label{fig:tile_seed}
\end{figure}

As the next result shows, the tile variables give coordinates on the open tile.

\begin{proposition}\label{prop:tile-biregular-pos-reals}
	The map $f: \gto{D} \mapsto \R_+^{\hhxx(D)}$ sending a point $Y \mapsto (\tzeta_i(Y))$ to its list of tile variables is a bijection.
\end{proposition}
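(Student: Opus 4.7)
The plan is to prove this by induction on the number of chords $k$ of $D$, following the recursive construction of standard BCFW cells in \cref{def:standardfromCD}. The base case $k=0$ is immediate since $\gto{D}$ is a single point and $\hhxx(D)$ is empty. For the inductive step, we split according to whether $D$ is obtained from $D'$ by inserting a zero column at the penultimate marker (case 1a) or as a BCFW product $D_L \bcfw D_R$ at a rightmost top chord $D_k$ (case 1b). Case 1a reduces transparently to the inductive hypothesis, since $\pre_p$ does not alter the tile or its tile variables in any substantive way. The bulk of the work is case 1b.

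For case 1b, the strategy is to exhibit an explicit inverse $g \colon \R_+^{\hhxx(D)} \to \gto{D}$. The tile variables of $D$ split, up to product promotion $\rPsi$, into the tile variables of $D_L$, the tile variables of $D_R$, and four additional tile variables associated to the new top chord $D_k$ (one of the five candidate new variables $\{\talpha_k,\tbeta_k,\tgamma_k,\tdelta_k,\tepsilon_k\}$ is always absorbed, either because $\sgamma_k\in\Gamma$ and so $\tgamma_k\equiv 1$, or because of an identification such as $\tbeta_k=\talpha_p$ in the sticky-same-end case, cf.\ \cref{rmk:8_2}). Given positive values $(v_x)\in\R_+^{\hhxx(D)}$, one extracts the subvectors corresponding to $\hhxx(D_L)$ and $\hhxx(D_R)$ (undoing product promotion), invokes the inductive bijections $f_L^{-1},f_R^{-1}$ to recover $Y_L\in\gto{D_L}$ and $Y_R\in\gto{D_R}$, and then uses the four new positive tile-variable values to perform the BCFW tile product, producing the unique $Y\in\gto{D}$ with $\tilde{Z}^{-1}(Y)=\tilde{Z}^{-1}(Y_L)\bcfw\tilde{Z}^{-1}(Y_R)\in S_D$. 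Injectivity of $f$ follows in parallel by reading off the twistor-matrix entries from the tile variables: by property (2) of \cref{lem:good-scaling}, the common gauge factor $M_i$ cancels in every ratio $\czeta_i/\czeta'_i$ within a fixed chord block, so these ratios (and hence the twistor matrix of \cref{thm:BCFW-tile-and-sign-description}) are well-defined Laurent monomials in tile variables, determining $Y$ uniquely.

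The main obstacle lies in showing that product promotion $\rPsi$ interacts correctly with the rescaling $m$ defining tile variables: one must verify that each tile variable $\tzeta_i$ of $D_L$ (say), when viewed as a functionary on $\gto{D}$ via $\rPsi$, differs from the corresponding tile variable of $D$ by exactly the compensating factor predicted by applying \cref{lem:good-scaling} to $D$, and similarly for $D_R$. This in turn requires tracking how the new top-chord frozen variables and the new $\sgamma_k$ enter the Laurent monomials $m(\szeta_i)$ for $i<k$. A careful case analysis is needed when $D_k$ has same-end or sticky children, since these are precisely the degenerate cases in \cref{prop:coord-fcnaries-in-domino-var} where the relationship among tile variables, signed domino variables, and coordinate functionaries is most intricate; checking that the dimension-counting $4k=|\hhxx(D)|$ is exactly matched by the four new genuine degrees of freedom contributed by $D_k$ is the crucial combinatorial bookkeeping.
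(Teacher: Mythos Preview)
Your inductive approach is different from the paper's, which is direct and non-inductive. The paper constructs the inverse explicitly: given $p \in \R_+^{\hhxx(D)}$, it evaluates the Laurent monomials of \cref{prop:coord-fcnaries-in-domino-var} on $p$ (setting $\tgamma_i:=1$ for $\sgamma_i \in \Gamma$) to produce a point $F(p) \in (\R_+)^{5k}$, forms the BCFW matrix $M_p := \mtx_D(F(p))$, and sets $Y_p := \tZ(M_p)$. It then verifies $f(Y_p)=p$ by a descending loop $i=k,k-1,\dots,1$, using only property~(2) of \cref{lem:good-scaling} and the fact from \cref{thm:BCFW-tile-and-sign-description} that the coordinate functionaries of $\tZ(\mtx_D(q))$ recover the entries of $q$ up to a global scalar per row block. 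Your injectivity argument is essentially the same as the paper's uniqueness argument.

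The gap in your plan is exactly the obstacle you flag but do not resolve: the tile variables $\tzeta_i^D$ for $i<k$ are \emph{not} simply the product promotions of the tile variables $\tzeta_i^{L}$ or $\tzeta_i^{R}$. The rescaling $m$ of \cref{lem:good-scaling} is global to $D$: the algorithm in its proof computes $m(\szeta_i)$ from the values $m(\szeta_\ell)$ for all $\ell>i$, so in particular $m(\szeta_i)$ for a chord in $D_L$ or $D_R$ already involves $\sgamma_k$, which does not exist in the subdiagram. Thus your step ``extract the subvectors corresponding to $\hhxx(D_L)$ and $\hhxx(D_R)$ (undoing product promotion)'' is not well-defined as stated; making it precise would require proving a compatibility statement between $m_D$ and $\rPsi \circ m_{D_L}$ (and similarly for $D_R$), chord by chord, across all the degenerate cases of \cref{prop:coord-fcnaries-in-domino-var}. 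This is exactly the ``careful case analysis'' you defer, and it is the entire content of the surjectivity direction. The paper's approach sidesteps this completely: property~(2) of $m$ says precisely that $m(\czeta_i)$ is independent of $\zeta$, so one can pass freely between tile variables and (projectivized) coordinate functionaries within each chord block without ever decomposing $D$ or invoking promotion.

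A minor point: your parenthetical about $\tbeta_k=\talpha_p$ is misplaced, since $D_k$ is a top chord and has no parent; the reason only four new tile variables appear is simply that $\sgamma_k \in \Gamma$ always, so $\tgamma_k \equiv 1$.
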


\begin{proof}
 	
	We first show that each point in $\R_+^{\hhxx(D)}$ has a preimage in $\gto{D}$. 
Recall that \cref{prop:coord-fcnaries-in-domino-var} gives formulas for each coordinate functionary $\zeta_i(Y)$ as a Laurent monomial $N_{\zeta_i}(\szeta_j(Y))$ in the signed domino variables. 
	We define a Laurent monomial map $$F: \R_+^{\hhxx(D)} \to (\R_+)^{5k}$$ sending $(\tzeta_i) \in \R_+^{\hhxx(D)}$ to $(\zeta'_i:=N_{\zeta_i}(\tzeta_j))$, where the latter set ranges over all coordinate functionaries.  That is, we evaluate the Laurent monomials $N_{\zeta_i}$ for coordinate functionaries in terms of signed domino variables on the tuple $(\tzeta_i)$. (We set $\tzeta_j:=1$ if $\szeta_j(Y) \in \Gamma$.) For a point $p \in \R_+^{\hhxx(D)}$, define $M_p:= \mtx_D(F(p))$ to be the BCFW matrix using $\{\zeta'_i\}$ as BCFW coordinates. We claim that $Y_p:=\tZ(M_p) \in \gto{D}$ is a preimage of $p$ under $f$. That is, the tile variables of $Y_p$ are precisely $p$.
	
	Recall that the rowspan of the BCFW matrix depends only on the projection of $F(p)$ to $(\chR)^k$. We define a vector $q \in (\R_+)^{5k}$ whose entries are $\zeta'_i$ if $D_i$ has a sticky same-end parent and are $\zeta'_i/\gamma'_i$ otherwise. By construction, $q$ and $F(p)$ project to the same point.
	So the rowspan of $M_p$ is equal to the rowspan of $\mtx_D(q)$, and thus (the rowspan of) $Y_p$ is also equal to (the rowspan of) $Y_q:=\tZ(\mtx_D(q))$. \cref{thm:BCFW-tile-and-sign-description}, and in particular the proof of \cite[Proposition 11.15]{even2023cluster},  
 implies that the coordinate functionaries of $Y_q$ are exactly equal to the BCFW coordinates of $\mtx_D(q)$; that is, the coordinate functionaries of $Y_q$ are the entries of the vector $q$. Moreover, the twistor coordinates of $Y_p$ and $Y_q$ differ by a global scalar. Because coordinate functionaries are degree 1 in twistors, the coordinate functionaries of $Y_p$ and $Y_q$ also differ by a global scalar. So $\zeta_i(Y_p)=c \cdot \zeta'_i/\gamma'_i$ if $D_i$ does not have a sticky same-end parent and $\zeta_i(Y_p)= c \cdot \zeta'_i$ otherwise.
	
	We need to show that $\tzeta_i(Y_p)$, a function evaluated on $Y_p$, is equal to $\tzeta_i$, which is either a coordinate of $p$ or equal to 1. We will show this for $i=k, k-1, \dots, 1$.
	
	 For $i=k$, since $D_k$ is a top chord, for any $Y \in \gto{D}$
	\[\zeta_k(Y)= \szeta_k(Y) \quad \text{so} \quad \tzeta_k(Y)=\zeta_k(Y)/ \gamma_k(Y).\] 
	Setting $Y=Y_p$, we obtain $\tzeta_k(Y_p)= \zeta'_k/\gamma'_k$. In this case, according to the definition of $F$, we have $\tzeta_k=\zeta'_k$. In particular, $\gamma'_k=1$. So we have $\tzeta_k(Y_p)= \zeta'_k=\tzeta_k$.
	
	Assume $\tzeta_\ell=\tzeta_\ell(Y_p)$ for $\ell>i$. 
	
	\noindent \textbf{Case 1:} Suppose that $D_i$ has a sticky same-end parent. For any $Y \in \gto{D}$, we have that $N_{\beta_i}(\tzeta_j(Y))= m(\beta_i(Y)) \beta_i(Y)$ and the only tile variables appearing in the Laurent monomial on the left hand side are for chords $D_\ell$ with $\ell > i$. So, for $Y=Y_p$, we have $m(\beta_i(Y_P)) \cdot c \beta'_i= N_{\beta_i}(\tzeta_j)=\beta'_i$, implying that $m(\beta_i(Y_P))= c^{-1}$. For $\zeta_i \neq \beta_i$, we have $$N_{\zeta_i}(\tzeta_j(Y_P))= m(\beta_i(Y_P)) \zeta_i(Y_P)= \zeta'_i= N_{\zeta_i}(\tzeta_j).$$
	In the second equality, we use property (2) of the map $m$.
	Since $N_{\zeta_i}(\tzeta_j(Y))$ is $\tzeta_i(Y)$ times tile variables for $\ell>i$ and $\tzeta_\ell=\tzeta_\ell(Y_p)$ for $\ell>i$, the above string of equalities implies that $\tzeta_i(Y_P)$ is equal to $\tzeta_j$.
	
	\noindent \textbf{Case 2:} Suppose $D_i$ does not have a sticky same-end parent. Then $\tgamma_i(Y)=1=\tgamma_i$, since $\tgamma_i(Y)\in \Gamma$. This means that $\gamma_i'=N_{\gamma_i}(\tzeta_j(Y_p))$. On the other hand, $N_{\gamma_i}(\tzeta_j(Y_p))=m(\gamma_i(Y_p)) \gamma_i(Y_p)= m(\gamma_i(Y_p)) c$, so $c=\gamma_i'/m(\gamma_i(Y_p))$. For $\zeta_i \neq \gamma_i$, we have 
	$$N_{\zeta_i}(\tzeta_j(Y_p))= m(\gamma_i(Y_p)) \zeta_i(Y_p)= c\cdot  m(\gamma_i(Y_p)) \zeta'_i/\gamma_i'= \zeta_i' = N_{\zeta_i}(\tzeta_j).$$ 
	Again, in the second equality, we use property (2) of the map $m$. By a similar argument as in the first case, this shows that $\tzeta_i(Y_p)=\tzeta_i$.
	
	This shows that $Y_p$ is a preimage of $p$ in $\gto{D}$. For uniqueness, note that the tile variables determine the coordinate functionaries up to a scalar for each $i$. So another preimage $Y'$ would have coordinate functionaries $\czeta_i(Y')$ which can only differ from $\czeta_i(Y_p)$ by a scalar $c_i$. However, this implies that the twistor matrix $\mtx_D(Y')$ has the same rowspan as the twistor matrix $\mtx_D(Y_p)$, and thus $Y'= \tZ(\mtx_D(Y'))$ is equal to $Y_p= \tZ(\mtx_D(Y_p))$.

\end{proof}

One may upgrade \cref{prop:tile-biregular-pos-reals} to a statement about the cluster variety $\Vcal_D$ corresponding to the tile seed $\check{\Sigma}_D$ as follows.

\begin{theorem}\label{prop:birat-to-torus}
	Let $f : \Gr_{k, k+4} \dashrightarrow \Vcal_D$ be the map $Y \mapsto (\tzeta_i(Y))$ sending a point to its list of tile variables. Then $f$ is a birational map which maps $\gto{D}$ onto the positive part of $\Vcal_D$.
\end{theorem}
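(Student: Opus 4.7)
The plan is to upgrade the pointwise bijection of \cref{prop:tile-biregular-pos-reals} to a birational equivalence by exhibiting a rational inverse, and then to identify the image of $\gto{D}$ with the cluster-theoretic positive part $\Vcal_D^{>0}$ via Laurent positivity. The strategy divides naturally into three steps: rationality of $f$, construction of a rational inverse, and the cluster-positivity argument.

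First, I would note that rationality of $f$ is immediate. By \cref{def:tilevars} combined with \cref{prop:coord-fcnaries-in-domino-var}, every tile variable $\tzeta_i(Y)$ is a (signed) Laurent monomial in the twistor coordinates of $Y$, so $f$ is rational on the complement of the denominator loci. For the inverse, I would follow the recipe used inside the proof of \cref{prop:tile-biregular-pos-reals}: given $p = (\tzeta_i) \in \Vcal_D$ on a dense open subset, I recover the coordinate functionaries $\zeta_i'(p)$ as explicit Laurent monomials in the $\tzeta_i$ via \cref{prop:coord-fcnaries-in-domino-var}, form the BCFW twistor matrix $M_p = \mtx_D(F(p))$ whose entries are rational in the $\tzeta_i$, and set $g(p) := \tZ(M_p)$. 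Since $\tZ$ is polynomial in its matrix argument, $g$ is a rational map, and the computation already performed in the proof of \cref{prop:tile-biregular-pos-reals} shows $\tzeta_i(g(p)) = \tzeta_i$ for all $i$; hence $f \circ g = \mathrm{id}$ generically. Together with the generic injectivity of $f$, this identifies $g$ with $f^{-1}$ and establishes birationality.

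To identify the image of $\gto{D}$ with $\Vcal_D^{>0}$, I would argue as follows. By \cref{prop:tile-biregular-pos-reals} the map $f$ restricts to a bijection from $\gto{D}$ onto the positive orthant $\R_+^{\hhxx(D)}$ in the initial cluster of $\check{\Sigma}_D$; it remains to show this orthant equals $\Vcal_D^{>0}$, i.e.\ the locus where every cluster variable in every seed of $\A(\check{\Sigma}_D)$ is positive. The inclusion $\Vcal_D^{>0} \subseteq \R_+^{\hhxx(D)}$ is trivial because $\hhxx(D)$ is itself a cluster, so I would focus on the reverse inclusion: by the Laurent phenomenon together with positivity of cluster variables, every cluster variable in any seed mutation-equivalent to $\check{\Sigma}_D$ is a subtraction-free Laurent polynomial in $\hhxx(D)$, so positivity of the initial cluster propagates to all seeds.

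The hard part will be verifying that the signs introduced in passing from $\Irr(D)$ to $\hxx(D)$ and then to $\hhxx(D)$ mesh correctly with the cluster structure: one needs $\check{\Sigma}_D$ to be genuinely a seed of a cluster algebra with subtraction-free Laurent cluster variables in any fixed cluster, and the quasi-homomorphism to the $\Gr_{4,n}$ subseed $\Sigma_D$ of \cref{thm:quiver} must intertwine positivity on $\gto{D}$ with cluster positivity in $\A(\check{\Sigma}_D)$. Once this is settled, combining \cite[Proposition 8.10]{even2023cluster} with \cref{lem:good-scaling} confirms that all tile variables in every cluster of $\check{\Sigma}_D$ are positive on $\gto{D}$, and the theorem follows.
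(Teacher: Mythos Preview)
Your approach is the paper's: show $f$ is rational, build a rational inverse $g(p)=\tZ(M_p)$ via the BCFW matrix, and invoke \cref{prop:tile-biregular-pos-reals} for the positive part. There is one real gap. You write that ``the computation already performed in the proof of \cref{prop:tile-biregular-pos-reals} shows $\tzeta_i(g(p))=\tzeta_i$,'' but that computation was done only for $p\in\R_+^{\hhxx(D)}$; to pass to a generic complex point you must know (i) that $M_p$ is still full rank when the edge weights are arbitrary nonzero complex numbers, and (ii) that the twistor-matrix inversion from \cite[Proposition~11.15]{even2023cluster} continues to hold for any $Y$ with nonvanishing coordinate functionaries, not only $Y\in\gto{D}$. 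The paper addresses both points explicitly (citing \cite{MullerSpeyerTwist} for (i)) and restricts the inverse to the open set $T'\subset T_{\check\Sigma_D}$ where the coordinate functionaries of $Y_p$ are well-defined and nonzero. Without these remarks your inverse is only defined on the positive orthant, which does not by itself give birationality.

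Your third step and your ``hard part'' are unnecessary. The paper takes the positive part of $\Vcal_D$ to be the positive orthant $\R_+^{\hhxx(D)}$ in the initial cluster torus, so \cref{prop:tile-biregular-pos-reals} already gives $f(\gto{D})=\Vcal_D^{>0}$ directly; no Laurent-positivity argument or sign-compatibility check between $\Sigma_D$ and $\check\Sigma_D$ is required for the theorem as stated.
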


\begin{proof}
	Let $T_D \subset \Gr_{k, k+4}$ be the subset where all tile variables are well-defined and nonvanishing. Note that $T_D$ is open and nonempty, as it contains $\gto{D}$. The map $f$ is well-defined on $T_D$, and the tile coordinates are rational functions in the Pl\"ucker coordinates of $Y$, so $f$ is rational. Note that $f(T_D)$ is contained in the cluster torus $T_{\check{\Sigma}_D}=(\C^*)^{\hhxx(D)} \subset \Vcal_D$.
	
	In the proof of \cref{prop:tile-biregular-pos-reals}, we constructed an inverse to $f$ on the positive part $\R_+^{\hhxx(D)}$ of $\Vcal_D$. This inverse extends to an open subset of the cluster torus $T_{\check{\Sigma}_D}$. Indeed, for $p \in T_{\check{\Sigma}_D}$, define $M_p$ and $Y_p$ as in the proof of \cref{prop:tile-biregular-pos-reals}. The matrix $M_p$ is full-rank by e.g. \cite{MullerSpeyerTwist}, as it is the path matrix of a plabic graph with nonzero complex edge weights. However, $Y_p$ may or may not be full rank. Let $T' \subset T_{\check{\Sigma}_D}$ be the subset of points $p$ such that the coordinate functionaries of $Y_p$ are well-defined and non-vanishing. The coordinate functionaries of $Y_p$ are rational functions in the coordinates of $p$; if they are all well-defined and non-vanishing, then in particular $Y_p$ has at least one nonvanishing twistor coordinate, and so is full rank. The tile variables can be expressed as Laurent monomials in the signed domino variables, and so also as Laurent monomials in coordinate functionaries. Thus, if the coordinate functionaries of $Y_p$ are non-vanishing, so are the tile variables. This implies for $p \in T'$,  $Y_p \in T_D$. Note that $T'$ contains the positive part of $\Vcal_D$, and so is open in $\Vcal_D$.
	
	We claim that $p \mapsto Y_p$ is the inverse of $f$ on $T'$. The argument is very similar to the proof of \cref{prop:tile-biregular-pos-reals}. We outline the additional arguments needed. First, allowing the BCFW coordinates to vary over $(\Gr_{1,5})^k$ rather than $(\chR)^{k}$, the BCFW matrices will parametrize a torus containing $S_D$ \cite{MullerSpeyerTwist}. Second, for any point $Y \in \Gr_{k, k+4}$ which has all non-vanishing coordinate functionaries, the proof of \cite[Proposition 11.15]{even2023cluster}
 shows that the unique pre-image of $Y$ in this torus is given by the twistor matrix $\mtx_D(Y)$. That is, the BCFW coordinates of this unique pre-image are exactly the coordinate functionaries of $Y$. With these facts in hand, the proof of \cref{prop:tile-biregular-pos-reals} goes through identically for $p \in T'$. 	As the Pl\"ucker coordinates of $Y_p$ are rational functions in the coordinates of $p$, $p \mapsto Y_p$ is rational.

	Finally, \cref{prop:tile-biregular-pos-reals} shows that $f$ maps $\gto{D}$ onto the positive part of $\Vcal_D$.

\end{proof}

It would be interesting to upgrade \cref{prop:birat-to-torus} to a biregular map $T_D \to T_{\check{\Sigma}_D}$, or to an embedding $\Vcal_D \hookrightarrow \Gr_{k, k+4}$.

For each cluster in the tile cluster algebra $\A(\check{\Sigma}_D)$, \cref{prop:birat-to-torus} gives 
a way to describe $\gto{D}$ as a semi-algebraic set, this time using dimension-many inequalities:  
\begin{corollary}[Positivity test]\label{cor:tilecoordinates}
	We have
 \begin{align*}
     \gto{D}
     &= \{Y \in \Gr_{k, k+4}: x(Y)>0 \text{ for all } x \text{ in any fixed cluster of } \A(\check{\Sigma}_D)\}
 \end{align*}
In particular, $Y \in \Gr_{k, k+4}$ is in $\gto{D}$ if and only if all tile variables are positive on $Y$.
\end{corollary}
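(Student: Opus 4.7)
The plan is to deduce \cref{cor:tilecoordinates} directly from \cref{prop:birat-to-torus}, combined with the standard description of the totally positive part of a cluster variety as the locus on which every variable of some (equivalently, every) cluster is strictly positive. Since \cref{prop:birat-to-torus} already gives a birational map $f:\Gr_{k,k+4}\dashrightarrow\Vcal_D$ restricting to a bijection from $\gto{D}$ onto the positive part of $\Vcal_D$, the corollary will follow by transporting the cluster-theoretic positivity criterion on $\Vcal_D$ back along $f$.

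For the forward containment, I would take $Y\in\gto{D}$, so that $f(Y)$ lies in the positive part of $\Vcal_D$. Every cluster variable of $\A(\check{\Sigma}_D)$ is, by construction, the pullback under $f$ of the corresponding cluster variable on $\Vcal_D$, and the latter is strictly positive on the positive part. Hence $x(Y)>0$ for every $x$ in any fixed cluster. For the reverse containment, I would fix a cluster $\mathbf{z}$ of $\A(\check{\Sigma}_D)$ and assume that $x(Y)>0$ for every $x\in\mathbf{z}$. The variables in $\mathbf{z}$ are then well-defined and nonvanishing at $Y$, which both places $Y$ in the domain of $f$ and puts $f(Y)$ in the cluster torus associated to $\mathbf{z}$ with all $\mathbf{z}$-coordinates positive. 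By the Laurent phenomenon and the positivity theorem for cluster algebras, every cluster variable of $\A(\check{\Sigma}_D)$ expands as a subtraction-free Laurent polynomial in $\mathbf{z}$, hence evaluates positively at $f(Y)$. Thus $f(Y)$ lies in the positive part of $\Vcal_D$, and a final appeal to \cref{prop:birat-to-torus} gives $Y\in\gto{D}$.

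The only delicate point in this outline is ensuring that the hypothesis ``$x(Y)>0$ for all $x\in\mathbf{z}$'' actually places $Y$ in the subset of $\Gr_{k,k+4}$ where $f$ is regular; this is automatic, because the tile variables in $\mathbf{z}$ being nonvanishing at $Y$ is exactly what is needed to evaluate $f$ via the cluster chart associated to $\mathbf{z}$. Beyond this verification the argument is purely a translation between the geometric bijection provided by \cref{prop:birat-to-torus} and the combinatorial Laurent positivity of cluster algebras, so I expect no genuine technical obstacle.
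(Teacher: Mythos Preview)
Your proposal is correct and follows essentially the same approach as the paper's proof. The paper's argument is extremely terse: for the forward containment it simply notes that all cluster variables of $\A(\check{\Sigma}_D)$ are positive on $\gto{D}$ by construction, and for the reverse it says that if $Y$ is in the right-hand side then $f(Y)$ lies in the positive part of $\Vcal_D$, whence $Y\in\gto{D}$ by the bijection of \cref{prop:birat-to-torus}. You unpack the one step the paper leaves implicit, namely the appeal to Laurent positivity to pass from positivity of an arbitrary fixed cluster $\mathbf{z}$ to positivity of the initial tile variables (and hence to $f(Y)$ lying in the positive part), which is exactly what is needed and is the standard cluster-theoretic fact underlying the paper's sentence.
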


\begin{proof}
 All cluster variables in $\A(\check{\Sigma}_D)$ are positive on $\gto{D}$ by construction, so it suffices to show the right hand side is contained in the left-hand side. If $Y$ is in the right-hand side, then $f(Y)$ is in the positive part of $\Vcal_D$. The inverse of $f$ maps the positive part to $\gto{D}$, so $Y \in \gto{D}$.
\end{proof}

\section{Canonical forms of BCFW tiles from cluster algebra}\label{sec:canonical}
 In this section we use the cluster structure for BCFW tiles to compute the canonical form of such tiles purely in terms of cluster
variables for $\mbox{Gr}_{4,n}$.
\subsection{Background on Positive Geometry}
\begin{definition}[\cite{ABL}]
Let $X$ be a $d$-dimensional 
complex irreducible algebraic variety which is defined over $\mathbb{R}$, and let $X^{\geq 0}$ be a closed\footnote{We always use the Euclidean topology, unless specified otherwise (e.g. in the case of Zariski topology).} semialgebraic subset of $X(\mathbb{R})$, whose interior $X^{> 0}$ is a $d$-dimensional oriented real manifold.
Let $C_1 \dots C_r$ be the irreducible components of the Zariski-closure of the boundary  $X^{\geq 0} \setminus X^{>0}$, and for $1\leq i \leq r$ let $C_i^{\geq 0}$ denote the closure of the interior of 
$C_i \cap X^{\geq 0}$.  We say that 
$(X,X^{\geq 0})$
is a \emph{positive geometry} of dimension $d$ if there exists a unique nonzero rational $d$-form 
$\Omega(X,X^{\geq 0})$ called the \emph{canonical form}, satisfying the recursive axioms:
\begin{itemize}
\item If $d=0$, then $X=X_{\geq 0} = \mbox{pt}$ is a point, and we define $\Omega=\pm 1$ depending on the orientation.
 \item If $d>0$, then we require that 
 $\Omega(X, X_{\geq 0})$ has poles only along the boundary components $C_i$, these poles are simple, and for 
 each $1\leq i \leq r$, we have 
 that $(C_i,C_i^{\geq 0})$ is a positive geometry of dimension $d-1$, called a \emph{facet} of $(X,X^{\geq 0})$, and 
    \begin{equation*}
        \mbox{Res}_{C_i} \Omega(X,X^{\geq 0})=\Omega(C_i,C_i^{\geq 0}).
    \end{equation*} \vspace{-2em}
\end{itemize}
\end{definition}

\begin{example}[$d=1$]  $(\mathbb{P}^1, [a,b])$, with the canonical form $\Omega=\frac{b-a}{(x-a)(b-x)} \mbox{d}x$ is a positive geometry (closed interval). Its facets are: $(\{a\},\{a\}), (\{b\},\{b\})$ and $\mbox{Res}_{a} \Omega=1, \mbox{Res}_{b} \Omega=-1$.
\end{example}

\begin{example}[$d=2$] $(\mathbb{P}^2, \square_{1234})$, where $\square_{1234}$ is a quadrilateral with vertices $v_1=(0, 0); v_2=(2,0); v_3=(1, 2), v_4=(0, 1)$, see \cref{fig:pos_geo}. The canonical form is: 
\begin{equation}\label{eq:canform_square}
   \Omega(\mathbb{P}^2, \square_{1234})=\frac{y-4x -4}{xy(y-x-1)(2x+y-4)} \mbox{d}x \wedge \mbox{d}y.   
    \end{equation}

The facets are: $(\mathbb{P}^1, [v_1,v_2])$, $(\mathbb{P}^1, [v_2,v_3])$, $(\mathbb{P}^1, [v_3,v_4])$, $(\mathbb{P}^1, [v_4,v_1])$.
   \begin{equation*}
 \mbox{Res}_{[v_1,v_2]} \Omega(\mathbb{P}^2, \square_{1234})=\frac{2}{x(2-x)} \mbox{d}x= \Omega(\mathbb{P}^1, [v_1,v_2]).
   \end{equation*}
 $(\mathbb{P}^2, \mbox{half disk})$ with $\Omega=\frac{1}{y(x^2+y^2-1)} \mbox{d}x \wedge \mbox{d}y$ is a positive geometry.
A closed disk is \emph{not} a positive geometry.
For more positive geometries in $d=2$ see the work on \emph{planar polypols} \cite{Kohn}.
\end{example}

\begin{figure}[ht]
    \centering
\includegraphics[width=0.8\textwidth]{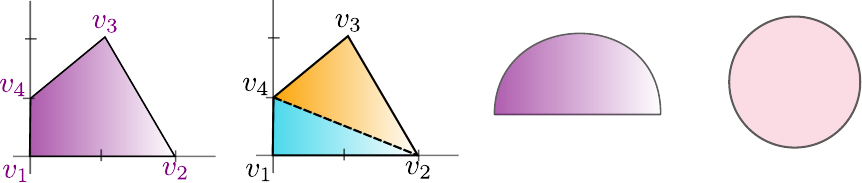}
    \caption{From left to right: the quadrilateral $\square_{1234}$; the tiling of $\square_{1234}$ into the triangles $\Delta_{124}, \Delta_{234}$; half disk; closed disk.}
    \label{fig:pos_geo}
\end{figure}

\begin{definition}
 Let $(X,X^{\geq 0})$ be a positive geometry. A collection $\{(X_i,X_i^{\geq 0})\}_{i \in \CCC}$ of positive geometries is a \emph{tiling} of $(X,X^{\geq 0})$ if:
\begin{itemize}
    \item the interiors $X_i^{>0}$ are pairwise disjoint;
   \item the union $\cup_i X_i^{\geq 0}$ equals $X^{\geq 0}$;
   \item the orientation of each $X_i^{>0}$ agrees with $X^{>0}$.
   \end{itemize}
\end{definition}

\begin{heuristic}\cite{ABL} \label{heuristic:tilings_pos}
Let $(X,X^{\geq 0})$ be a positive geometry and the collection $\{(X_i,X_i^{\geq 0})\}_{i \in \CCC}$ be a tiling of $(X,X^{\geq 0})$. Then
\begin{equation}
  \Omega(X,X^{\geq 0})=\sum_{i \in \CCC} \Omega(X_i,X_i^{\geq 0}).  
\end{equation}
\end{heuristic}

\begin{example}
    $(\mathbb{P}^2, \square_{1234})$ can be tiled by the two triangles $(\mathbb{P}^2, \Delta_{124})$ and $(\mathbb{P}^2, \Delta_{234})$ with vertices $(v_1,v_2,v_4)$ and $(v_2,v_3,v_4)$ respectively, see \cref{fig:pos_geo}. Their canonical forms are:
    \begin{equation*}
    \Omega(\Delta_{124})=\frac{2}{x y (2-x-2y)} \mbox{d} x  \wedge \mbox{d} y, \quad \Omega(\Delta_{234})=\frac{9}{(1+x-y) (4-2x-y) (2-x-2y)} \mbox{d} x  \wedge \mbox{d} y.
\end{equation*}
    Then $\Omega(\square_{1234})=\Omega(\Delta_{124})+\Omega(\Delta_{234})$, cf. \cref{eq:canform_square}. Moreover, the (`spurious') pole along the facet $(24)$ cut out by $2-x-2y=0$ cancels in the sum. Indeed, $(24)$ is not a facet of $\square_{1234}$.

\end{example}

\begin{theorem}\cite{ABL, KohnRanestad}
    Let $\mathcal{P}$ be a projective pointed polyhedral cone (or projective polytope) in $\mathbb{P}^m$. Then  $(\mathbb{P}^m, \mathcal{P})$ is a positive geometry. Moreover,
\begin{equation*}
\Omega(\mathbb{P}^m, \mathcal{P})=\frac{N(x)}{D(x)} \mbox{d}^m x,    
\end{equation*}
  where $D(x)$ is the product of linear forms defining facets of $\mathcal{P}$, and $N(x)$ is the \emph{adjoint} of $\mathcal{P}$.
\end{theorem}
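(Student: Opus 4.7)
My plan is to proceed by induction on the dimension $m$. The base case $m=0$ is immediate: a projective pointed cone reduces to an oriented point, and $\Omega = \pm 1$ vacuously satisfies the axioms. For the inductive step, I would separately establish uniqueness of the canonical form and then exhibit one of the claimed shape.

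Uniqueness is the easier half. Suppose $\Omega_1$ and $\Omega_2$ both satisfy the axioms for $(\mathbb{P}^m, \mathcal{P})$. Each has at worst simple poles along the facet hyperplanes $\{D_i = 0\}$, and by the inductive hypothesis their residues along each facet $F_i$ both equal the unique canonical form $\Omega(\mathbb{P}^{m-1}, F_i)$. Hence $\Omega_1 - \Omega_2$ has simple poles with vanishing residue along every potential boundary component, so it extends to a regular rational $m$-form on all of $\mathbb{P}^m$; since $H^0(\mathbb{P}^m, \Omega^m_{\mathbb{P}^m}) = 0$, this forces $\Omega_1 = \Omega_2$. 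It therefore suffices to produce a single form $\Omega$ satisfying the axioms and verify it has the claimed structure.

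For existence, I would triangulate $\mathcal{P}$ into simplices $\Delta_1, \dots, \Delta_N$. Each projective simplex has an explicitly known canonical form: a constant numerator divided by the product of its $m+1$ defining linear forms. By \cref{heuristic:tilings_pos}, the candidate $\Omega := \sum_i \Omega(\mathbb{P}^m, \Delta_i)$ is the desired form, provided that the spurious poles along internal shared facets cancel (the two simplices adjacent to an internal facet induce opposite orientations on it, so the Poincar\'e residues cancel) and that the remaining residues on each $F_i$ reproduce $\Omega(\mathbb{P}^{m-1}, F_i)$ (apply induction to the triangulation of $F_i$ induced by the global triangulation). Combining the terms over the common denominator $D(x) = \prod_i D_i(x)$ expresses the result as $\Omega = N(x)/D(x)\, \mathrm{d}^m x$ with $\deg N = \deg D - m - 1$, which is precisely the degree range in which the adjoint polynomial of $\mathcal{P}$ lives.

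The main obstacle I anticipate is the final identification $N = \mathrm{adj}(\mathcal{P})$. By uniqueness, $N$ is completely determined by its residue behavior on each facet hyperplane, so the task is to match those residues to the characterization of the adjoint (as the unique polynomial of the predicted degree whose restriction to each $\{D_i = 0\}$, after stripping the other $D_j$, equals the adjoint of the corresponding facet). This should proceed by another induction: having verified the formula on each facet, the compatibility of the residues at codimension-two intersections of facet hyperplanes inside $\mathcal{P}$ forces the solution to be a single polynomial of the predicted degree. A clean way to avoid heavy triangulation bookkeeping is to cone $\mathcal{P}$ from one interior vertex, so that all simplices share that vertex and the adjoint emerges directly from summing simplicial canonical forms over the common denominator $D(x)$.
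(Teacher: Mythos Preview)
The paper does not contain a proof of this theorem. It is stated with citations to \cite{ABL, KohnRanestad} and is used as background from the literature; no argument is given in the paper itself. There is therefore nothing in the paper's text to compare your proposal against.

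As a standalone sketch your approach is the standard one and is essentially what appears in the cited sources: induction on dimension, uniqueness via $H^0(\mathbb{P}^m,\Omega^m_{\mathbb{P}^m})=0$, and existence via triangulation into simplices with explicit canonical forms. One caution: you invoke \cref{heuristic:tilings_pos} for the triangulation step, but the paper labels that statement a \emph{Heuristic}, not a theorem, so in a self-contained proof you would need to verify directly that the spurious poles along internal facets cancel (which you do outline) rather than appeal to it as an established fact. The identification of the numerator with the adjoint polynomial is indeed the more delicate point; in \cite{KohnRanestad} this is handled by Warren's uniqueness theorem for the adjoint, which characterizes it as the unique polynomial of the correct degree vanishing on the residual arrangement, and your residue-matching argument is heading in that direction.
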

The \emph{adjoint} is a polynomial that cancels the `unwanted' poles outside the polyope, i.e. it cuts out the hypersurface which passes through the \emph{residual hyperplane arrangement} of $\mathcal{P}$.

\begin{theorem}\cite{postnikov, KLS, Lam:2022yly}
\label{th:GrposGeo}
    $(\mbox{Gr}_{k,n}(\mathbb{C}), \mbox{Gr}^{\geq 0}_{k,n})$ is a positive geometry with canonical form:
\begin{equation*}
    \Omega(\Gr_{k,n}(\mathbb{C}), \Gr^{\geq 0}_{k,n})= \frac{\mbox{d}^{k (n-k)} C}{\lr{1, \ldots ,k} \lr{2, \ldots ,k+1}\ldots \lr{n, 1, \ldots ,k-1}},
\end{equation*}
where $\lr{I}$ denotes the Pl\"ucker coordinate of a point $C \in \mbox{Gr}^{\geq 0}_{k,n}$. Moreover, the faces $(\Pi_S(\mathbb{C}),\bar{S})$ are positive geometries, where $S \subset \mbox{Gr}^{\geq 0}_{k,n}$ is a positroid cell and $\Pi_S(\mathbb{C})$ is its Zariski closure in $\mbox{Gr}_{k,n}(\mathbb{C})$, called the \emph{positroid variety} of $S$.
\end{theorem}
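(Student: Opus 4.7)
The plan is to induct on the dimension $d = k(n-k)$ of the Grassmannian, with the trivial positive geometry on a point as the base case. For the inductive step, I would organize the argument around three claims: (i) the stated rational top form $\Omega$ is well-defined and meromorphic on $\Gr_{k,n}(\CC)$, with polar divisor contained in $\bigcup_{i=1}^n \{\lr{i,i+1,\dots,i+k-1}=0\}$ and only simple poles there; (ii) $\Omega$ is strictly positive on $\Gr_{k,n}^{>0}$ in the standard orientation; and (iii) every codimension-one irreducible component of the boundary of $\Gr_{k,n}^{\geq 0}$ is cut out by one of the $n$ cyclic Pl\"ucker divisors, and the residue of $\Omega$ there matches the canonical form of the corresponding codimension-one positroid variety.

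For (ii), I would use Postnikov's parametrization \cite{postnikov} of $\Gr_{k,n}^{>0}$ by the face weights of a reduced plabic graph, which gives a diffeomorphism from $\mathbb{R}^d_{>0}$; in these coordinates both the Pl\"ucker coordinates and $\mbox{d}^{k(n-k)}C$ admit explicit subtraction-free expressions, and a direct computation identifies $\Omega$ with a positive multiple of the standard torus form $\prod \mbox{d}t_i / t_i$. For (iii), I would combine the positroid cell decomposition \cite{postnikov} with its algebraic-geometric counterpart \cite{KLS}: a sanity check in small cases such as $\Gr_{2,4}^{\geq 0}$ (where the three-term Pl\"ucker relation forces the non-cyclic minor $\lr{13}$ to stay positive on every codimension-one boundary) indicates that the codimension-one boundary cells of $\Gr_{k,n}^{\geq 0}$ are exactly the $n$ strata where a single cyclic minor vanishes, and their Zariski closures in $\Gr_{k,n}$ are the predicted divisors. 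To compute the residue along $\{\lr{i,i+1,\dots,i+k-1}=0\}$, I would degenerate the face weight of the corresponding bridge in the Postnikov parametrization to zero; the resulting graph is the reduced plabic graph of the facet, and the induction hypothesis identifies its associated canonical form with the required residue.

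The main obstacle will be step (iii), and in particular checking that the numerator $\mbox{d}^{k(n-k)}C$ neither kills a candidate pole nor introduces an extra one. This requires interpreting $\mbox{d}^{k(n-k)}C$ correctly as the descent of the top $\mathrm{SL}_k$-invariant form on the affine cone, and carefully tracking cancellations under the bridge and square moves of \cite{postnikov}. For the general positroid statement \cite{Lam:2022yly}, the same strategy applies to each positroid variety $\Pi_S$, with the cyclic-minor denominator replaced by the product of the frozen minors coming from the Grassmann necklace of $S$; here the normality and Cohen--Macaulay properties of $\Pi_S$ from \cite{KLS} are essential to guarantee that residues along boundary divisors behave as predicted, and the key structural input is that codimension-one positroid cells inside $\bar{S}$ are again positroid cells of the same type, permitting the induction to close.
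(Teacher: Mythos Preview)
The paper does not prove this theorem: it is stated purely as a citation of results from \cite{postnikov, KLS, Lam:2022yly}, with no argument given in the text. There is therefore no ``paper's own proof'' to compare your proposal against.

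Your outline is a reasonable sketch of how the cited references establish the result, and the overall strategy (induction on dimension, identification of codimension-one boundary strata with the $n$ cyclic Pl\"ucker divisors, residue computation via degenerating a bridge parameter in a plabic-graph parametrization) is the standard one. A few comments on the proposal itself: the interpretation of $\mbox{d}^{k(n-k)}C$ as the descent of the top $\mathrm{SL}_k$-invariant form from the affine cone is the right one, and in Postnikov's face-weight or edge-weight coordinates the form really does become $\prod \mbox{d}\log t_i$ on the nose, so the ``positive multiple'' you mention is in fact $1$ once normalizations are fixed. For the general positroid statement, the induction you describe is essentially the content of Lam's argument in \cite{Lam:2022yly}; the facet structure of $\bar{S}$ inside $\Pi_S$ is handled there via the covering relations in the closure order on positroid cells, and the normality input from \cite{KLS} is indeed what makes the residue well-defined along each boundary divisor.

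Since the paper treats this as background and offers no proof, your proposal cannot be said to agree or disagree with the paper's approach; it is simply a plausible reconstruction of the cited literature.
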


\subsection{The canonical form of the amplituhedron}

Both (cyclic) polytopes and the positive Grassmannian are positive geometries.  These objects can also be seen as special cases of amplituhedra (in particular, the  amplituhedra
$\mathcal{A}_{n,1,m}(Z)$ and $\mathcal{A}_{n,n-m,m}(Z)$, respectively). Since the amplituhedron
$\mathcal{A}_{n,k,m}(Z)$ is a subset of $\Gr_{k,k+m}$,
it is natural to conjecture the following.

\begin{conjecture}\cite{ABL}
    The amplituhedron $(\Gr_{k,k+m}(\mathbb{C}), \mathcal{A}_{n,k,m}(Z))$ is a positive geometry.
\end{conjecture}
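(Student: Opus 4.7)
\section*{Proof proposal for the conjecture}

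The plan is to leverage the tile-by-tile cluster structure developed in \cref{sec:pospart,sec:canonical} together with the existence of tilings of $\Ank$ by BCFW tiles (established in \cite{even2023cluster}). The overall strategy is: (i) show that each BCFW tile is itself a positive geometry; (ii) sum their canonical forms as in \cref{heuristic:tilings_pos}; (iii) verify that spurious poles cancel across shared facets; and (iv) confirm the recursive residue axiom by induction on dimension.

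First, I would establish that each standard BCFW tile $\gt{D}$ is a positive geometry. By \cref{prop:birat-to-torus}, the map $Y \mapsto (\tzeta_i(Y))$ is a birational map from $\Gr_{k,k+4}$ to the cluster variety $\Vcal_D$ sending $\gto{D}$ onto the positive part of $\Vcal_D$. Since a positive cluster variety is a disjoint union of tori glued by positive subtraction-free birational maps, its positive part is (up to closure) the image of $\R_+^{4k}$ under a toric chart, and its closure is a positive geometry with canonical form $\bigwedge_i d\log \tzeta_i$. Pulling back along $f$ produces a candidate canonical form $\Omega(\gt{D})$ on $\Gr_{k,k+4}$, expressible entirely in terms of cluster variables of $\Gr_{4,n}$, whose poles lie on the hypersurfaces cut out by the frozen cluster variables $\Froz(Z_D)$. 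The facet characterization in \cref{prop:standard_facets} then identifies these hypersurfaces with the actual geometric facets of $\gt{D}$, so the only poles of $\Omega(\gt{D})$ are along genuine facets. The same approach applies to general BCFW tiles via \cref{claim:facetsgenBCFW}, and the spurion tile is handled directly using its seed $\tSigma_{sp}$.

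Next, fix a tiling $\mathcal{T}=\{\gt{S}\}_{S\in \CCC}$ of $\Ank$ (for example a BCFW tiling) and set
\begin{equation*}
\Omega \;:=\; \sum_{S \in \CCC} \Omega(\gt{S}).
\end{equation*}
The central task is to prove that $\Omega$ has simple poles only along the boundary hypersurfaces of $\Ank$, i.e.\ that all \emph{spurious poles}, arising on facets internal to $\mathcal{T}$, cancel in the sum. By \cref{prop:standard_facets} and \cref{claim:facetsgenBCFW}, each internal facet $\gt{S'}$ is shared by exactly two tiles $\gt{S_1}$ and $\gt{S_2}$ in $\mathcal{T}$, and in both tiles it is cut out by the same functionary (up to sign) corresponding to a cluster variable of $\Gr_{4,n}$. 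The key calculation is then to show that the residues $\mathrm{Res}_{\gt{S'}} \Omega(\gt{S_1})$ and $\mathrm{Res}_{\gt{S'}} \Omega(\gt{S_2})$ are equal in magnitude and opposite in sign, so that they cancel. This should follow from the compatibility of the cluster seeds $\check{\Sigma}_{D_1}$ and $\check{\Sigma}_{D_2}$ across the shared facet, together with the fact that $\Omega(\gt{S_1})$ and $\Omega(\gt{S_2})$ induce opposite boundary orientations on $\gt{S'}$.

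Finally, to verify the recursive axiom I would induct on the dimension $km$. Each boundary facet of $\Ank$ either has the form $\{\llrr{Z_{i-1} Z_i Z_j Z_{j+1}} = 0\} \cap \Ank$ or lies on the zero locus of a more general functionary; in either case these are (closures of) lower-dimensional amplituhedra or their natural boundary strata, positive geometries by induction. The residue of $\Omega$ on such a facet coincides with the sum over those $\gt{S}\in \mathcal{T}$ which contain the facet, of the residues $\mathrm{Res}\,\Omega(\gt{S})$; the inductive hypothesis applied to the facet as a positive geometry then matches this sum with its canonical form. Uniqueness follows from the general uniqueness of canonical forms once existence, simplicity of poles, and correctness of residues are established. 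The \textbf{main obstacle} is step (iii): proving that the spurious-pole cancellation holds for \emph{every} internal facet of a tiling, across both BCFW and non-BCFW tiles (such as the spurion), and in a manner that does not depend on the choice of tiling. This is where the cluster-adjacency results of \cref{thm:clusteradjacency} and the explicit descriptions of tile seeds in \cref{sec:pospart} are expected to do the heavy lifting, by reducing the cancellation to an identity of residues of $d\log$-forms associated to a shared cluster variable.
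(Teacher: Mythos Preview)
The statement you are attempting to prove is labeled a \emph{Conjecture} in the paper (attributed to \cite{ABL}); the paper does not prove it and does not claim to. So there is no proof in the paper to compare your proposal against.

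Your outline is a reasonable sketch of how one might hope to attack the conjecture, but several of the steps you invoke are themselves open. Step (i), that each BCFW tile is a positive geometry, is stated in the paper as a separate conjecture immediately following this one; \cref{prop:birat-to-torus} gives a birational identification of $\gto{D}$ with the positive part of a cluster torus, but this does not by itself verify the recursive residue axioms down through all boundary strata of $\gt{D}$. Step (ii) relies on what the paper explicitly calls a \emph{Heuristic} (\cref{heuristic:tilings_pos}), not a theorem. In step (iii) you assert that every internal facet of a tiling is shared by exactly two tiles and that both tiles assign it the same facet functionary; neither claim is established in general for BCFW tilings in the paper. In step (iv), the assertion that boundary facets of $\Ank$ are themselves lower-dimensional amplituhedra (or otherwise known positive geometries) is not proved here and is part of what makes the conjecture hard. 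In short, your proposal correctly identifies the expected architecture of a proof, and correctly flags spurious-pole cancellation as the crux, but it does not close any of the gaps; it is a research program rather than a proof.
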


In order to find the canonical form of the amplituhedron, one method is to tile $\mathcal{A}_{n,k,m}(Z)$ and sum over the canonical forms of the tiles (cf. \cref{heuristic:tilings_pos}). 

\begin{definition}[Candidate canonical form of a tile]\label{def:cand_can_form} Let $Z_S$ be a tile of $\mathcal{A}_{n,k,m}(Z)$.  As the amplituhedron map $\tilde{Z}$ is injective on the open tile $\gto{S}$, we can define its inverse $\tilde{Z}^{-1}: \gto{S} \rightarrow S$. Then let us consider the pullback of the canonical form of the positroid cell under $\tilde{Z}^{-1}$:
\begin{equation}\label{eq:can_form_tile}
  \tilde{\Omega}(Z_{S})=( \tilde{Z}^{-1})^* \Omega(\Pi_S(\mathbb{C}),\bar{S}).
\end{equation}    
We call  $\tilde{\Omega}(Z_{S})$ the\footnote{we will always consider it up to a global sign, which is not relevant for our paper and  depends on the orientation.} \emph{candidate canonical form of the tile} $Z_S$.
\end{definition}
By \cref{th:GrposGeo}, $(\Pi_S(\mathbb{C}),\bar{S})$ is positive geometry and has a canonical form $\Omega(\Pi_S(\mathbb{C}),\bar{S})$. Moreover, by \cite[Section 6]{arkani-hamed_trnka} and \cite[Section 8.2]{Galashin_Lam_2020}, \cref{eq:can_form_tile} is well-defined.

Each positroid cell $S$ has a positive parameterization \cite{postnikov}, i.e. there is a diffeomorphism $h: S \to \mathbb{R}_+^{mk}$ which sends a matrix representative $C$ in $S$ to a collection of positive coordinates $(\alpha_1,\ldots,\alpha_{mk})$ in $\mathbb{R}_+^{mk}$. In this case, if we denote $\phi=h \circ \tilde{Z}^{-1}$, then
\begin{equation}\label{eq:canformpar}
\tilde{\Omega}(Z_{S})=\phi^*\bigwedge_{i=1}^{mk} \mbox{d log} (\alpha_i).
\end{equation}

\begin{conjecture}[Tiles are positive geometries] Let $Z_{S}$ be a tile of $\mathcal{A}_{n,k,m}(Z)$. Then $(\Gr_{k,k+m}(\mathbb{C}),Z_{S})$ is a positive geometry and its canonical form $\Omega(\Gr_{k,k+m}(\mathbb{C}),Z_{S})$ is the candidate canonical form $\tilde{\Omega}(Z_S)$ in \cref{def:cand_can_form}.
\end{conjecture}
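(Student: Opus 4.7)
The plan is to establish the conjecture first for standard BCFW tiles, where all the cluster-theoretic machinery of the paper is available, and then to discuss how to extend to general BCFW tiles and to tiles like the spurion. By \cref{thm:BCFW-tile-and-sign-description} the amplituhedron map is injective on $S_D$ with explicit inverse given by the twistor matrix $\twmt_\rcp(Y)$, and $S_D$ is positively parametrized by its BCFW coordinates. Using \cref{eq:canformpar}, this gives
\begin{equation*}
\tilde{\Omega}(Z_D) \;=\; \pm \bigwedge_{i=1}^{4k} d\log \alpha_i,
\end{equation*}
where the $\alpha_i$ are BCFW coordinates on $S_D$; pulling back through $\tZ^{-1}$ identifies each $\alpha_i$ with a coordinate functionary $\czeta_i^\rcp(Y)$, modulo the known homogeneity in $(\chR)^k$.

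Next, I would rewrite this form in the tile variables $\hhxx(D)$. By \cref{prop:coord-fcnaries-in-domino-var} each coordinate functionary is a signed Laurent monomial in the signed domino variables, and by \cref{lem:good-scaling} and \cref{def:tilevars} the rescaling from signed dominoes to tile variables is designed precisely to absorb this projective ambiguity. The outcome I expect is
\begin{equation*}
\tilde{\Omega}(Z_D) \;=\; \pm \bigwedge_{\tzeta_i \in \hhxx(D)} d\log \tzeta_i(Y),
\end{equation*}
that is, $\tilde{\Omega}(Z_D)$ is the pullback via the birational map of \cref{prop:birat-to-torus} of the standard logarithmic volume form on the cluster torus $T_{\check{\Sigma}_D}$. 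Its polar locus inside $\Gr_{k,k+4}$ is then contained in the union of the zero loci of the cluster and frozen variables of $\A(\check{\Sigma}_D)$. Combined with \cref{prop:standard_facets}, which matches the facets of $\gt{D}$ in bijection with $\AFacet(Z_D)$, and with \cref{lem:mutable_vars_give_codim_2}, which says mutable-variable vanishing loci have codimension $\geq 2$ inside $\gt{D}$, this will give that $\tilde{\Omega}(Z_D)$ has simple poles exactly on the facet hypersurfaces and no others.

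It then remains to verify the residue axiom: for each frozen $\rzeta \in \AFacet(Z_D)$, one must show that $\operatorname{Res}_{\rzeta=0}\tilde{\Omega}(Z_D)$ equals the candidate canonical form of the corresponding facet. I would argue this by induction on $k$, using the recursive construction of \cref{def:standardfromCD} and the explicit facet descriptions in \cref{thm:twistorvanish,thm:showingreduced}. Each facet is built by deleting one edge of the butterfly graph, and under product promotion (\cref{thm:promotion2}) the tile seed $\check{\Sigma}_D$ restricts, after setting $\rzeta=0$, to a tile seed for the facet, which itself is (up to zero-column insertions, cyclic shifts, and reflections) a BCFW-type tile of lower dimension to which the inductive hypothesis applies; the compatibility of $d\log$-wedges under the substitutions of \cref{def:product_promotion} then completes the induction. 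The hardest step, and the main obstacle, is handling the facets lying on the amplituhedron boundary, i.e.\ those where a boundary twistor $\llrr{i\,i{+}1\,j\,j{+}1}$ vanishes: there the amplituhedron map itself degenerates and the residue has to be computed along a degenerate BCFW step, which requires carefully tracking the special cases enumerated in \cref{def:frozmut} and \cref{rmk:8_2}. Finally, the same plan extends to general BCFW tiles using \cref{claim:facetsgenBCFW} in place of \cref{prop:standard_facets}, and to the spurion tile by replacing $\check{\Sigma}_D$ with the seed $\tilde{\Sigma}_{sp}$ of \cref{sec:spurion}; one then expects the conjecture for all known tiles to fall out of this cluster-theoretic framework.
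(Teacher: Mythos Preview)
The statement you are attempting to prove is labeled a \emph{Conjecture} in the paper and is not accompanied by any proof there; it is presented as an open problem. So there is no ``paper's own proof'' to compare against, and your proposal should be read as an attempt at an open conjecture rather than a rederivation of an existing argument.

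As a proof attempt, your proposal has genuine gaps. First, the conjecture is stated for an \emph{arbitrary} tile $Z_S$ of $\mathcal{A}_{n,k,m}(Z)$, for all $m$, whereas your argument is confined to $m=4$ and to the specific classes of tiles (standard BCFW, general BCFW, spurion) treated in the paper; nothing in the paper says these exhaust all tiles, so even a successful execution of your plan would not settle the conjecture as stated. Second, and more seriously, the inductive verification of the residue axiom is only outlined. You assert that each facet ``is (up to zero-column insertions, cyclic shifts, and reflections) a BCFW-type tile of lower dimension to which the inductive hypothesis applies,'' but the paper does not establish this: the facets of $\gt{D}$ constructed in \cref{thm:showingreduced} arise by deleting a butterfly edge and are shown to be images of codimension-$1$ positroid cells on which $\tZ$ is injective, yet they are not in general BCFW cells, nor is it shown that they carry a tile seed $\check{\Sigma}$ of their own. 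Without that, your induction has no hypothesis to invoke at the facet level. You yourself flag the boundary-twistor facets as ``the main obstacle,'' and indeed no argument is supplied there. Finally, even the claim that $\tilde{\Omega}(Z_D)$ has \emph{simple} poles precisely along the facet hypersurfaces, and no poles elsewhere in $\Gr_{k,k+4}(\mathbb{C})$, does not follow immediately from \cref{prop:standard_facets} and \cref{lem:mutable_vars_give_codim_2}: those results concern the real locus $\gt{D}$ and codimension inside the tile, not the global polar divisor of the rational form on the complex Grassmannian.

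The parts of your plan that \emph{are} carried out in the paper are the expression of $\tilde{\Omega}(Z_D)$ as $\bigwedge d\log$ of coordinate functionaries and of tile variables; these appear as the two Propositions at the end of \cref{sec:canonical}. They are ingredients toward the conjecture but do not by themselves establish the positive-geometry axioms.
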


\begin{conjecture}[Canonical form from tilings]
Let $\{Z_{S}\}_{S \in \mathcal{C}}$ be a tiling of $\mathcal{A}_{n,k,m}(Z)$. Then the canonical form of the amplituhedron $\mathcal{A}_{n,k,m}(Z)$ is obtained as
\begin{equation} \label{eq:canformampl}
 \Omega(\Gr_{k,k+m}(\mathbb{C}),\mathcal{A}_{n,k,m}(Z))=\sum_{S \in \mathcal{C}} \Omega(\Gr_{k,k+m}(\mathbb{C}),Z_{S}). 
\end{equation}    
In particular, the right hand side of \cref{eq:canformampl} is independent of the tiling.
\end{conjecture}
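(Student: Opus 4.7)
The plan is to deduce this conjecture from a residue-cancellation argument, granted the immediately preceding conjecture that each tile $(\Gr_{k,k+m}(\CC),\gt{S})$ is a positive geometry with canonical form $\tilde\Omega(Z_S)$ defined by \cref{eq:can_form_tile}. For standard BCFW tiles, one additionally has the explicit formula $\tilde\Omega(Z_D)=\bigwedge_{\tzeta_i\in \hhxx(D)} d\log \tzeta_i(Y)$ coming from \cref{prop:birat-to-torus} combined with \cref{eq:canformpar} applied to the tile variables; this makes both the simple-pole structure and the residues on facets cut out by frozen tile variables (identified by \cref{prop:standard_facets}) explicit. For general BCFW tiles a similar $d\log$ expression should follow from the coordinate functionary parametrization, and for the spurion tile directly from its parametrization in \cref{sec:spurion}.

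First I would classify the poles of the sum $\Omega_{\mathcal{C}} := \sum_{S\in \mathcal{C}}\tilde\Omega(Z_S)$. Each pole hypersurface of $\tilde\Omega(Z_S)$ is the Zariski closure of a facet of $\gt{S}$, and each such facet is either \emph{internal} (shared with another tile of $\mathcal{C}$ through a codimension-one stratum of $\AA$) or \emph{external} (contained in $\partial \AA$). A topological argument similar to the one used in the proof of \cref{lem:mutable_vars_give_codim_2} shows that every internal facet of some $\gt{S}\in\mathcal{C}$ must be matched by an internal facet of a unique other $\gt{S'}\in\mathcal{C}$. On the shared hypersurface the two residues are canonical forms of the same facet tile, but with opposite orientations since $\gto{S}$ and $\gto{S'}$ lie on opposite sides. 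The residues therefore cancel pairwise, so $\Omega_{\mathcal{C}}$ has simple poles only along external boundary components of $\AA$.

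The final step is to match residues along external boundaries with the recursively defined canonical form of $\AA$: each external hypersurface $C$ is itself tiled by the external facets it contains, and by induction on $k+m$ the sum of residues is $\Omega(C,C^{\geq 0})$. Uniqueness of canonical forms then yields \cref{eq:canformampl}, assuming $(\Gr_{k,k+m}(\CC),\AA)$ is a positive geometry. The tiling-independence claim can be proved without that assumption, by noting that any two tilings of $\AA$ should be connectable by a sequence of local \emph{flips} (the BCFW/spurion flip of \cref{sec:spurion} is prototypical); on the region common to the flipped subcollections, the same internal-facet cancellation argument applied to the boundary of that region forces the two partial sums to agree.

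The main obstacle is the open conjecture that $(\Gr_{k,k+m}(\CC), \AA)$ is itself a positive geometry, which is needed to identify $\Omega_{\mathcal{C}}$ with a pre-existing canonical form; without it one obtains only that $\Omega_{\mathcal{C}}$ has the expected pole/residue structure and is tiling-invariant. A secondary difficulty is that for non-BCFW tiles such as the spurion tile no analogue of \cref{prop:birat-to-torus} is yet available, so the $d\log$ formula and the residue computations must be done by hand using the ambient cluster structure and cluster adjacency in the spirit of \cref{thm:clusteradjacency,claim:facetsgenBCFW}. Finally, a subtle point requiring care is that facet hypersurfaces of different tiles in $\mathcal{C}$ might share irreducible components even when the corresponding facets are geometrically disjoint; controlling this globally, again via cluster variables, is essential for the residue cancellation to be rigorous.
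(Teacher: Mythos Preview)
The statement you are attempting to prove is labeled a \emph{Conjecture} in the paper, and the paper provides no proof of it; there is therefore no ``paper's own proof'' to compare against. Your proposal is best read as a heuristic outline of how such a result might eventually be established, and indeed you flag its conditional nature yourself.

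That said, several of the steps you present as routine are genuinely open. First, your claim that every internal facet of a tile $\gt{S}$ is ``matched by an internal facet of a unique other $\gt{S'}$'' is not a consequence of the paper's definition of tiling (\cref{def:tiling}), which only requires disjointness of open tiles and that closures cover the amplituhedron; it does \emph{not} require that codimension-one interfaces match pairwise. The paper distinguishes ``good'' tilings (where this holds) from general ones, and even for BCFW tilings this matching is a substantive result, not a tautology. Second, your tiling-independence argument via a sequence of local flips assumes that any two tilings of $\AA$ are connected by such moves; this is not established anywhere in the paper (the spurion flip is a single example, not a general theorem). Third, the inductive step on external boundaries presupposes that each boundary component of $\AA$ is itself a positive geometry tiled by the external facets, which is again part of what is conjectural.

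In short: your outline captures the folklore intuition (cf.\ \cref{heuristic:tilings_pos}, which the paper explicitly labels a \emph{Heuristic} rather than a theorem), but it does not constitute a proof, and the paper does not claim one either.
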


\begin{remark}
Clearly finding tilings of the amplituhedron and inverting the amplituhedron map on tiles are crucial step for computing the canonical form of the amplituhedron, and hence scattering amplitudes. 
In this paper and in \cite{even2023cluster} we inverted the amplituhedron map \cite[Theorem 7.7]{even2023cluster} on \emph{BCFW tiles} and proved the existence of a large family of tilings, the \emph{BCFW tilings} \cite[Theorem 12.3]{even2023cluster}. 
It then follows from \cite{Mason:2009qx,Arkani-Hamed:2009ljj,Arkani-Hamed:2012zlh,arkani-hamed_trnka} that tree-level scattering amplitudes in $\mathcal{N}=4$ SYM expressed via BCFW recursions are computed by the sum of the candidate canonical forms of the tiles in a BCFW tiling of $\mathcal{A}_{n,k,4}(Z)$.   
\end{remark}

\begin{proposition}[Canonical form of tiles from coordinate functionaries] Let $Z_\rcp$ be a BCFW tile and $\left([\calpha_i(Y): \cbeta_i(Y):\cgamma_i(Y): \cdelta_i(Y): \cepsilon_i(Y)]\right)_{i=1}^k$ its associated coordinate functionaries as in \cite[Definition 7.1]{even2023cluster}. Then the candidate canonical form $\tilde{\Omega}(Z_\rcp)$ of $Z_\rcp$ is given by:
\begin{equation} \label{eq:canform_coordsfunc}
  \tilde{\Omega}(Z_{S})=\bigwedge_{i=1}^k \mbox{dlog} \frac{\beta_i(Y)}{\alpha_i(Y)} \wedge \mbox{dlog} \frac{\gamma_i(Y)}{\alpha_i(Y)} \wedge \mbox{dlog} \frac{\delta_i(Y)}{\alpha_i(Y)} \wedge \mbox{dlog} \frac{\epsilon_i(Y)}{\alpha_i(Y)}.
\end{equation}
Analogously, for each $i \in [k]$, we could have chosen any other coordinate functionary $\zeta_i(Y)$ instead of $\alpha_i(Y)$ to divide the others by. 
\end{proposition}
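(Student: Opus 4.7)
The plan is to proceed by induction on the recipe $\rcp$ defining $S_\rcp$, leveraging the fact that each BCFW product step introduces a single $\Gr_{1,5}^{>0}$ factor in a natural positive parametrization of the cell, while the other steps (cyclic shift, reflection, zero-column insertion) do not add parameters. The key external input is the standard fact that the canonical form of $\Gr_{1,5}^{>0}$ (the positive part of a projective $4$-space, equivalently an open $4$-simplex) in homogeneous coordinates $[\alpha:\beta:\gamma:\delta:\epsilon]$ is
\[
\Omega(\Gr_{1,5}^{>0}) \;=\; \mbox{dlog}\tfrac{\beta}{\alpha}\wedge \mbox{dlog}\tfrac{\gamma}{\alpha}\wedge \mbox{dlog}\tfrac{\delta}{\alpha}\wedge \mbox{dlog}\tfrac{\epsilon}{\alpha},
\]
and that this form is symmetric (up to sign) in the five homogeneous coordinates, which justifies the final sentence of the proposition.

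First, I would unwind the recursive construction of $S_\rcp$ to produce a positive parametrization $h: S_\rcp \xrightarrow{\sim} (\Gr_{1,5}^{>0})^k$: at each BCFW product step $\bcfw$ in $\rcp$, the five BCFW parameters $(\alpha_i,\beta_i,\gamma_i,\delta_i,\epsilon_i)$ (defined projectively) record a point in $\Gr_{1,5}^{>0}$, while the $\pre$, $\cyc$, $\refl$ steps merely relabel columns without introducing new positive parameters. Applying \cref{eq:canformpar} to this $h$, the candidate canonical form becomes the pullback under $\phi := h\circ \tilde Z^{-1}$ of the product canonical form $\bigwedge_{i=1}^k \Omega(\Gr_{1,5}^{>0})_i$.

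Second, I would identify $\phi$ explicitly with the coordinate functionary evaluation map. By \cite[Theorem~7.7]{even2023cluster}, the inverse amplituhedron map on $\gto{\rcp}$ is given by the twistor matrix $\twmt_\rcp(Y)$, whose entries are built from the coordinate functionaries $\{\calpha_i(Y),\cbeta_i(Y),\cgamma_i(Y),\cdelta_i(Y),\cepsilon_i(Y)\}_{i=1}^k$. Tracing through the recursive definitions in \cref{def:generalcluster} and \cite[Definition~7.1]{even2023cluster}, the coordinate functionaries are engineered precisely so that evaluating the $i$-th quintuple at $Y$ returns (up to projective scaling) the BCFW parameters of the $i$-th step of $h(\tilde Z^{-1}(Y))$; concretely, $\phi(Y)=\bigl([\calpha_i(Y):\cbeta_i(Y):\cgamma_i(Y):\cdelta_i(Y):\cepsilon_i(Y)]\bigr)_{i=1}^k$. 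Pulling back each factor of $\Omega(\Gr_{1,5}^{>0})$ under $\phi$ then immediately yields \cref{eq:canform_coordsfunc}, and the symmetry of $\Omega(\Gr_{1,5}^{>0})$ under relabeling of homogeneous coordinates gives the last sentence of the proposition.

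The main obstacle is the identification $\phi(Y)=\bigl([\calpha_i(Y):\cdots:\cepsilon_i(Y)]\bigr)_{i=1}^k$, which must be checked compatibly with the recursion on $\rcp$. For a terminal $\bcfw$ step, the new $\Gr_{1,5}^{>0}$ factor is parametrized by $(\calpha_k,\cbeta_k,\cgamma_k,\cdelta_k,\cepsilon_k)$, which by definition equal the twistor coordinates $\llrr{bcdn}, \llrr{acdn},\ldots, \llrr{abcd}$ and so are exactly the BCFW parameters assigned by $h$ to the final step. For chords $i<k$, the coordinate functionaries are defined via (signed, rescaled) product promotion of the corresponding functionaries of $\rcp_L$ or $\rcp_R$; one must check that the rescaling by the common factors from $\mathcal{T}'$ cancels in each ratio $\czeta_i/\calpha_i$, so that the projective classes $[\calpha_i:\cdots:\cepsilon_i]$ agree with the ones assigned by the inductive parametrization of $S_L$ or $S_R$. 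For the $\pre$, $\cyc$, $\refl$ steps, the coordinate functionaries are unchanged or pulled back by the corresponding diffeomorphism, matching the trivial action on BCFW parameters. Once this projective identification is in place, the computation $\phi^*\bigwedge_i\Omega(\Gr_{1,5}^{>0})_i = \mbox{RHS of \cref{eq:canform_coordsfunc}}$ is a direct pullback.
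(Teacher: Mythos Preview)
Your proposal is correct and follows essentially the same approach as the paper: both produce a positive parametrization $S_\rcp \xrightarrow{\sim} (\Gr_{1,5}^{>0})^k$ by BCFW parameters, identify $h\circ \tilde Z^{-1}$ with the coordinate-functionary map $Y\mapsto ([\calpha_i(Y):\cdots:\cepsilon_i(Y)])_{i=1}^k$, and pull back the product of $\Gr_{1,5}^{>0}$ canonical forms via \cref{eq:canformpar}. The only difference is packaging: the paper dispatches the parametrization and the identification $\phi(Y)=([\calpha_i(Y):\cdots:\cepsilon_i(Y)])_i$ by citing \cite[Proposition~6.22]{even2023cluster} and \cite[Theorem~7.7]{even2023cluster} directly, whereas you unroll these into an induction on the recipe; in particular your concern about promotion rescalings cancelling in the ratios $\czeta_i/\calpha_i$ is already absorbed into those cited results, so no separate check is needed.
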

\begin{proof}
 Given a BCFW tile $Z_\rcp$, the inverse of the amplituhedron map $\tilde{Z}^{-1}$ sends a point $Y$ in $\gto{\rcp}$ to a point in $\mbox{Gr}^{\geq 0}_{k,n}$ represented by the twistor matrix $\twmt_\rcp(Y)$ \cite[Definition 7.1]{even2023cluster}. Moreover, there is a positive parametrization of $S_\rcp$  in terms of BCFW parameters $\left([\calpha_i: \cbeta_i:\cgamma_i: \cdelta_i: \cepsilon_i]\right)_{i=1}^k$ in $(\mbox{Gr}^{>0}_{1,5})^{k}$ \cite[Proposition 6.22]{even2023cluster}, or equivalently in terms of e.g. $\left(\frac{\cbeta_i}{\calpha_i},\frac{\cgamma_i}{\calpha_i},\frac{\cdelta_i}{\calpha_i},\frac{\cepsilon_i}{\calpha_i}\right)_{i=1}^k$ in $\mathbb{R}^{4k}_+$. Composing this with $\tilde{Z}^{-1}$ gives a diffeomorphism $g:\gto{S} \rightarrow \mathbb{R}^{4k}_+$ that sends $Y \in \gto{S}$ to the (ratios of) coordinate functionaries  $\left(\frac{\beta_i(Y)}{\alpha_i(Y)},\frac{\gamma_i(Y)}{\alpha_i(Y)},\frac{\delta_i(Y)}{\alpha_i(Y)},\frac{\epsilon_i(Y)}{\alpha_i(Y)}\right)_{i=1}^k$. Hence can obtain the candidate canonical form of the tile $Z_S$ as:
\begin{equation*} 
  \tilde{\Omega}(Z_{S})=g^* \bigwedge_{i=1}^k \mbox{dlog} \frac{\beta_i}{\alpha_i} \wedge \mbox{dlog} \frac{\gamma_i}{\alpha_i} \wedge \mbox{dlog} \frac{\delta_i}{\alpha_i} \wedge \mbox{dlog} \frac{\epsilon_i}{\alpha_i}=\bigwedge_{i=1}^k \mbox{dlog} \frac{\beta_i(Y)}{\alpha_i(Y)} \wedge \mbox{dlog} \frac{\gamma_i(Y)}{\alpha_i(Y)} \wedge \mbox{dlog} \frac{\delta_i(Y)}{\alpha_i(Y)} \wedge \mbox{dlog} \frac{\epsilon_i(Y)}{\alpha_i(Y)}.
\end{equation*}   
\end{proof}

\begin{example}
    For the BCFW tile $S_{\rcp}$ in \cref{fig:bcfw_tile}, the coordinate functionaries $\{\zeta_i(Y)\}$ are in \cref{ex:coord_func}. Then we can compute the canonical form of $S_{\rcp}$ in terms of $\{\zeta_i(Y)\}$ by \cref{eq:canform_coordsfunc}. 
\end{example}

\begin{proposition}[Canonical form of tiles from tile variables and clusters]
Let $Z_D$ be a standard BCFW tile. Let  $\hhxx(D)=\{\tzeta_i(Y)\}_{i=1}^{4k}$ be its collection of tile variables and $\A(\check{\Sigma}_D)$ its associated cluster algebra as in \cref{def:tilevars}. Then the candidate canonical form $\tilde{\Omega}(Z_D)$ of $Z_D$ is given by:
\begin{equation}\label{eq:canform_tilevars}
 \tilde{\Omega}(Z_{D})=\bigwedge_{\tzeta_i(Y) \in \hhxx(D)} \mbox{dlog } \tzeta_i(Y).
\end{equation}
Moreover, for each fixed cluster $\hhxx=\{x_i\}_{i=1}^{4k}$ in $\A(\check{\Sigma}_D)$, the form $\tilde{\Omega}(Z_D)$ is given by:
\begin{equation}
 \tilde{\Omega}(Z_{D})=\bigwedge_{x_i \in \hhxx} \mbox{dlog } x_i(Y).   
\end{equation}
\end{proposition}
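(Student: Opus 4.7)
The plan is to first verify the identity $\tilde{\Omega}(Z_D) = \bigwedge_{\tzeta_i \in \hhxx(D)} \mbox{dlog}(\tzeta_i(Y))$ for the distinguished set of tile variables, and then upgrade to an arbitrary cluster of $\A(\check{\Sigma}_D)$ via mutation invariance of the cluster dlog volume form. Recall that canonical forms are only defined up to a global sign (footnote 10), so all equalities below are understood up to sign.

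For the first identity, I would start from
\[
\tilde{\Omega}(Z_D) \;=\; \bigwedge_{i=1}^k \mbox{dlog}\!\left(\tfrac{\beta_i(Y)}{\alpha_i(Y)}\right) \wedge \mbox{dlog}\!\left(\tfrac{\gamma_i(Y)}{\alpha_i(Y)}\right) \wedge \mbox{dlog}\!\left(\tfrac{\delta_i(Y)}{\alpha_i(Y)}\right) \wedge \mbox{dlog}\!\left(\tfrac{\epsilon_i(Y)}{\alpha_i(Y)}\right)
\]
given by the previous proposition. Using \cref{prop:coord-fcnaries-in-domino-var} (each coordinate functionary is a signed Laurent monomial in signed domino variables) together with \cref{def:tilevars} (tile variables are monomial rescalings of signed domino variables), both the $4k$ ratios of coordinate functionaries and the $4k$ tile variables can be written as Laurent monomials in the common set of signed domino variables, so their transition map is Laurent monomial. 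By \cref{prop:tile-biregular-pos-reals}, the tile variables define a bijection $\gto{D} \to \R_+^{4k}$; the analogous statement for the ratios of coordinate functionaries is built into \cref{thm:BCFW-tile-and-sign-description}. Since any Laurent monomial self-bijection of $\R_+^{4k}$ has unimodular exponent matrix, and dlog volume forms pull back by the determinant of this matrix, the two wedges agree up to sign.

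For the cluster invariance, I would argue by induction on mutation distance. For a single mutation $x \mapsto x'$ in $\A(\check{\Sigma}_D)$ with exchange relation $xx' = M + N$, where $M$ and $N$ are monomials in the remaining cluster variables $x_2, \ldots, x_{4k}$, one computes $\mbox{dlog}(x') = \mbox{dlog}(M+N) - \mbox{dlog}(x)$, so
\[
\mbox{dlog}(x') \wedge \bigwedge_{j=2}^{4k} \mbox{dlog}(x_j) \;=\; \mbox{dlog}(M+N) \wedge \bigwedge_{j=2}^{4k} \mbox{dlog}(x_j) \;-\; \bigwedge_{j=1}^{4k} \mbox{dlog}(x_j).
\]
The first summand vanishes because $\mbox{dlog}(M+N)$ lies in the span of $dx_2, \ldots, dx_{4k}$, so the $4k$-fold wedge of one-forms in this $(4k-1)$-dimensional cotangent subspace is zero. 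Thus the dlog wedge changes sign under each mutation, which combined with the first step yields the stated formula for any cluster of $\A(\check{\Sigma}_D)$.

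The main obstacle lies in the first step: the signed domino variables obey the redundancies of \cref{rmk:8_2}, and carefully tracking how each ratio of coordinate functionaries expands in tile variables via the inductively defined rescaling $m$ of \cref{lem:good-scaling} is delicate. A cleaner alternative would be to invoke the parametrization-independence of canonical forms of positive geometries: since \cref{prop:tile-biregular-pos-reals} shows the tile variables furnish a positive global coordinate system on $\gto{D}$, the candidate canonical form automatically pulls back to $\bigwedge \mbox{dlog}(y_i)$, sidestepping explicit computation with exponent matrices and reducing the argument to uniqueness of the dlog form on a positively parametrized positroid cell.
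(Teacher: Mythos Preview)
Your primary argument for the first identity has a gap. The claim that ``any Laurent monomial self-bijection of $\R_+^{4k}$ has unimodular exponent matrix'' is false: $x\mapsto x^2$ on $\R_+$ is a Laurent monomial bijection with exponent matrix $[2]$, and pulling back $d\log x$ through it picks up a factor of $2$. Relatedly, the inference ``both are Laurent monomials in the signed domino variables, so their transition map is Laurent monomial'' is not valid in general---two Laurent monomial maps into a larger ambient torus need not factor through one another. What you actually need is that the transition is Laurent monomial \emph{in both directions} (equivalently, a birational isomorphism of algebraic tori), which forces the exponent matrix into $GL_{4k}(\Z)$. This can in fact be verified here: the formulas of \cref{prop:coord-fcnaries-in-domino-var} are invertible (recursively, starting from $\repsilon_i=\pm\cepsilon_i$, each domino variable is a Laurent monomial in coordinate functionaries), and the degree statement in \cref{lem:good-scaling} makes each tile variable a degree-zero Laurent monomial in coordinate functionaries, hence a Laurent monomial in their ratios. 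But this justification is missing from your write-up.

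The ``cleaner alternative'' you sketch at the end is essentially the paper's route: the proof of \cref{prop:tile-biregular-pos-reals} exhibits the inverse of $f$ explicitly as $\tZ$ applied to a BCFW matrix $M_p$, so $f$ has the form $h\circ\tZ^{-1}$ required by \eqref{eq:canformpar} (with $h$ a reparametrization of the BCFW parametrization of $S_D$), and the paper simply appeals to this together with \cref{prop:birat-to-torus}. For the cluster-independence statement your mutation computation is correct and arguably more transparent than the paper's one-line ``the same argument holds for any cluster,'' which implicitly uses that every cluster of $\A(\check{\Sigma}_D)$ gives a positive chart on $\gto{D}$; your argument makes the invariance (up to sign) of $\bigwedge d\log x_i$ under mutation explicit.
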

The proof easily follows from \cref{prop:birat-to-torus}, and the fact there is a bijection $f:\gto{D} \rightarrow \mathbb{R}^{4k}_+$ that sends $Y 
\in \gto{D}$ to the collection $\hhxx(D)=\{\tzeta_i(Y)\}$ of $4k$ tile variables. Each tile variable $\tzeta_i(Y)$ is a signed ratio of cluster variables for $\mbox{Gr}_{4,n}$, in particular of domino variables $\xx(D)$, see \cref{prop:coord-fcnaries-in-domino-var}. The same argument holds if instead of $\hhxx(D)$, we consider an arbitrary cluster $\hhxx$ in $\A(\check{\Sigma}_D)$.

\begin{example}
    For the BCFW tile $Z_{D}$ in \cref{cd-example-again}, the tile variables $\tzeta_i(Y)$ were computed in \cref{domino-variables-formulas}. Then we can compute the candidate canonical form $\tilde{\Omega}(Z_{D})$ in terms of $\tzeta_i(Y)$ by \cref{eq:canform_tilevars}. Moreover, we can also compute $\tilde{\Omega}(Z_{D})$ by using a different cluster obtained e.g. by mutating the tile seed $\check{\Sigma}_D$ at $\tepsilon_5$, see \cref{fig:tile_seed}. The collection of cluster variables then would have $\tepsilon'_5$ instead of $\tepsilon_5$, where
    $$\tepsilon_5'= \frac{\lr{ABC \br 89 \br DEF}}{\sgamma_5 \sgamma_6}=  \frac{\lr{ABC \br 89 \br DEF}}{\lr{8~9~A~D} \lr{8~9~E~F}} $$
    and we use $A,B,C,D,E,F$ for $10,11,12,13,14,15$. Note that any sequence of mutations applied to $\check{\Sigma}_D$ will give cluster variables which are cluster variables for $\Gr_{4,n}$ times a Laurent monomial in the $\sgamma_i$.
\end{example}

\appendix
\newpage
\section{List of tiles in a spurion tiling}\label{appendix:spurion}
A good tiling $\mathcal{T}_{sp}=\{Z_{\pi}\}$ for $\mathcal{A}_{9,2,4}$ containing a spurion tile ($\# 28$ in the list).  For each tile $Z_{\pi}$ we display its affine permutation $\pi$ and the vector configuration of the columns of a matrix $C_{\pi}$ representing a point in $S_{\pi}$, where columns within the same bracket $(\ldots)$ are proportional.

\begin{spacing}{1.15}
\setlength{\columnsep}{-1cm}
\begin{multicols}{2}
\hspace{-5.2em}
\noindent\(\begin{array}{||c|c|c}
\hline
\# & \mbox{Affine Permutation} & \mbox{Vector Conf.} \\
 \hline
 1 & \{1,4,5,6,7,11,12,8,9\} & 
\begin{array}{c}
 (2)(3)(4)(5)(6)(7) \\
\end{array} \\
 2 & \{1,4,5,7,6,8,12,11,9\} &  
\begin{array}{c}
 (8,2)(3)(4)(5,6)(7) \\
\end{array}
  \\
 3 & \{1,4,5,7,8,6,11,12,9\} & 
\begin{array}{c}
 (2)(3)(4)(5)(7)(8) \\
\end{array}
 \\
 4 & \{1,4,5,8,6,7,12,9,11\} & 
\begin{array}{c}
 (8,9,2)(3)(4)(5,6,7) \\
\end{array}
  \\
 5 & \{1,4,5,8,7,6,9,12,11\} & 
\begin{array}{c}
 (9,2)(3)(4)(5,7)(8) \\
\end{array}
  \\
 6 & \{1,4,5,8,9,6,7,11,12\} & 
\begin{array}{c}
 (2)(3)(4)(5)(8)(9) \\
\end{array}
  \\
 7 & \{1,5,3,6,7,8,11,13,9\} & 
\begin{array}{c}
 (2)(4)(5)(6)(7)(8) \\
\end{array}
  \\
 8 & \{1,5,3,6,8,7,9,13,11\} & 
\begin{array}{c}
 (9,2)(4)(5)(6,7)(8) \\
\end{array}
  \\
 9 & \{1,5,3,6,8,9,7,11,13\} & 
\begin{array}{c}
 (2)(4)(5)(6)(8)(9) \\
\end{array}
  \\
 10 & \{1,5,4,6,7,11,8,12,9\} & 
\begin{array}{c}
 (2)(3,4)(5)(6)(7,8) \\
\end{array}
  \\
 11 & \{1,5,4,6,8,11,7,9,12\} & 
\begin{array}{c}
 (2)(3,4)(5)(6)(8,9) \\
\end{array}
  \\
 12 & \{1,5,4,7,6,8,11,12,9\} & 
\begin{array}{c}
 (2)(3,4)(5,6)(7)(8) \\
\end{array}
  \\
 13 & \{1,5,4,8,6,7,9,12,11\} & 
\begin{array}{c}
 (3,4)(5,6,7)(8)(9,2) \\
\end{array}
  \\
 14 & \{1,5,4,8,6,9,7,11,12\} & 
\begin{array}{c}
 (2)(3,4)(5,6)(8)(9) \\
\end{array}
  \\
 15 & \{1,6,3,4,7,8,9,11,14\} & 
\begin{array}{c}
 (2)(5)(6)(7)(8)(9) \\
\end{array}
  \\
 16 & \{1,6,4,5,7,8,12,9,11\} & 
\begin{array}{c}
 (3,4,5)(6)(7)(8,9,2) \\
\end{array}
  \\
 17 & \{1,6,4,5,8,7,9,12,11\} & 
\begin{array}{c}
 (3,4,5)(6,7)(8)(9,2) \\
\end{array}
  \\
 18 & \{1,6,5,4,8,7,9,11,12\} & 
\begin{array}{c}
 (2)(3,5)(6,7)(8)(9) \\
\end{array}
  \\
 19 & \{2,3,5,9,6,7,8,13,10\} & 
\begin{array}{c}
 (9,1,2,3)(4)(5,6,7,8) \\
\end{array}
  \\
 20 & \{2,4,5,9,6,8,7,12,10\} & 
\begin{array}{c}
 (9,1,2)(3)(4)(5,6,8) \\
\end{array}
  \\
 21 & \{2,4,5,9,8,6,7,10,12\} & 
\begin{array}{c}
 (1,2)(3)(4)(5,8)(9) \\
\end{array}
  \\
 22 & \{2,4,6,9,5,7,8,12,10\} & 
\begin{array}{c}
 (9,1,2)(3)(4)(6,7,8) \\
\end{array}
  \\
 23 & \{2,5,3,6,9,7,8,13,10\} & 
\begin{array}{c}
 (9,1,2)(4)(5)(6,7,8) \\
\end{array}
  \\
 24 & \{2,5,3,6,9,8,7,10,13\} & 
\begin{array}{c}
 (1,2)(4)(5)(6,8)(9) \\
\end{array}
  \\
 25 & \{2,5,4,6,9,10,7,8,12\} & 
\begin{array}{c}
 (1,2)(3,4)(5)(6)(9) \\
\end{array} \\
\hline
\end{array}\)

\noindent\(\begin{array}{||c|c|c||}
\hline
\# & \mbox{Affine Permutation} & \mbox{Vector Conf.} \\
\hline
 26 & \{2,5,4,9,6,8,7,10,12\} & 
\begin{array}{c}
 (1,2)(3,4)(5,6,8)(9) \\
\end{array}
  \\
 27 & \{2,6,4,5,7,9,12,8,10\} & 
\begin{array}{c}
 (3,4,5)(6)(7)(9,1,2) \\
\end{array}
  \\
 28 & \{2,6,4,5,9,7,8,12,10\} & 
\begin{array}{c}
 (3,4,5)(6,7,8)(9,1,2) \\
\end{array}
  \\
 29 & \{2,7,4,5,6,8,12,9,10\} & 
\begin{array}{c}
 (3,4,5,6)(7)(8,9,1,2) \\
\end{array}
  \\
 30 & \{2,7,4,5,8,6,9,12,10\} & 
\begin{array}{c}
 (3,4,5)(7)(8)(9,1,2) \\
\end{array}
  \\
 31 & \{3,4,5,9,10,6,7,8,11\} & 
\begin{array}{c}
 (1)(2)(3)(4)(5)(9) \\
\end{array}
  \\
 32 & \{3,5,4,6,10,9,7,8,11\} & 
\begin{array}{c}
 (1)(2)(3,4)(5)(6,9) \\
\end{array}
  \\
 33 & \{3,5,6,4,9,10,7,8,11\} & 
\begin{array}{c}
 (1)(2)(3)(5)(6)(9) \\
\end{array}
  \\
 34 & \{3,6,4,5,9,8,7,11,10\} & 
\begin{array}{c}
 (9,1)(2)(3,4,5)(6,8) \\
\end{array}
  \\
 35 & \{3,6,4,5,10,7,8,11,9\} & 
\begin{array}{c}
 (1)(2)(3,4,5)(6,7,8) \\
\end{array}
  \\
 36 & \{3,6,4,9,5,8,7,10,11\} & 
\begin{array}{c}
 (1)(2)(3,4)(6,8)(9) \\
\end{array}
  \\
 37 & \{3,6,5,4,7,8,11,10,9\} & 
\begin{array}{c}
 (8,1)(2)(3,5)(6)(7) \\
\end{array}
  \\
 38 & \{3,6,5,4,8,7,11,9,10\} & 
\begin{array}{c}
 (8,9,1)(2)(3,5)(6,7) \\
\end{array}
  \\
 39 & \{3,6,5,4,9,7,8,11,10\} & 
\begin{array}{c}
 (9,1)(2)(3,5)(6,7,8) \\
\end{array}
  \\
 40 & \{3,7,5,4,8,6,9,11,10\} & 
\begin{array}{c}
 (9,1)(2)(3,5)(7)(8) \\
\end{array}
  \\
 41 & \{4,6,3,5,9,8,7,10,11\} & 
\begin{array}{c}
 (1)(2)(4,5)(6,8)(9) \\
\end{array}
  \\
 42 & \{5,6,3,4,7,9,8,11,10\} & 
\begin{array}{c}
 (9,1)(2)(5)(6)(7,8) \\
\end{array}
  \\
 43 & \{5,6,3,4,8,9,7,10,11\} & 
\begin{array}{c}
 (1)(2)(5)(6)(8)(9) \\
\end{array}
  \\
 44 & \{5,7,3,4,6,8,9,11,10\} & 
\begin{array}{c}
 (9,1)(2)(5,6)(7)(8) \\
\end{array}
  \\
 45 & \{6,3,4,5,9,7,8,11,10\} & 
\begin{array}{c}
 (2,3,4,5)(6,7,8)(9,1) \\
\end{array}
  \\
 46 & \{6,3,4,9,5,7,8,10,11\} & 
\begin{array}{c}
 (1)(2,3,4)(6,7,8)(9) \\
\end{array}
  \\
 47 & \{6,3,5,4,7,8,11,9,10\} & 
\begin{array}{c}
 (2,3,5)(6)(7)(8,9,1) \\
\end{array}
  \\
 48 & \{6,4,3,5,9,7,8,10,11\} & 
\begin{array}{c}
 (1)(2,4,5)(6,7,8)(9) \\
\end{array}
  \\
 49 & \{6,5,3,4,7,9,8,10,11\} & 
\begin{array}{c}
 (1)(2,5)(6)(7,8)(9) \\
\end{array}
  \\
 50 & \{6,7,3,4,5,8,9,10,11\} & 
\begin{array}{c}
 (1)(2)(6)(7)(8)(9) \\
\end{array}
  \\
  \hline
\end{array}\)
\end{multicols}
\end{spacing}

Substituting the $5$ tiles $\# 28,34,35,38,46$ in $\mathcal{T}_{sp}$ with $5$ tiles in the table below, we obtain a good BCFW tiling $\mathcal{T}_{BCFW}$.
\vspace{0.5em}
\begin{spacing}{1.15}
\noindent\(\begin{array}{||c|c||}
\hline
 \mbox{Affine Permutation} & \mbox{Vector Conf.} \\
 \hline
  \{1,6,4,5,9,7,8,11,12\} & 
\begin{array}{c}
 (2)(3,4,5)(6,7,8)(9)\\
\end{array} \\
 \{3,6,4,5,9,7,11,8,10\} &  
\begin{array}{c}
 (9,1)(2)(3,4,5)(6,7) \\
\end{array}
  \\
  \{3,6,4,9,5,7,8,11,10\} & 
\begin{array}{c}
 (9,1)(2)(3,4)(6,7,8) \\
\end{array}
 \\
  \{3,7,4,5,9,6,8,11,10\} & 
\begin{array}{c}
 (9,1)(2)(3,4,5)(7,8) \\
\end{array}
  \\
 \{4,6,3,5,9,7,8,11,10\} & 
\begin{array}{c}
 (9,1)(2)(4,5)(6,7,8) \\
\end{array}
\\
\hline
\end{array}\)
\end{spacing}

\bibliographystyle{alpha}
	\bibliography{ClusterTilesPromotion}
\end{document}